\documentclass[12pt, leqno]{amsart}

\usepackage{esint} 
\usepackage{amsmath}
\usepackage{amssymb}
\usepackage{amsfonts}
\usepackage{enumerate}
\usepackage{enumitem}
\usepackage{mathrsfs}

\usepackage{mathtools}
\usepackage{xcolor}

\usepackage{float}
\usepackage{tikz}
\usetikzlibrary{lindenmayersystems}
\usepackage{pgfplots}

\usepackage{comment}

\usepackage{amsmath}
\usepackage{amssymb}
\usepackage{amsthm} 
\usepackage{epsf}
\usepackage{graphicx}
\usepackage{esint} 
\usepackage{dsfont}
\usepackage{hyperref} 
\hypersetup{backref,  pdfpagemode=FullScreen,  colorlinks=true} 
\usepackage[french,english]{babel}
\pgfplotsset{compat=1.18}

\numberwithin{equation}{section}
\newtheorem{thm}[equation]{Theorem} 
\newtheorem{lem}[equation]{Lemma}
\newtheorem{cor}[equation]{Corollary}

\theoremstyle{definition}
\newtheorem{defn}[equation]{Definition}

\newtheorem{rmk}[equation]{Remark}

\numberwithin{equation}{section}

\usepackage{cleveref}
\crefname{equation}{}{}
\crefname{lem}{}{}
\crefrangelabelformat{equation}{(#3#1#4)--(#5#2#6)}

\newcommand\abs[2][empty]{\csname#1\endcsname \lvert{#2}\csname#1\endcsname\rvert}
\newcommand\doublebar[2][empty]{\csname#1\endcsname \lVert{#2}\csname#1\endcsname\rVert}

\DeclareMathOperator*{\osc}{osc}

\newcommand{\B}{\mathbf{B}}
\newcommand\dist{\mathop{\mathrm{dist}}\nolimits}

\newcommand\Div{\mathop{\mathrm{div}}\nolimits}
\newcommand\Tr{\mathop{\mathrm{Tr}}\nolimits}

\newcommand\supp{\mathop{\mathrm{supp}}\nolimits}
\newcommand\diam{\mathop{\mathrm{diam}}\nolimits}

\newcommand{\dimH}{\dim_{\mathrm H}}

\newcommand\R{\mathbb{R}}

\newcommand\N{\mathbb{N}}

\makeatletter\def\HyPsd@CatcodeWarning#1{}\makeatother

\title[]{Elliptic theory and dimension drop for operators with divergence-free form-bounded drifts in one-sided chord-arc domains}

\author{Svitlana Mayboroda, Alberto Pacati}

\begin{document}
\begin{abstract}
We develop a boundary value and potential theory for elliptic operators
\[
L=-\Div(A\nabla)+\B\cdot\nabla
\]
with form-bounded drift in one-sided chord-arc domains. Assuming that the positive part of $\Div\B$ has sufficiently small form bound, we establish local regularity estimates and solvability of the Dirichlet problem, and construct the associated harmonic measure. Under the additional assumption $\Div\B\le0$, we construct a Green's function and prove pointwise and gradient estimates. For the characteristic distributional identity with a Dirac mass at the pole to hold, we will additionally assume either $\Div\B=0$ or $\B\in L^{n,1}_{\textup{loc}}(\Omega)$. Moreover, in the divergence-free case, we obtain a local lower bound for the Green's function, CFMS estimates, doubling of harmonic measure, a comparison principle for nonnegative solutions with zero trace, and a change of pole formula. Finally, using AI, we construct, in the upper half-space, an operator of the form $-\Delta+\B\cdot\nabla$ with $\B$ form-bounded and divergence-free, whose harmonic measure exhibits dimension drop.
\end{abstract}
\keywords{elliptic operators with drift, form-bounded, divergence, H\"older continuity, Green's function, harmonic measure}
\subjclass{Primary 35J25, Secondary 31B25, 35J08}

\maketitle
\setcounter{tocdepth}{2}
\tableofcontents

\setcounter{tocdepth}{3}
\clearpage

\section{Introduction}
Our work is divided into two parts; in the first part, we study properties of elliptic equations with singular drifts in rough domains. In the second part, we show that such operators may exhibit dimension drop even in the upper half space.

Let $n\ge 3$, and let $\Omega\subset\R^n$ be a one-sided chord-arc domain with unbounded complement. We develop an elliptic theory for the uniformly elliptic operator with drift
\begin{equation}\label{eqn:operator}
L:=-\Div(A\nabla)+\mathbf{B}\cdot\nabla,
\end{equation}
where $A:\Omega\to\R^{n\times n}$ is a real uniformly elliptic matrix-valued function (see Definition~\ref{defn:ellip}), and $\B$ is a singular drift satisfying a homogeneous form-boundedness assumption, namely
\[
\B\in\mathbf{\dot F^2}_\mu(\Omega).
\]

\begin{defn}\label{defn:fb}
We say that a vector field $\mathbf{B}:\Omega\to\R^n$ is $L^2$-homogeneously form-bounded in $\Omega$ if $\mathbf{B}\in L^2_{\mathrm{loc}}(\Omega)$ and there exists a constant $0<\mu<\infty$ such that
\begin{equation}\label{eqn:fb}
\int_{\Omega} |\mathbf{B}|^2\varphi^2
\le \mu\|\nabla \varphi\|_2^2
\qquad
\forall\,\varphi\in C^\infty_c(\Omega).
\end{equation}
In this case, we write $\mathbf{B}\in \mathbf{\dot F^2}_\mu(\Omega)$, and call $\mu$ a homogeneous form bound for $\mathbf{B}$.
\end{defn}
In \cite{Hof26}, the author developed a robust elliptic theory for $L$ under the assumption that
\begin{equation}\label{eqn:small}
\sup_{X\in\Omega}|\B(X)|\dist(X,\partial\Omega)\le \epsilon
\end{equation}
for some sufficiently small $\epsilon>0$.

By the Hardy inequality in \cite{Anc86}, a drift satisfying~\eqref{eqn:small} also satisfies~\eqref{eqn:fb}, with a form bound controlled by $\epsilon^2$. In contrast with \cite{Hof26}, however, we impose no smallness assumption on the form bound $\mu$ of $\B$. Instead, we require a form-boundedness condition on the positive part of the divergence of $\B$, namely
\[
(\Div\B)_+\in\mathbf{\dot F^1}_\delta(\Omega).
\]

\begin{defn}\label{defn:wfb}
We say that a function $f:\Omega\to\R$ is weakly homogeneously form-bounded in $\Omega$ if $f\in L^1_{\mathrm{loc}}(\Omega)$ and there exists a constant $0<\delta<\infty$ such that
\begin{equation}\label{eqn:wfb}
\int_{\Omega} |f|\varphi^2
\le \delta\|\nabla \varphi\|_2^2
\qquad
\forall\,\varphi\in C^\infty_c(\Omega).
\end{equation}
In this case, we write $f\in \mathbf{\dot F^1}_\delta(\Omega)$, and call $\delta$ a weak homogeneous form bound for $f$.
\end{defn}
\begin{rmk}
Notice that the form bound $\mu$ of $\B$ and the form bound $\delta$ of $(\Div\B)_+$  have different homogeneities with respect to the amplitude of $\B$: under the scaling $\B\mapsto t\B$, they scale as $t^2\mu$ and $t\delta$, respectively. Thus, $\sqrt{\mu}$ and $\delta$ have the same homogeneity.
\end{rmk}

There is an extensive literature on local estimates for solutions of
\[
-\Div(A\nabla u)+\B\cdot\nabla u=0
\]
under assumptions on the drift $\B$ related to those considered here. In \cite{LisZ04}, the authors proved a Gaussian upper bound for the heat kernel associated with the parabolic counterpart of~\eqref{eqn:operator} in the case $A=I$, assuming
\[
\B\in \mathbf{\dot F^2}_\mu(\R^n),
\qquad
\Div\B\in \mathbf{\dot F^1}_\delta(\R^n),
\]
with $\delta<2$, together with an additional smallness condition on the global Kato norm of $(\Div\B)_+$. Under the stronger assumption $\Div\B=0$, they also obtained a Gaussian lower bound for the heat kernel and H\"older continuity of solutions to $Lu=0$.

The case of merely measurable, uniformly elliptic coefficient matrices $A$ has also been studied extensively. Several assumptions on the drift are known to imply elliptic or parabolic Harnack inequalities for nonnegative solutions. We mention a few representative results. In \cite{NazU12}, the drift $\B$ is assumed to belong to a suitable Morrey space, or to an appropriate $L^q$ space, and to satisfy $\Div\B\le0$. In \cite{SerSSZ12}, the authors consider drifts $\B\in BMO^{-1}$, namely divergence-free vector fields given by the divergence of a skew-symmetric matrix with $BMO$ entries. In \cite{Har15}, several assumptions on $\B$ are treated, including
\[
\B\in \mathbf{\dot F^2}_\mu(\R^n),
\qquad
\Div\B\le0,
\]
as well as the condition
\[
\B\in\mathbf{\dot F^2}_\delta(\R^n),
\]
with $\delta$ sufficiently small relative to the ellipticity constant of $A$. In \cite{KinS22}, the authors proved two-sided Gaussian bounds for the heat kernel of the parabolic operator associated with $L$ under the assumptions
\[
\B\in \mathbf{\dot F^2}_\mu(\R^n),
\qquad
|\Div\B|\in \mathbf{\dot F^1}_\delta(\R^n),
\]
with $\delta$ sufficiently small, together with a smallness condition on $\Div\B$ in a suitable Kato class. Moreover, the spaces $\mathbf{\dot F^2}_\mu(\R^n)$ and $\mathbf{\dot F^1}_\delta(\R^n)$ may be replaced by more general classes of multiplicatively form-bounded functions. Related assumptions on $\B$ and $\Div\B$, in a weaker form, are also used in \cite{KinV} to establish a Harnack inequality for nonnegative solutions of $Lu=0$. In \cite{Sak21}, the author constructs a Green's function for elliptic operators with lower-order terms in Lorentz spaces in general domains. In the special case of operators of the form~\eqref{eqn:operator}, they prove a global Green's function upper bound for $\B\in L^{n,1}(\Omega)$, without imposing any condition on $\Div\B$. In~\cite{Hof26}, the author proved local upper and lower bounds for the Green's function under the smallness assumption~\eqref{eqn:small}. 

In the first part of the present work, we assume that
\[
\B\in \mathbf{\dot F^2}_\mu(\Omega)
\qquad\text{and}\qquad
(\Div\B)_+\in\mathbf{\dot F^1}_\delta(\Omega),
\]
for some $0<\mu<\infty$ and $0<\delta<\lambda$, where $\lambda$ is the ellipticity constant of $A$. Under these assumptions, we develop the local regularity theory for solutions of $Lu=0$, including local boundedness and weak Harnack estimates, and prove H\"older continuity up to the boundary for local solutions with continuous boundary data. We also establish solvability of the continuous Dirichlet problem and construct the associated harmonic measure.

Under the additional assumption $\Div\B\le0$, we construct a Green function and prove a global upper bound, up to a multiplicative constant, by the fundamental solution of the Laplacian in $\R^n$, together with the corresponding gradient estimates. For the Green's function that we construct to satisfy the corresponding distributional identity with a Dirac mass at the pole, we will need to assume either $\Div\B=0$ or $\B\in L^{n,1}_{\textup{loc}}(\Omega)$. This is because of the lack of integrability of the Green's function near its pole. 
 Finally, under the assumption $\Div\B=0$, we prove a local lower bound for the Green function, CFMS-type estimates relating the Green function and harmonic measure (see \cite{CafFMS81}), doubling of harmonic measure, a boundary Harnack principle, and a change-of-pole formula for harmonic measure. Our proof of the change-of-pole formula does not rely on applying the boundary Harnack principle to the adjoint Green function. In particular, it establishes the formula also under the smallness assumption~\eqref{eqn:small}, without imposing any condition on $\Div\B$. We point out that the authors of~\cite{Hof26} explicitly left open whether the change-of-pole formula holds in this setting, even under a smallness assumption.

To the best of our knowledge, this is the first work in which form-boundedness assumptions of this type on $\B$ and $\Div\B$ are used to develop a boundary value and potential theory in a rough domain. In particular, our results appear to be new even for Lipschitz domains. Our approach builds on classical techniques developed for uniformly elliptic operators
\[
L_0=-\Div(A\nabla)
\]
on rough domains, but the presence of the singular drift requires a number of nontrivial modifications. We exploit the form-boundedness assumptions throughout and adapt ideas from the works on singular and form-bounded drifts discussed above in order to recover the local regularity, Green function, and harmonic measure estimates needed for the boundary theory.

In the second part of our work, we investigate the dimension of the harmonic measure for operators of the form~\eqref{eqn:operator}.
In \cite{Hof26}, extending his previous joint work with
Lewis \cite{HofL01} on Lipschitz domains (see also
\cite{KenP01}), Hofmann proved an extrapolation result
for the harmonic measure of~\eqref{eqn:operator},
without requiring the smallness assumption~\eqref{eqn:small}.
Instead, the drift $\B$ is assumed to satisfy the
Carleson measure condition
\begin{equation}\label{eqn:carl:cond}
\sup_{\substack{x\in\partial\Omega\\
0<r\le\diam(\Omega)}}
r^{-(n-1)}
\int_{B(x,r)\cap\Omega}
|\B_*(X)|^2\dist(X,\partial\Omega)\,dX
\le C<\infty,
\end{equation}
where $\Omega\subset\R^n$ and
$\B_*(X):=\|\B\|_{L^\infty(
B(X,\dist(X,\partial\Omega)/4))}$.
No smallness assumption is imposed on $C$.

More precisely, under~\eqref{eqn:carl:cond},
the following implication holds:
\begin{equation}\label{eqn:extrapolation}
\omega_{-\Div(A\nabla)}
\in\textup{weak-}A_\infty(\sigma)
\implies
\omega_{-\Div(A\nabla)+\B\cdot\nabla}
\in\textup{weak-}A_\infty(\sigma),
\end{equation}
where $\sigma$ denotes the restriction of the
$(n-1)$-dimensional Hausdorff measure to $\partial\Omega$.
We recall that the property
$\omega\in\textup{weak-}A_\infty(\sigma)$
is equivalent to $\omega\ll\sigma$, with the
corresponding Poisson kernel satisfying a weak
reverse H\"older inequality. In turn, this is
equivalent to the solvability of the
$L^p$-Dirichlet problem for some $1<p<\infty$
(see, e.g., \cite{Hof19}).

The implication~\eqref{eqn:extrapolation} is
established by verifying a criterion originating
from \cite{BL04} (see \cite[Lemma 6.1]{Hof26}).
Importantly, this criterion yields both the
weak-$A_\infty$ property and Bourgain's
estimate~\eqref{eqn:bourgain} for $L$.

Moreover, assuming only that $\B$
satisfies~\eqref{eqn:small} for some $\epsilon>0$,
possibly large, and that Bourgain's estimate holds,
Hofmann establishes a local pointwise upper bound
for the Green's function, the CFMS estimates,
and the doubling property of the harmonic measure.
Consequently, these estimates also hold under
the assumptions~\eqref{eqn:carl:cond} and
$\omega_{-\Div(A\nabla)}
\in\textup{weak-}A_\infty(\sigma)$,
without any smallness requirement on the drift.

In the first part of our work, we use the divergence-free condition to overcome the possible largeness of the drift's form-bound, whereas \cite{Hof26} instead exploits the smallness condition~\eqref{eqn:small}. It is then interesting to investigate what we can say about the relationship between $\omega_L$ and $\sigma$ if we combine these two assumptions. More explicitly, it is natural to ask whether replacing the Carleson hypothesis~\eqref{eqn:carl:cond} with the assumptions
\begin{equation}\label{eqn:count:assumption}
\Div\B=0,\quad |\B|(X)\le\frac{\epsilon}{\dist(X,\partial\Omega)}, \quad\forall X\in\Omega,
\end{equation}
for some fixed small $\epsilon$, still yields~\eqref{eqn:extrapolation}. The answer turns out to be negative even in a flat space. Indeed, for any $\epsilon>0$, we present an example\footnote{The authors acknowledge that such an example was generated using AI, as stressed in our acknowledgment at the end of the introduction.} of a vector field $\B$ satisfying~\eqref{eqn:count:assumption} in $\Omega=\R^3_+$, for which the harmonic measure $\omega_{-\Delta+\B\cdot\nabla}$ is singular with respect to $\sigma$. Moreover, this harmonic measure exhibits a dimension drop, that is
\[
\dimH\omega_{-\Delta+\B\cdot\nabla}<2=\dimH\partial\R^3_+.
\]
\begin{rmk}\label{rmk:2dim}
Our construction is $2$-dimensional; indeed, $\B$ depends only on two variables. However, we prove the dimension drop in three dimensions because, in the first part of the paper, we chose to develop the elliptic theory for the operator $L$ in dimension $n\ge 3$. The theory needed to prove the dimension drop could also be developed in dimension $n=2$ in the full generality of one-sided chord-arc domains. We avoid doing so to prevent our work from becoming too long.  
\end{rmk}
For uniformly elliptic operators in divergence form $L_0=-\Div(A\nabla)$, the first examples of singular harmonic measures were given in \cite{ModM80} and \cite{CafFK81}. The construction in \cite{ModM80} shares some similarities with ours. In their two-dimensional example, the matrix $A$ is given by
\[
A(x,y)=\begin{pmatrix}
1    & 0          \\
0      & \alpha(x,y),    
\end{pmatrix}
\]
where the anisotropy $\alpha(x,y)$ is defined in the thin layers $\{k_{m+1}^{-1}\le y<k_m^{-1}\}$, $k_{m+1}\ge 2k_m$, as an interpolation between horizontal oscillations of small wavelengths $h_{m+1}^{-1}$ and $h_{m}^{-1}$, with $h_{m+1}\ge 4 h_m$. The amplitude of these oscillations approaches $0$ as $y$ approaches $0$, so that $\alpha$ is continuous on the closed upper half plane $\overline{\R^2_+}$. In our example, we don't define the vector field layer by layer. We still introduce fast horizontal oscillations, but we superpose them using a harmonic potential:
\begin{align*}
 &\Phi(x,y,z)
   =\epsilon\sum_{j=0}^{\infty}e^{-q^jz}\cos(q^jx),\quad z>0,\\
 &\B(x,y,z)=-\nabla\Phi(x,y,z),\quad z>0,
\end{align*}
where $q$ is chosen sufficiently large to ensure adequate scale separation in the $z$-direction. This choice allows us to make the drift divergence-free and, in each summand $e^{-q^jz}\cos(q^jx)$, to match the horizontal wavelength with the characteristic height scale.
In the construction in \cite{ModM80}, the horizontal oscillations generate a Riesz product in the harmonic measure, proving singularity but not a dimension drop. 
In our construction, instead, each mode of the vector field $-\B$ points horizontally toward the maxima of the corresponding cosine and away from its minima, while it points vertically toward the boundary near the maxima and away from the boundary near the minima. As a result, $-\B$ induces a preferred direction for the paths of the associated diffusion process, which translates into the prescription of a set with Hausdorff dimension smaller than $2$ where the harmonic measure concentrates.
Such a mechanism shares some similarities with the construction in~\cite{Wu96}. In that paper, the authors show an example of an operator $L$ in non-divergence form that exhibits a dimension drop in the upper half-space $\R^{n+1}_+:=\{(x,z)\in\R^{n+1}:z>0\}$. The operator has the form
\[
L=-\Delta_{x}- f(x,z)\partial_{zz},
\]
where $\Delta_x$ is the Laplacian in the first $n$ variables, and $f$ is a continuous positive function bounded from above and below. This construction resembles ours because $f$ is defined on dyadic vertical boxes partitioning thin layers approaching the boundary as rescaled copies of the same building block $f_0$. Here $f_0$ is a function defined on the vertical box $I=[0,1]^n\times[0,\gamma]$, chosen to be small on the central vertical core of $I$ and large near its lateral boundary. Consequently, the vertical diffusivity is much stronger near the lateral boundaries of the boxes than in their central regions. These regions act as traps for the associated diffusion process, biasing its successive exits toward the peripheral portions of the dyadic cubes. Iterating this mechanism across scales concentrates harmonic measure on a set of strictly smaller Hausdorff dimension, thereby producing the dimension drop.
Therefore, some ideas of our construction were already present in the literature, but the framework and the way in which such ideas were implemented are substantially different from ours. 

At the beginning of the introduction, we already noticed that vector fields $\B$ that satisfy $|\B|\dist(\cdot,\partial\Omega)\in L^\infty(\Omega)$ also satisfy the form-boundedness condition. We conclude the introduction by describing another class of functions satisfying the form-boundedness condition. We introduce a localized Chang--Wilson--Wolff class on $\Omega$, modeled on the global class introduced in \cite{ChaWW85}.

\begin{defn}
Let $\gamma:[0,\infty)\to[1,\infty)$ be an increasing function such that
\[
\int_{1}^\infty \frac{dt}{t\gamma(t)}<\infty.
\]
We say that a function $f:\Omega\to\R$ belongs to the Chang--Wilson--Wolff class $CWW(\Omega)$ if
\[
\sup_{2Q\subset\Omega}
\fint_Q
|f(X)|\,\ell(Q)^2
\gamma\bigl(|f(X)|\ell(Q)^2\bigr)\,dX
=:S<\infty,
\]
where the supremum is taken over all cubes $Q$ such that $2Q\subset\Omega$. Here $2Q$ denotes the cube concentric with $Q$ and having twice its side length, and $\ell(Q)$ denotes the side length of $Q$.
\end{defn}

If $\Omega$ is a one-sided chord-arc domain, then
\[
|\B|^2\in CWW(\Omega)
\quad\Longrightarrow\quad
\B\in \mathbf{\dot F^2}_\mu(\Omega)
\]
for some $\mu<\infty$.

This implication follows by localizing the argument of \cite{ChaWW85}. More precisely, for each cube $Q$ satisfying $2Q\subset\Omega$, the proof of \cite{ChaWW85} can be localized to yield the estimate
\[
\int_{2Q} |\B|^2\varphi_Q^2
\lesssim
\int_{2Q} |\nabla\varphi_Q|^2
\]
for test functions $\varphi_Q\in C_c^\infty(2Q)$, with a constant independent of $Q$. A general function $\varphi\in C_c^\infty(\Omega)$ can then be decomposed by means of a partition of unity subordinate to a Whitney decomposition of $\Omega$. Applying the localized estimate to each piece and summing, the error terms generated by the cutoff functions are controlled by the global Hardy inequality in \cite{Anc86}. This yields~\eqref{eqn:fb} for arbitrary $\varphi\in C_c^\infty(\Omega)$.

Examples of vector fields $\B$ for which $|\B|^2\in CWW(\Omega)$ include, under the appropriate choice of $\gamma$, vector fields satisfying conditions such as
\[
|\B|\in L^n(\Omega),
\qquad
|\B|\in L^{n,\infty}(\Omega),
\]
as well as vector fields in Campanato--Morrey classes of the form
\[
|\B|\in L^{2s}_{\mathrm{loc}}(\Omega)
\quad\text{for some }s>1,
\qquad
\left(\fint_Q|\B|^{2s}\right)^{1/(2s)}
\le C_s\ell(Q)^{-1}
\]
for every cube $Q$ such that $2Q\subset\Omega$.
\section*{Acknowledgments}

The authors thank Cole Jeznach for helpful conversations and ideas
concerning possible mechanisms by which a drift could cause harmonic
measure to fail to be absolutely continuous with respect to surface
measure.

Svitlana Mayboroda is partially supported by NSF grant DMS-1839077
(RAISE) and the Simons Initiative on Geometry of Flows.
Svitlana Mayboroda and Alberto Pacati are both partially supported
by the Simons Collaboration on Localization of Waves
(grant 563916, SM).

\section*{Disclosure of AI assistance}

The construction of the drift $\B$ and the proof of
Theorem~\ref{thm:dim:drop} were generated by ChatGPT 6-Astra
through a series of exchanges with the authors. The authors
initially suggested a preliminary idea concerning the qualitative
behavior of a drift that might produce a dimension drop, and
subsequently requested clarifications and revisions of the argument.
The authors rewrote the resulting presentation for clarity and
readability and take full responsibility for the mathematical
content of the paper.

\section{Outline of the paper}
The paper is organized as follows. In Section~\ref{sec:3}, we introduce the notation and the geometric and functional analytic framework used throughout the paper. In particular, we recall the main properties of one-sided chord-arc domains, introduce the relevant Sobolev spaces, and discuss the form-bounded assumptions on the drift and on its divergence.

In Section~\ref{sec:4} we prove existence and uniqueness of solutions of $-\Div(A\nabla u)+\B\cdot\nabla u=0$.

We then develop, in Section~\ref{sec:5}, the local regularity theory for solutions of the equation. We prove local boundedness estimates, a weak Harnack inequality, boundary estimates for solutions with zero trace, and the corresponding H\"older continuity and Carleson estimates.

In Section~\ref{sec:6}, we establish the maximum principle, construct the harmonic measure, and prove a Bourgain-type non-degeneracy estimate.

In Section~\ref{sec:7}, we construct the Green's function and establish its main properties. These include pointwise upper bounds, gradient estimates, a local lower bound, representation formulas, and the symmetry relation between the Green's functions of $L$ and $L^*$.

In Section~\ref{sec:8}, we prove the CFMS estimates relating the Green's function and harmonic measure. As consequences, we obtain the doubling property of harmonic measure, a comparison principle for nonnegative solutions vanishing on a portion of the boundary, and the change of pole formula. We also observe that the latter argument applies to the small drift setting without requiring any assumption on $\Div\B$.

Finally, in Section~\ref{sec:9}, we construct an example, in $\R^3_+$, of a divergence-free vector field $\B$ satisfiying the smallness assumption~\eqref{eqn:small}, for which the harmonic measure of the operator $-\Delta+\B\cdot\nabla$ exhibits dimension drop.
\section{Notation and preliminaries}\label{sec:3}

Throughout the manuscript, we use the following notation.
\begin{itemize}
    \item For $X\in\R^n$ and $0<r<\infty$, we denote by $B(X,r)$ the Euclidean ball centered at $X$ with radius $r$.

    \item If $\Omega\subset\R^n$ is open, $x\in\partial\Omega$, and $r>0$, we set
    \[
    D(x,r):=B(x,r)\cap\Omega,
    \qquad
    \Delta(x,r):=B(x,r)\cap\partial\Omega.
    \]

    \item If $E\subset\R^n$ is a Borel set, we denote its Lebesgue measure by $|E|$.

    \item We use lowercase letters for points on $\partial\Omega$ and uppercase letters for points in $\Omega$.

    \item For $X\in\Omega$, we set
    \[
    \delta(X):=\dist(X,\partial\Omega).
    \]

    \item The value of a constant denoted by $C$ may change from line to line, but its dependence on the underlying parameters will remain unchanged.
\end{itemize}

We work throughout with real uniformly elliptic matrices.

\begin{defn}\label{defn:ellip}
Let $A:\Omega\to\R^{n\times n}$ be a measurable matrix-valued function. We say that $A$ is uniformly elliptic if there exists $\lambda>0$ such that
\begin{align}
\lambda|v|^2
&\le A(X)v\cdot v,
&&\forall\,X\in\Omega,\ \forall\,v\in\R^n,
\label{ellipticity}\\
|A(X)v\cdot w|
&\le \lambda^{-1}|v||w|,
&&\forall\,X\in\Omega,\ \forall\,v,w\in\R^n.
\label{ellipticity1}
\end{align}
We call $\lambda$ the ellipticity constant of $A$.
\end{defn}

We assume throughout that $\Omega$ is a one-sided chord-arc domain, in the sense of the following definition.

\begin{defn}\label{defn:ocd}
We say that $\Omega$ is a one-sided chord-arc domain if it satisfies the following three conditions.
\begin{enumerate}
    \item[\textup{(i)}] \textup{(Interior corkscrew condition)}
    There exists a constant $0<\varepsilon<1$ such that, for every $z\in\partial\Omega$ and every $0<r\le\diam(\Omega)$, there exists a point
    \[
    Z_r\in\Omega\cap B(z,r)
    \]
    satisfying
    \[
    \delta(Z_r)\ge\varepsilon r.
    \]
    Such a point is called a corkscrew point for $z$ at scale $r$.

    \item[\textup{(ii)}] \textup{(Interior Harnack chain condition)}
    There exists a constant $M>1$ such that, for every $0<r\le\diam(\Omega)$, every $\epsilon>0$, every integer $k\ge1$, and all $X,Y\in\Omega$ satisfying
    \[
    |X-Y|=r,
    \qquad
    \min\{\delta(X),\delta(Y)\}\ge\epsilon,
    \qquad
    2^k\epsilon\ge r,
    \]
    there exist points $X_1,\dots,X_{Mk}\in\Omega$ such that
    \[
    X_1=X,
    \qquad
    X_{Mk}=Y,
    \]
    and
    \[
    X_{i+1}\in B\bigl(X_i,\delta(X_i)/2\bigr),
    \qquad i<Mk.
    \]

    \item[\textup{(iii)}] \textup{(Ahlfors--David regularity)}
    There exists a constant $N\ge1$ such that, for every $z\in\partial\Omega$ and every $0<r\le\diam(\partial\Omega)$,
    \begin{equation}\label{eqn:ahlf}
    N^{-1}r^{n-1}
    \le \sigma(\Delta(z,r))
    \le Nr^{n-1},
    \end{equation}
    where $\sigma:=\mathcal H^{n-1}|_{\partial\Omega}$.
\end{enumerate}
\end{defn}

\begin{defn}
If $\Omega$ is a one-sided chord-arc domain, we refer to the constants $\varepsilon$, $M$, and $N$ in Definition~\ref{defn:ocd} as the geometric constants of $\Omega$.
\end{defn}

\begin{rmk}
As a consequence of the interior corkscrew condition, there exists a constant $C\ge1$, depending only on $n$ and on the geometric constants of $\Omega$, such that
\[
C^{-1}r^n
\le |D(z,r)|
\le Cr^n,
\qquad
z\in\partial\Omega,\quad
0<r<\diam(\Omega).
\]
We use this estimate repeatedly without further mention.
\end{rmk}

Throughout the manuscript, we restrict our attention to one-sided chord-arc domains $\Omega$ that are either bounded or unbounded with unbounded boundary. This is equivalent to requiring that $\Omega$ have unbounded complement, or, equivalently,
\[
\diam(\Omega)=\diam(\partial\Omega).
\]
Most of our local results can also be adapted to unbounded domains with bounded complement. Some global statements, however, may fail in that setting. For instance, in the exterior domain
\[
\Omega=\R^n\setminus B(0,1),
\]
the standard harmonic measure need not be a probability measure.

We next recall the construction of Carleson tents.

\begin{lem}\cite[Section 3]{HofM14}\label{lem:tent}
Let $\Omega\subset\R^n$ be a one-sided chord-arc domain with unbounded complement. There exists a constant $K>1$, depending only on $n$ and on the geometric constants of $\Omega$, such that the following holds.

For every $x\in\partial\Omega$ and every $0<r\le\diam(\Omega)$, there exists a one-sided chord-arc domain
\[
T(x,r)\subset\Omega,
\]
called a Carleson tent, such that
\[
D(x,r)
\subset
D(x,5r/4)
\subset
T(x,r)
\subset
D(x,Kr).
\]
The family may be chosen to be increasing in $r$, in the sense that
\[
r<R
\quad\Longrightarrow\quad
T(x,r)\subset T(x,R).
\]
Moreover, the geometric constants of the Carleson tents depend only on $n$ and on the geometric constants of $\Omega$.
\end{lem}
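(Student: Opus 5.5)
The plan is to follow the Whitney-region construction of \cite[Section 3]{HofM14}. Fix a Whitney decomposition $\mathcal W$ of $\Omega$ into closed dyadic cubes $I$ with $\diam(I)\approx\dist(I,\partial\Omega)$. Also fix a dyadic grid $\mathbb D$ on the Ahlfors--David regular set $\partial\Omega$, in the sense of Christ/David, with cubes $Q$ of size $\ell(Q)\approx 2^{-k}$. For each $Q$ we form the collection $\mathcal W_Q$ of Whitney cubes $I$ that satisfy $\ell(I)\approx\ell(Q)$ and $\dist(I,Q)\lesssim\ell(Q)$. We then enlarge $\mathcal W_Q$ by adding the Harnack chains, given by Definition~\ref{defn:ocd}(ii), that join a fixed corkscrew point of $Q$ (Definition~\ref{defn:ocd}(i)) to every cube of $\mathcal W_Q$. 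The resulting enlarged collection is still made of cubes with $\ell(I)\approx\ell(Q)$ at distance $\lesssim\ell(Q)$, and the union $U_Q$ of slightly fattened cubes $(1+\tau)I$ is path-connected. For $Q_0\in\mathbb D$ the tent is the interior of $\bigcup_{Q\subset Q_0}U_Q$. The theorem of \cite{HofM14} asserts that these sawtooth/tent regions are again one-sided chord-arc domains, with constants controlled by $n$ and the geometric constants; we invoke that result rather than reprove it.

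Next we define $T(x,r)$ and check the inclusions. Given $x\in\partial\Omega$ and $r$, we let $\mathcal Q(x,r)$ be the finite, uniformly bounded family of dyadic cubes $Q$ with $\ell(Q)\approx Cr$ that meet $\Delta(x,2r)$, and we set $T(x,r)$ to be the interior of the union of the tents over these $Q$, joined by connecting Harnack chains at scale $\approx r$. The chains are needed to make the union a single domain. Every point $X\in D(x,5r/4)$ lies in some Whitney cube $I$ with $\ell(I)\approx\delta(X)\le 5r/4$. A boundary point $\hat x$ nearest to $X$ lies in $\Delta(x,5r/2)$, so $I\in\mathcal W_Q$ for the dyadic cube $Q\ni\hat x$ with $\ell(Q)\approx\ell(I)$, and this $Q$ is contained in some cube of $\mathcal Q(x,r)$. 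This gives $D(x,5r/4)\subset T(x,r)$. The reverse inclusion $T(x,r)\subset D(x,Kr)$ is immediate because all cubes used have size and distance $\lesssim r$.

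Monotonicity comes from restricting to a discrete family. We take radii $r\in\{2^{-k}\}$ and define $T(x,r):=T(x,2^{-k})$ for $2^{-k-1}<r\le2^{-k}$; the inclusions survive at the cost of enlarging $K$. Replacing $T(x,2^{-k})$ by the union over $j\ge k$ fails, because an infinite union need not be chord-arc. Instead we arrange nestedness through the choice of $\mathcal Q$: each cube of $\mathcal Q(x,2^{-k-1})$ is contained in a cube of $\mathcal Q(x,2^{-k})$, which holds after enlarging the threshold $2r$ to a fixed larger multiple. The tents over nested dyadic cubes are then nested, and so are the chain connectors once they are chosen inside the larger tent.

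The main obstacle I expect is verifying that the finite union, together with its connectors, is still a one-sided chord-arc domain. This requires the Harnack chain condition across different tents and the Ahlfors regularity of the new boundary. My first attempt would be to realize $T(x,r)$ as a single sawtooth region $\Omega_{\mathcal F,Q^*}$ inside a bounded number of ancestor cubes $Q^*$. The sawtooth theorem of \cite{HofM14} then applies directly, with the cutoff family $\mathcal F$ chosen to truncate at scale $r$.
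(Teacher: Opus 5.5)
Your overall route is the construction the paper relies on when it cites \cite[Section 3]{HofM14}: Whitney regions $U_Q$, Carleson boxes $T_Q$, a bounded family $\mathcal Q(x,r)$ of same-generation cubes near $x$, and nestedness obtained by passing to dyadic ancestors. The inclusions and the monotonicity are essentially fine. There is one slip. When $\delta(X)$ is comparable to $r$, the nearest point $\hat x$ may lie in $\Delta(x,5r/2)\setminus\Delta(x,2r)$, and its ancestor at scale $\approx Cr$ need not meet $\Delta(x,2r)$. Fix this by letting $\mathcal Q(x,r)$ consist of the cubes meeting $\Delta(x,3r)$. Also, nestedness across generations needs no enlarged threshold, since the ancestor of a cube meeting $\Delta(x,2r)$ meets $\Delta(x,2R)$ for every $R>r$.

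The genuine gap is the step you flag yourself, and it is the real content of the lemma: that $T(x,r)$ is a one-sided chord-arc domain with uniform constants.
\begin{itemize}
\item The bounded regions for which \cite{HofM14} proves this, $T_Q$ and $\Omega_{\mathcal F,Q}$, are attached to a single root cube $Q$ and always contain $U_Q$. Your $T(x,r)$ is instead a union of several boxes plus external connectors.
\item Your repair, writing $T(x,r)$ as one sawtooth $\Omega_{\mathcal F,Q^*}$, fails in general, because the cubes of $\mathcal Q(x,r)$ need not have a common ancestor of size $\lesssim r$. If $x$ lies on the relative boundary of a dyadic cube $Q'$ with $\ell(Q')=2^Mr$ and $M$ large, then $\mathcal Q(x,r)$ contains cubes inside and outside $Q'$, so every common ancestor strictly contains $Q'$. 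For $\partial\Omega=\R^{n-1}$ with the standard grid and $x=0$, there is no common ancestor at all.
\item Since $\Omega_{\mathcal F,Q^*}\supset U_{Q^*}$, rooting at such an ancestor destroys $T(x,r)\subset D(x,Kr)$ for every fixed $K$. Using a bounded number of ancestors only brings back the union problem.
\end{itemize}

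What is missing is a direct verification. First, take the parameters in the definition of $\mathcal W^0_Q$ large enough that $U_Q\cap U_{Q'}\neq\emptyset$ whenever $Q,Q'$ have the same generation and $\dist(Q,Q')\lesssim\ell(Q)$. Then $T(x,r):=\operatorname{int}\bigcup_{Q\in\mathcal Q(x,r)}\overline{T_Q}$ is connected through its top regions without external connectors. This also removes your concern about where the connectors lie for monotonicity. Second, rerun the proofs in \cite{HofM14} of the corkscrew condition, the Harnack chain condition, and the Ahlfors regularity of the boundary, for the multi-rooted collection $\bigcup_{Q\in\mathcal Q(x,r)}\mathbb D_Q$. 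Those arguments are local, and the same overlap property handles points near the interface between two boxes at small scales. The only new feature is a bounded number of comparable, pairwise overlapping top regions. This verification must be carried out, or quoted from a treatment of such surface-ball tents; it does not follow from the single-root statement.
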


We now introduce the Sobolev spaces used throughout the paper.

\begin{defn}
Let $U\subset\R^n$ be open. We denote by $W^{1,2}(U)$ the standard Sobolev space
\[
W^{1,2}(U)
:=
\{u\in L^2(U):\nabla u\in L^2(U)\}.
\]
We also define
\[
Y^{1,2}(U)
:=
\{u\in L^{2^*}(U):\nabla u\in L^2(U)\},
\]
where
\[
2^*:=\frac{2n}{n-2}.
\]
We equip $Y^{1,2}(U)$ with the norm
\[
\|u\|_{Y^{1,2}(U)}
:=
\|u\|_{L^{2^*}(U)}
+
\|\nabla u\|_{L^2(U)}.
\]
We denote by $Y^{-1,2}(U)$ the dual space of $Y^{1,2}(U)$.
\end{defn}

We will also need to consider boundary traces of Sobolev functions.

\begin{defn}
Let $U\subset\R^n$ be open, let $u\in W^{1,2}(U)$, let $z\in\partial U$, and let $a\in\R$. We say that $u$ has trace $a$ at $z$ if
\[
\lim_{r\to0}
\fint_{B(z,r)\cap U}
|u(X)-a|\,dX
=0.
\]
In this case, we write
\[
\Tr u(z)=a.
\]
\end{defn}

We refer to \cite{DavFM23B} for the existence and properties of the trace operator $\Tr$ on the Sobolev spaces considered here.

For one-sided chord-arc domains, Sobolev functions with vanishing trace satisfy the following boundary Poincar\'e inequality.

\begin{lem}\cite[Corollary 7.9]{DavFM23B}\label{lem:poinc}
Let $\Omega\subset\R^n$ be a one-sided chord-arc domain with unbounded complement. Let $1<\tau\le 2$,  $x\in\partial\Omega$, and $0<r\le\diam(\Omega)$. If $u\in W^{1,2}(D(x,3r))$
and
$
\Tr u=0$ on $\Delta(x,\tau r)$,
then
\begin{equation}\label{eqn:poinc}
\int_{D(x,r)}u^2
\le
Cr^2
\int_{D(x,\tau r)}|\nabla u|^2,
\end{equation}
where $C$ depends only on $\tau$, $n$, and on the geometric constants of $\Omega$.
\end{lem}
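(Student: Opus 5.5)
The boundary Poincar\'e inequality \eqref{eqn:poinc} is quoted from \cite[Corollary 7.9]{DavFM23B}, so the proof reduces to checking that the hypotheses there are met. I would nonetheless organize a self-contained argument as follows.

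\textbf{Step 1: a capacity density condition for the complement.} The Ahlfors--David regularity \eqref{eqn:ahlf} gives, for every $y\in\Delta(x,\tau r)$ and $0<s\le (\tau-1)r$, a lower bound on the $(n-1)$-dimensional content of $\Delta(y,s)$. This content bound yields a lower bound on the $2$-capacity of $\partial\Omega\cap \overline{B(y,s)}$ relative to $B(y,2s)$, comparable to $s^{n-2}$. The reason is that a set of positive $\mathcal H^{n-1}$ content at scale $s$ has $2$-capacity comparable to that of a ball, since $n-1>n-2$; this follows by a Frostman measure argument.

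\textbf{Step 2: extension by zero.} Since $\Tr u=0$ on $\Delta(x,\tau r)$, the extension $\tilde u$ of $u$ by $0$ across $\Delta(x,\tau r)$ lies in $W^{1,2}$ near every point of $\Delta(x,\tau r)$. This is where the trace theory of \cite{DavFM23B} is used: their trace is defined via the averaged limits, and the one-sided chord-arc geometry ensures that vanishing trace means $u$ is a limit of functions vanishing near the boundary piece. As a result, $\tilde u$ vanishes on a set of capacity comparable to $r^{n-2}$ inside each ball $B(y,(\tau-1)r)$.

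\textbf{Step 3: covering.} Cover $D(x,r)$ by two kinds of balls.
\begin{itemize}
\item \emph{Boundary balls} $B(y,c(\tau-1)r)$ with $y\in\Delta(x,r)$. On each, Maz'ya's capacitary Poincar\'e inequality applies, since $\tilde u$ vanishes on a subset of capacity comparable to $r^{n-2}$. This gives $\int_B \tilde u^2\lesssim r^2\int_{2B}|\nabla \tilde u|^2$, and $2B\cap\Omega\subset D(x,\tau r)$.
\item \emph{Interior Whitney-type balls} of radius comparable to $r$, at distance at least $c(\tau-1)r$ from $\partial\Omega$.
\end{itemize}
For each interior ball, I would connect it to a boundary ball by a Harnack chain, using condition (ii), of length bounded in terms of $\tau$ and the geometric constants. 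Along the chain, the standard Poincar\'e inequality on overlapping balls transfers the $L^2$ bound, with all balls kept inside $D(x,\tau r)$.

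\textbf{Main obstacle.} The delicate point is keeping every ball of the argument inside $D(x,\tau r)$ while chaining from interior points back to the boundary portion $\Delta(x,\tau r)$. The corkscrew and Harnack chain paths might a priori exit $B(x,\tau r)$. I would handle this by working at scale $(\tau-1)r/K$ and using the Carleson tent $T(x,\cdot)$ of Lemma~\ref{lem:tent}, which is itself one-sided chord-arc with controlled constants. Chains built within tents of a small enough scale then stay in the allowed region, and the constant depends on $\tau$ through the number of balls in the covering and in each chain.
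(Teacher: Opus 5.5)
Citing \cite[Corollary 7.9]{DavFM23B}, as in your first sentence, is all the paper does; it gives no proof of its own. Your self-contained sketch, however, has a gap exactly where the substance lies, in Step~2. You claim that the zero extension $\tilde u$ belongs to $W^{1,2}$ near $\Delta(x,\tau r)$ with $\nabla\tilde u=\mathbf{1}_\Omega\nabla u$. Your only support is that ``vanishing trace means $u$ is a limit of functions vanishing near the boundary piece''. That is the density statement of Lemma~\ref{lem:dense}, which the paper derives \emph{from} \eqref{eqn:poinc}. Indeed, the cutoff proof of such density needs $s^{-2}\int_{B\cap\{\delta<s\}}u^2\lesssim\int_{B'\cap\{\delta<Cs\}}|\nabla u|^2$, which is the inequality you are trying to prove. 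So, as written, the argument is circular.

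The missing idea is to obtain Step~2 directly from the definition of the trace. Set $\chi_k:=\theta(k\delta)$ with $\theta=0$ on $[0,1]$ and $\theta=1$ on $[2,\infty)$. Testing against $\phi\chi_k$ shows it suffices to prove $k\int_{B(z,\rho)\cap\{1/k<\delta<2/k\}}|u|\to0$. To get this, telescope the averages of $u$ over corkscrew balls at scales $2^{-\ell}/k$ down to $\sigma$-a.e.\ $y$ with $\Tr u(y)=0$, using the corkscrew and Harnack chain conditions. Then integrate in $y$ and apply ADR and Cauchy--Schwarz; this bounds the quantity by $C(\rho^{n-1}/k)^{1/2}\|\nabla u\|_{L^2(D(z,C\rho))}$. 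Relatedly, $\R^n\setminus\Omega$ may be Lebesgue-null. So ``$\tilde u$ vanishes on a set of large capacity'' must mean that the Lebesgue averages of $\tilde u$ vanish wherever $\Tr u=0$, which is only a $\sigma$-full subset of $\Delta(x,\tau r)$. Step~1 must therefore be run with $\sigma$ restricted to that subset as the Frostman measure. This works because $\int_{\Delta(y,s)}|w-w'|^{2-n}\,d\sigma(w')\lesssim s$ for $w\in\Delta(y,s)$.

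Step~3 does not work as designed either, though it turns out to be unnecessary. A tent $T(y,s)$ with $Ks\le(\tau-1)r$ lies in $D(y,Ks)\subset\{\delta<(\tau-1)r\}$. Once $\tau-1<\varepsilon$, no such tent contains the corkscrew point $X_r\in D(x,r)$, which is exactly the kind of point you must chain. Meanwhile, Harnack chains of $\Omega$ at scale $r$ need not stay in $B(x,\tau r)$, so your remedy for the ``main obstacle'' fails.

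The obstacle is self-inflicted, however. Since $\partial\Omega\cap B(x,\tau r)=\Delta(x,\tau r)$, a correct Step~2 gives $\tilde u\in W^{1,2}(B(x,\tau r))$ with $\int_{B(x,\tau r)}|\nabla\tilde u|^2=\int_{D(x,\tau r)}|\nabla u|^2$. The quasicontinuous representative of $\tilde u$ vanishes on a $\sigma$-full part of $\Delta(x,\tau r/2)$, which has capacity $\gtrsim r^{n-2}$. A single application of Maz'ya's capacitary Poincar\'e inequality on the ball $B(x,\tau r)$ then gives \eqref{eqn:poinc} at once, with no chains. Even if you prefer chaining, no tents are needed: the segment from $X\in D(x,r)$ to $x$ stays in $B(x,r)$ and first leaves $\Omega$ at a point of $\Delta(x,r)$. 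Completed this way, your route is a genuinely different, capacity-based proof. It even yields a constant uniform in $\tau\in(1,2]$ and uses $u$ only on $D(x,\tau r)$. But all of its difficulty sits in Step~2, which is exactly the step you have not supplied.
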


We next define the homogeneous Sobolev space of functions with zero trace.

\begin{defn}
Let $\Omega\subset\R^n$ be a one-sided chord-arc domain with unbounded complement. We define
\[
Y^{1,2}_0(\Omega)
:=
\left\{
u\in L^1_{\mathrm{loc}}(\Omega):
\nabla u\in L^2(\Omega)
\text{ and }
\Tr u=0\text{ on }\partial\Omega
\right\},
\]
equipped with the norm
\[
\|u\|_{Y^{1,2}_0(\Omega)}
:=
\|\nabla u\|_{L^2(\Omega)}.
\]
We denote by $Y^{-1,2}_0(\Omega)$ the dual space of $Y^{1,2}_0(\Omega)$.
\end{defn}

Combining \cite[Theorem 3]{Wal91} with a cutoff argument and the boundary Poincar\'e inequality~\eqref{eqn:poinc}, one obtains the following density result; see also \cite[Lemma 9.18]{DavFM23B}.

\begin{lem}\label{lem:dense}
Let $\Omega\subset\R^n$ be a one-sided chord-arc domain with unbounded complement. Then
\[
\overline{C^\infty_c(\Omega)}^{\,Y^{1,2}_0(\Omega)}
=
Y^{1,2}_0(\Omega).
\]
In particular,
\[
Y^{1,2}_0(\Omega)
=
\left\{
u\in Y^{1,2}(\Omega):
\Tr u=0\text{ on }\partial\Omega
\right\},
\]
and the Sobolev inequality
\[
\|u\|_{L^{2^*}(\Omega)}
\le
C_n\|\nabla u\|_{L^2(\Omega)}
\]
holds for every $u\in Y^{1,2}_0(\Omega)$.
\end{lem}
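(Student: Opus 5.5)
The plan is to obtain the density statement first, and then read off the other two claims from it. Let $u\in Y^{1,2}_0(\Omega)$, so that $u\in L^1_{\mathrm{loc}}(\Omega)$, $\nabla u\in L^2(\Omega)$ and $\Tr u=0$ on $\partial\Omega$. The approximation will be done in three reductions.

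\emph{Truncation at infinity.} This step is only needed if $\Omega$ is unbounded. Fix a boundary point $x_0$ and cutoffs $\eta_R$ equal to $1$ on $B(x_0,R)$, supported in $B(x_0,2R)$, with $|\nabla\eta_R|\lesssim R^{-1}$. The error $\|u\nabla\eta_R\|_{L^2}$ is bounded by
\[
R^{-1}\|u\|_{L^2(D(x_0,2R))}.
\]
Since $\Tr u=0$ on $\Delta(x_0,4R)$, the boundary Poincar\'e inequality~\eqref{eqn:poinc} (Lemma~\ref{lem:poinc}, applied at scale $2R$) controls this by a multiple of $\|\nabla u\|_{L^2(D(x_0,4R))}$. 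That alone is bounded but does not tend to zero. To get decay, I will use annular cutoffs on $B(x_0,2R)\setminus B(x_0,R)$ and cover the annulus by boundary balls of radius comparable to $R$ together with Whitney-type interior regions. On each piece, Poincar\'e bounds the $L^2$ norm of $u$ by $R\|\nabla u\|_{L^2}$ over a slightly enlarged region. Hence the error is controlled by $\|\nabla u\|_{L^2(\Omega\setminus B(x_0,R/2))}$, which tends to $0$.

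This annulus argument is where I expect the main difficulty. Whether Poincar\'e can be applied to each piece relies on $\diam\partial\Omega=\infty$ (every far region is near the boundary) and on Harnack chains to connect interior pieces. If it fails, the fallback is a logarithmic cutoff of $\nabla u$ summed over dyadic annuli.

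\emph{Truncation near the boundary.} Let $\psi_\epsilon(X)=\theta(\delta(X)/\epsilon)$, where $\theta$ vanishes on $[0,1]$ and equals $1$ on $[2,\infty)$. The error term is bounded by
\[
\epsilon^{-1}\|u\|_{L^2(\{\epsilon<\delta<2\epsilon\}\cap B(x_0,2R))}.
\]
Cover this strip by boundary balls $B(x_i,C\epsilon)$ with bounded overlap, and apply~\eqref{eqn:poinc} on each. This gives a bound by $\|\nabla u\|_{L^2(\{\delta<C'\epsilon\}\cap B(x_0,3R))}$, which tends to $0$ by absolute continuity of the integral.

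\emph{Mollification.} The function $u\psi_\epsilon\eta_R$ has compact support in $\Omega$ and lies in $W^{1,2}$, so standard mollification gives an element of $C_c^\infty(\Omega)$ arbitrarily close in $Y^{1,2}_0$.

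\emph{Consequences.} For $\varphi\in C_c^\infty(\Omega)$, the Sobolev inequality $\|\varphi\|_{L^{2^*}}\le C_n\|\nabla\varphi\|_{L^2}$ is the usual one on $\R^n$. Given $u\in Y^{1,2}_0(\Omega)$, take approximants $\varphi_k\to u$ in $Y^{1,2}_0$. By the Sobolev inequality the $\varphi_k$ are Cauchy in $L^{2^*}$. Their $L^{2^*}$ limit coincides with $u$, because Poincar\'e identifies the local $L^2$ limit with $u$ rather than with $u$ plus a constant: the traces vanish. This yields the Sobolev inequality for $u$ and $u\in L^{2^*}(\Omega)$, which gives the inclusion $Y^{1,2}_0\subset\{u\in Y^{1,2}:\Tr u=0\}$. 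The reverse inclusion is immediate from the definitions.
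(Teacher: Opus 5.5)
Your argument is correct, and it differs from the paper's in how the approximation near $\partial\Omega$ is obtained. The paper only sketches the proof. It invokes \cite[Theorem 3]{Wal91}, a density theorem for Sobolev functions with vanishing trace, and combines it with a cutoff argument and the boundary Poincar\'e inequality~\eqref{eqn:poinc} to reach the homogeneous space $Y^{1,2}_0(\Omega)$ (see also \cite[Lemma 9.18]{DavFM23B}). You instead prove the near-boundary step by hand. You use the layer cutoff $\theta(\delta/\epsilon)$ and apply Lemma~\ref{lem:poinc} on a bounded-overlap cover of the strip $\{\epsilon<\delta<2\epsilon\}$ by boundary balls of radius comparable to $\epsilon$. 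This makes the lemma self-contained, and it shows that~\eqref{eqn:poinc} is the only place where the geometry of $\Omega$ enters. The paper's route is shorter but outsources exactly this step. Your identification of the $L^{2^*}$-limit with $u$, via Poincar\'e applied to $\varphi_k-u$, is also fine.

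On the truncation at infinity, commit to your fallback. Your primary annulus estimate is
\[
R^{-1}\|u\|_{L^2(\Omega\cap(B(x_0,2R)\setminus B(x_0,R)))}\lesssim\|\nabla u\|_{L^2(\Omega\setminus B(x_0,R/2))}.
\]
It needs chains of balls from interior points of the annulus to $\partial\Omega$ that avoid $B(x_0,R/2)$. The one-sided chord-arc axioms give Harnack chains but no control on avoiding a prescribed ball. Already in $\R^n_+$ with $x_0=0$, the nearest boundary point of $(0,\dots,0,R)$ is $x_0$ itself. So this step would require a separate geometric argument, and you do not need it. The logarithmic cutoff uses only Poincar\'e on the full sets $D(x_0,2^k)$, which is the estimate you already derived. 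Take $\eta=1$ on $B(x_0,R)$, $\eta=0$ off $B(x_0,R^2)$, and $\eta=2-\log|X-x_0|/\log R$ in between. Then
\[
\int_\Omega u^2|\nabla\eta|^2\le\frac{1}{(\log R)^2}\sum_{R/2\le 2^k\le R^2}2^{-2k}\int_{D(x_0,2^{k+1})}u^2\le\frac{C}{(\log R)^2}\sum_{R/2\le 2^k\le R^2}\int_{D(x_0,2^{k+2})}|\nabla u|^2\le\frac{C}{\log R}\|\nabla u\|_{L^2(\Omega)}^2,
\]
which tends to $0$. In addition, $\|(1-\eta)\nabla u\|_{L^2}\le\|\nabla u\|_{L^2(\Omega\setminus B(x_0,R))}\to0$. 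Alternatively, your uniform bound $\|\nabla(u\eta_R)\|_{L^2}\lesssim\|\nabla u\|_{L^2}$ gives $\nabla(u\eta_R)\rightharpoonup\nabla u$ weakly in $L^2$, and Mazur's lemma then yields strongly convergent convex combinations.
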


We conclude this section with a few remarks concerning the form-boundedness conditions~\eqref{eqn:fb} and~\eqref{eqn:wfb}.

\begin{rmk}\label{rmk:fb}
Assume that $\B$ satisfies~\eqref{eqn:fb} and that $f$ satisfies~\eqref{eqn:wfb}. Let $\varphi\in Y^{1,2}_0(\Omega)$. By Lemma~\ref{lem:dense}, there exists a sequence
$
\{\varphi_i\}_{i\in\N}\subset C^\infty_c(\Omega)
$
such that
$
\varphi_i\to\varphi$
in $Y^{1,2}_0(\Omega)
$
and, after passing to a subsequence, almost everywhere in $\Omega$. Therefore, by Fatou's lemma,
\[
\begin{aligned}
\int_{\Omega}|\B|^2\varphi^2
&\le
\liminf_{i\to\infty}
\int_{\Omega}|\B|^2\varphi_i^2\\
&\le
\liminf_{i\to\infty}
\mu\|\nabla\varphi_i\|_{L^2(\Omega)}^2
=
\mu\|\nabla\varphi\|_{L^2(\Omega)}^2.
\end{aligned}
\]
Similarly,
\[
\int_{\Omega}|f|\varphi^2
\le
\delta\|\nabla\varphi\|_{L^2(\Omega)}^2.
\]
Thus the integrals
\[
\int_{\Omega}|\B|^2\varphi^2
\qquad\text{and}\qquad
\int_{\Omega}|f|\varphi^2
\]
are finite for every $\varphi\in Y^{1,2}_0(\Omega)$, and the bounds~\eqref{eqn:fb} and~\eqref{eqn:wfb} extend to all such test functions.
\end{rmk}

Since the condition
\[
(\Div\B)_+\in\mathbf{\dot F^1}_\delta(\Omega)
\]
does not impose any regularity assumption on the negative part of $\Div\B$, we specify the weak meaning that we assign to $(\Div\B)_+$.

\begin{defn}\label{defn:divp}
Let $\B\in L^2_{\mathrm{loc}}(\Omega;\R^n)$. We say that
\[
(\Div\B)_+\in L^1_{\mathrm{loc}}(\Omega)
\]
if there exists a nonnegative function
\[
f\in L^1_{\mathrm{loc}}(\Omega)
\]
such that
\begin{equation}\label{eqn:divp}
-\int_{\Omega}\B\cdot\nabla\varphi
\le
\int_{\Omega}f\varphi
\end{equation}
for every nonnegative $\varphi\in C^\infty_c(\Omega)$. Whenever such a function exists, we fix one such $f$ and, with a slight abuse of notation, define
\[
(\Div\B)_+:=f.
\]
\end{defn}

\begin{rmk}
If $\Div\B$ is represented by a locally integrable function, then the condition in Definition~\ref{defn:divp} implies
\[
\max\{\Div\B,0\}\le(\Div\B)_+
\qquad\text{a.e. in }\Omega.
\]
Thus, in this case, our definition is consistent with the usual notion of the positive part, while allowing for the more general situation in which $\Div\B$ is only defined distributionally.
\end{rmk}

\begin{rmk}
Suppose that
$
\B\in\mathbf{\dot F^2}_\mu(\Omega)$
and$
(\Div\B)_+\in\mathbf{\dot F^1}_\delta(\Omega)$.
Then, by Lemma~\ref{lem:dense}, Remark~\ref{rmk:fb}, an approximation argument, and H\"older's inequality, we have
\begin{equation}\label{eqn:bpfb}
-\int_{\Omega}\B\cdot\nabla\varphi\,\psi
\le
\int_{\Omega}(\Div\B)_+\varphi\psi
+
\int_{\Omega}\B\cdot\nabla\psi\,\varphi,
\qquad
0\le\varphi,\psi\in Y^{1,2}_0(\Omega).
\end{equation}
\end{rmk}

\begin{rmk}\label{rmk:scaling}
The conditions
$
\B\in\mathbf{\dot F^2}_\mu(\Omega)
$ and
$(\Div\B)_+\in\mathbf{\dot F^1}_\delta(\Omega)
$
are invariant under the natural scaling of the equation. Consequently, when proving local De Giorgi--Nash--Moser estimates for subsolutions or supersolutions of
$
-\Div(A\nabla u)+\B\cdot\nabla u=0
$
in a ball of radius $r$, we may reduce to the case $r=1$ by scaling.
\end{rmk}

\section{Existence and uniqueness of solutions}\label{sec:4}

\begin{defn}
Let $A:\Omega\to \R^{n\times n}$ be uniformly elliptic, and let $\B$ be a vector field satisfying $\B\in \mathbf{\dot F^2}_\mu(\Omega)$ and $(\Div \B)_+\in \mathbf{\dot F}_{\delta}(\Omega)$, with $\mu>0$ and $0<\delta<\lambda$, where $\lambda$ is the ellipticity constant of $A$.

We define the continuous bilinear form $\alpha:Y^{1,2}(\Omega)\times Y^{1,2}_0(\Omega)\to \R$ by
\begin{equation}\label{eqn:bil}
\alpha(u,\varphi):= \int_{\Omega}A\nabla u\cdot\nabla\varphi+\int_{\Omega}\B\cdot\nabla u\,\varphi,\quad\forall\, u\in Y^{1,2}(\Omega),\,\varphi\in Y^{1,2}_0(\Omega).
\end{equation}

We also define the continuous bilinear form $\beta:Y^{1,2}(\Omega)\times Y^{1,2}_0(\Omega)\to \R$ by
\begin{equation}\label{eqn:beta}
\beta(u,\varphi):= \int_{\Omega}\B\cdot\nabla u\,\varphi,\quad\forall\, u\in Y^{1,2}(\Omega),\,\varphi\in Y^{1,2}_0(\Omega).
\end{equation}

We define the operator $L=-\Div(A\nabla)+\B\cdot\nabla$ as the bounded linear operator $L:Y^{1,2}(\Omega)\to Y^{-1,2}_0(\Omega)$ given by
\begin{equation}\label{def:L}
\langle Lu,\varphi\rangle:=\alpha(u,\varphi),\quad\forall\, u\in Y^{1,2}(\Omega),\,\varphi\in Y^{1,2}_0(\Omega).
\end{equation}
\end{defn}

\begin{rmk}\label{rmk:bil}
The continuity of the bilinear form $\alpha$ defined in~\eqref{eqn:bil} follows from H\"older's inequality, the uniform ellipticity of $A$, and the continuity of $\beta$. The latter follows from H\"older's inequality, the assumption $\B\in \mathbf{\dot F^2}_\mu(\Omega)$, and Remark~\ref{rmk:fb}. Indeed, for any $u\in Y^{1,2}(\Omega)$ and $\varphi\in Y^{1,2}_0(\Omega)$,
\begin{align*}
|\beta(u,\varphi)|
&\le \|\nabla u\|_{L^2(\Omega)}\||\B|\varphi\|_{L^2(\Omega)}\\
&\le \sqrt{\mu}\|\nabla u\|_{L^2(\Omega)}\|\nabla \varphi\|_{L^2(\Omega)}
\le \sqrt{\mu}\|u\|_{Y^{1,2}(\Omega)}\|\varphi\|_{Y^{1,2}_0(\Omega)}.
\end{align*}
\end{rmk}

\begin{defn}\label{def:sol}
Let $A:\Omega\to \R^{n\times n}$ be uniformly elliptic, and let $\B$ be a vector field satisfying $\B\in \mathbf{\dot F^2}_\mu(\Omega)$ and $(\Div \B)_+\in \mathbf{\dot F}_{\delta}(\Omega)$, with $\mu>0$ and $0<\delta<\lambda$, where $\lambda$ is the ellipticity constant of $A$. Let $L=-\Div(A\nabla)+\B\cdot\nabla$ be the operator defined by~\eqref{def:L}.

Let $u\in Y^{1,2}(\Omega)$, $f\in \textup{Lip}_c(\R^n)$, and $F\in Y^{-1,2}_0(\Omega)$. We say that $u$ is a solution to
\begin{equation}\label{eqn:pd}
\begin{cases}
L u = F & \text{in } \Omega, \\
u = f & \text{on } \partial\Omega,
\end{cases}
\end{equation}
if $\Tr u=f$ on $\partial\Omega$ and $Lu=F$ as elements of $Y^{-1,2}_0(\Omega)$. Equivalently,
\begin{equation}\label{eqn:sol:F}
\int_{\Omega}A\nabla u\cdot\nabla\varphi+\int_{\Omega}\B\cdot\nabla u\,\varphi=\langle F, \varphi \rangle,\quad\forall\,\varphi\in C^\infty_c(\Omega).
\end{equation}
If $F=0$, we simply say that $u$ is a solution.

We say that $u$ is a subsolution if
\begin{equation}\label{eqn:sub}
\int_{\Omega}A\nabla u\cdot\nabla\varphi+\int_{\Omega}\B\cdot\nabla u\,\varphi\le0,\quad\forall\,0\le\varphi\in C^\infty_c(\Omega),
\end{equation}
and we say that $u$ is a supersolution if $-u$ is a subsolution.
\end{defn}

\begin{rmk}\label{rmk:sol}
If $u\in W^{1,2}_{\textup{loc}}(\Omega)$ satisfies~\eqref{eqn:sol:F} for every $\varphi\in C^\infty_c(\Omega)$, we say that $u$ is a local weak solution to $Lu=F$.

If $u\in Y^{1,2}(\Omega)$ is a solution to $Lu=F$, then, by the continuity of the bilinear form $\alpha$ (see Remark~\ref{rmk:bil}) and the density of $C^\infty_c(\Omega)$ in $Y^{1,2}_0(\Omega)$, the identity~\eqref{eqn:sol:F} extends to every $\varphi\in Y^{1,2}_0(\Omega)$. Similarly, if $u$ is a subsolution, the class of admissible test functions in~\eqref{eqn:sub} can be extended to all nonnegative $\varphi\in Y^{1,2}_0(\Omega)$.
\end{rmk}

\begin{lem}\label{lem:existence}
Let $\Omega\subset\R^n$ be a one-sided chord-arc domain with unbounded complement. Let $A:\Omega\to \R^{n\times n}$ be uniformly elliptic, and let $\B$ be a vector field satisfying $\B\in \mathbf{\dot F^2}_\mu(\Omega)$ and $(\Div \B)_+\in \mathbf{\dot F}_{\delta}(\Omega)$, with $\mu>0$ and $0<\delta<\lambda$, where $\lambda$ is the ellipticity constant of $A$. Let $L=-\Div(A\nabla)+\B\cdot\nabla$ be the operator defined by~\eqref{def:L}, and let $F\in Y^{-1,2}_0(\Omega)$. Then there exists a unique $u\in Y^{1,2}_0(\Omega)$ satisfying
\begin{equation}\label{eqn:poisson}
L u = F \quad \text{in } \Omega
\end{equation}
in the sense of Definition~\ref{def:sol}. Moreover,
\begin{equation}\label{eqn:est:ex}
\|u\|_{Y^{1,2}(\Omega)}\le C\|F\|_{Y^{-1,2}_0(\Omega)},
\end{equation}
where $C$ depends only on $\lambda$, $n$, $\mu$, and $\delta$.

Let $f\in \textup{Lip}_c(\R^n)$. Then there exists a unique solution $u_f\in Y^{1,2}(\Omega)$ to the Dirichlet problem
\begin{equation}\label{eqn:dirichlet}
\begin{cases}
L u_f = 0 & \text{in } \Omega, \\
\Tr u_f = f & \text{on } \partial\Omega.
\end{cases}
\end{equation}
\end{lem}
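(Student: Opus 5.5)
The plan is to prove coercivity of $\alpha$ on $Y^{1,2}_0(\Omega)\times Y^{1,2}_0(\Omega)$ and then apply Lax--Milgram. The main point is to control the drift term $\beta(\varphi,\varphi)$ for $\varphi\in Y^{1,2}_0(\Omega)$.

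For $0\le\varphi\in C^\infty_c(\Omega)$ we have $\varphi\nabla\varphi=\tfrac12\nabla(\varphi^2)$. Since $\varphi^2$ is a nonnegative test function, Definition~\ref{defn:divp} gives
\[
\int_\Omega\B\cdot\nabla\varphi\,\varphi=\tfrac12\int_\Omega\B\cdot\nabla(\varphi^2)\ge-\tfrac12\int_\Omega(\Div\B)_+\varphi^2\ge-\tfrac{\delta}{2}\|\nabla\varphi\|_2^2 .
\]
For a general $\varphi$, I split $\varphi=\varphi_+-\varphi_-$ and write $\B\cdot\nabla\varphi\,\varphi=\B\cdot\nabla\varphi_+\,\varphi_++\B\cdot\nabla\varphi_-\,\varphi_-$ almost everywhere. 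This gives $\beta(\varphi,\varphi)\ge-\frac{\delta}{2}(\|\nabla\varphi_+\|_2^2+\|\nabla\varphi_-\|_2^2)=-\frac{\delta}{2}\|\nabla\varphi\|_2^2$. To justify this for $\varphi_\pm\in Y^{1,2}_0$ rather than smooth functions, I use \eqref{eqn:bpfb} with $\varphi=\psi=\varphi_\pm$, which gives $-\beta(\varphi_\pm,\varphi_\pm)\le\int(\Div\B)_+\varphi_\pm^2+\beta(\varphi_\pm,\varphi_\pm)$, and then apply Remark~\ref{rmk:fb}. Combined with ellipticity, this yields
\[
\alpha(\varphi,\varphi)\ge\bigl(\lambda-\tfrac{\delta}{2}\bigr)\|\nabla\varphi\|_2^2 ,
\]
which is positive because $\delta<\lambda$. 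Continuity of $\alpha$ on $Y^{1,2}_0\times Y^{1,2}_0$ holds by Remark~\ref{rmk:bil}, and $Y^{1,2}_0(\Omega)$ is a Hilbert space with norm $\|\nabla\cdot\|_2$ (completeness follows from the Sobolev inequality in Lemma~\ref{lem:dense} and the continuity of the trace). Lax--Milgram then gives a unique $u\in Y^{1,2}_0$ with $\alpha(u,\varphi)=\langle F,\varphi\rangle$. Testing with $\varphi=u$ gives $\|\nabla u\|_2\le(\lambda-\delta/2)^{-1}\|F\|$, and the Sobolev inequality controls $\|u\|_{L^{2^*}}$, which proves \eqref{eqn:est:ex}. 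Uniqueness within $Y^{1,2}_0$ is the same coercivity applied to the difference of two solutions.

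For the Dirichlet problem with $f\in\mathrm{Lip}_c(\R^n)$, the restriction $f|_\Omega$ belongs to $Y^{1,2}(\Omega)$, since it is bounded with compact support (so it lies in $L^{2^*}$) and has $\nabla f\in L^2$. Its trace equals $f$ at every boundary point by continuity. I set $F:=-Lf\in Y^{-1,2}_0(\Omega)$, which is bounded by Remark~\ref{rmk:bil} with norm at most $(\lambda^{-1}+\sqrt\mu)\|\nabla f\|_2$. Let $w\in Y^{1,2}_0$ solve $Lw=F$, and define $u_f:=w+f$. Then $Lu_f=0$, and $\Tr u_f=f$ by linearity of the trace. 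For uniqueness, if $u_1,u_2$ are two solutions, then $u_1-u_2\in Y^{1,2}(\Omega)$ has zero trace, so it lies in $Y^{1,2}_0$ by Lemma~\ref{lem:dense}. It solves $L(u_1-u_2)=0$, so it vanishes by the first part.

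I expect the main obstacle to be the rigorous justification of the sign estimate for $\beta(\varphi,\varphi)$ when $\varphi$ is only in $Y^{1,2}_0$. The concern is that $(\Div\B)_+$ is given only through the one-sided inequality \eqref{eqn:divp}, and that $\B\cdot\nabla\varphi\,\varphi$ is only known to be in $L^1$ through form-boundedness. I will rely on \eqref{eqn:bpfb} for this step. If more care is needed, I will approximate $\varphi_\pm$ in $Y^{1,2}_0$ by nonnegative $C^\infty_c$ functions, using mollified truncations. In the limit, $\int\B\cdot\nabla(\varphi_i^2)$ converges by the continuity of $\beta$ from Remark~\ref{rmk:bil}, and $\int(\Div\B)_+\varphi_i^2$ converges using the weak form bound together with $\varphi_i\to\varphi$ in $Y^{1,2}_0$. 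The latter holds because the map $\varphi\mapsto((\Div\B)_+)^{1/2}\varphi$ is bounded from $Y^{1,2}_0$ to $L^2$.
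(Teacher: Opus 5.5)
Your proposal is correct and follows the paper's proof essentially step by step: coercivity $\alpha(u,u)\ge(\lambda-\delta/2)\|\nabla u\|_2^2$ via the integration-by-parts inequality \eqref{eqn:bpfb}, Lax--Milgram plus the Sobolev inequality for \eqref{eqn:est:ex}, and the lifting $u_f=w+f$ with $Lw=-Lf$ for the Dirichlet problem. The only difference is your splitting $\varphi=\varphi_+-\varphi_-$, which is a harmless extra precaution; the paper applies the inequality directly to a signed $u$, which is legitimate because for smooth signed $u$ the function $u^2$ is already a nonnegative test function in \eqref{eqn:divp}.
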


\begin{proof}
Let $\alpha$ be the bilinear form defined in~\eqref{eqn:bil}. As observed in Remark~\ref{rmk:bil}, $\alpha$ is continuous, with a continuity constant depending only on $\lambda$ and $\mu$. We claim that $\alpha$ is coercive on $Y^{1,2}_0(\Omega)\times Y^{1,2}_0(\Omega)$. Indeed, if $u\in Y^{1,2}_0(\Omega)$, then, by the uniform ellipticity of $A$,
\[
\alpha(u,u)\ge\lambda \int_{\Omega}|\nabla u|^2+\int_{\Omega}\B\cdot\nabla u\,u.
\]
Approximating $u$ by functions in $C^\infty_c(\Omega)$ and using the definition of $(\Div\B)_+$, as in~\eqref{eqn:bpfb}, we obtain
\begin{equation*}
\int_{\Omega}\B\cdot\nabla u\,u\ge-\int_{\Omega}\B\cdot\nabla u\,u-\int_\Omega (\Div\B)_+u^2,
\end{equation*}
and therefore
\begin{equation}\label{eqn:div:trick}
\int_{\Omega}\B\cdot\nabla u\,u\ge-\frac{1}{2}\int_\Omega (\Div\B)_+u^2.
\end{equation}
Using the assumption $(\Div\B)_+\in \mathbf{\dot F^1}_\delta(\Omega)$, we deduce that
\[
\alpha(u,u)\ge \lambda \int_{\Omega}|\nabla u|^2-\frac{1}{2}\int_\Omega (\Div\B)_+u^2
\ge \left(\lambda-\frac{\delta}{2}\right)\int_{\Omega}|\nabla u|^2.
\]
Since $\delta<\lambda$, it follows that
\[
\alpha(u,u)\ge \left(\lambda-\frac{\delta}{2}\right)\|u\|_{Y^{1,2}_0(\Omega)}^2,
\]
and hence $\alpha$ is coercive.

The Lax--Milgram theorem therefore yields the existence and uniqueness of a solution $u\in Y^{1,2}_0(\Omega)$ to~\eqref{eqn:poisson}. Moreover,
\[
\|\nabla u\|_{L^2(\Omega)}
\le \left(\lambda-\frac{\delta}{2}\right)^{-1}\|F\|_{Y^{-1,2}_0(\Omega)}.
\]
Combining this estimate with the Sobolev inequality from Lemma~\ref{lem:dense} gives~\eqref{eqn:est:ex}.

Let now $f\in\textup{Lip}_c(\R^n)$. Then $f\in Y^{1,2}(\Omega)$, and hence $Lf\in Y^{-1,2}_0(\Omega)$. Let $w\in Y^{1,2}_0(\Omega)$ be the unique solution to~\eqref{eqn:poisson} with $F=-Lf$, and set $u_f:=w+f$. Then $Lu_f=0$ in $\Omega$ and $\Tr u_f=f$ on $\partial\Omega$. Uniqueness follows from linearity and the uniqueness of solutions to~\eqref{eqn:poisson}.
\end{proof}

\begin{rmk}
The inequality~\eqref{eqn:div:trick}, or suitable variants of it, will be used repeatedly in our proofs to estimate integrals of the form $\int_{\Omega}\B\cdot\nabla u\,u\varphi$, where $0\le\varphi\in C^\infty_c(B(x,r))$ (or $\varphi\in \textup{Lip}_b(\Omega)$), for some $x\in\overline\Omega$ and $r>0$, and $\Tr u=0$ on $\Delta(x,r)$. In what follows, we will use the integration-by-parts argument leading to~\eqref{eqn:div:trick} without repeating the intermediate steps. More precisely, we will directly use
\[
\int_{\Omega}\B\cdot\nabla u\,u\varphi\ge -\frac{1}{2}\int_{\Omega}(\Div\B)_+ u^2\varphi-\frac{1}{2}\int_{\Omega}\B\cdot\nabla\varphi\, u^2.
\]
Taking $\varphi=1$ and assuming that $(\Div\B)_+=0$, the formula becomes
\begin{equation}\label{eqn:beta:0}
\beta(u,u)=\int_{\Omega}\B\cdot\nabla u\,u \ge 0,\quad \textup{if }(\Div\B)_+=0.
\end{equation}
\end{rmk}

\begin{rmk}\label{rmk:adjoint}
Let $A$, $\B$, and $L$ be as in Lemma~\ref{lem:existence}. The adjoint operator $L^*:Y^{1,2}_0(\Omega)\to Y^{-1,2}(\Omega)$ is bounded, and the associated bilinear form $\alpha^*$, defined by $\alpha^*(u,v):=\langle L^*u,v\rangle=\langle Lv,u\rangle$, is bounded on $Y^{1,2}_0(\Omega)\times Y^{1,2}(\Omega)$. Its restriction to $Y^{1,2}_0(\Omega)\times Y^{1,2}_0(\Omega)$ is coercive. Therefore, by the Lax--Milgram theorem, for every $F\in Y^{-1,2}_0(\Omega)$ there exists a unique $u^*\in Y^{1,2}_0(\Omega)$ such that
\[
\alpha^*(u^*,\varphi)=\langle F,\varphi\rangle,\qquad \forall\,\varphi\in Y^{1,2}_0(\Omega).
\]
\end{rmk}

\section{Local boundedness and H\"older continuity up to the boundary}\label{sec:5}

In this section, we establish the De Giorgi--Nash--Moser theory for the operator $L=-\Div(A\nabla)+\B\cdot\nabla$. The interior De Giorgi--Nash--Moser theory for operators of this type has been investigated under assumptions on $\B$ very similar to ours (see \cite{LisZ04}, \cite{NazU12}, \cite{Har15}, \cite{KinS22}, \cite{KinV}, and \cite{Hof26}). However, we were unable to find proofs of the interior local estimates under our exact assumptions. We therefore also provide the corresponding interior estimates for the operator $L$. Since their proofs rely on the same ideas as their boundary counterparts, we omit them and focus our exposition on estimates up to the boundary, where the drift $\B$ may be more singular and the boundary values of solutions and test functions must be taken into account when manipulating equations and inequalities.

\begin{rmk}
Although our goal is to prove local estimates, in a ball $B$, for local solutions, the inequalities defining the assumptions $\B\in\mathbf{\dot F^2}_\mu(\Omega)$ and $(\Div\B)_+\in \mathbf{\dot F^1}_\delta(\Omega)$ are global. This causes no difficulty. Indeed, we apply these global inequalities to suitable cutoffs of the solutions under consideration, so that the region outside the support of the cutoff does not contribute. This is analogous to the standard proofs of the interior elliptic theory for the operator $L_0=-\Div(A\nabla)$, where global Sobolev inequalities are applied to compactly supported cutoffs in order to derive local estimates (see, e.g., \cite[Chapters 3--4]{HanL00}).
\end{rmk}

We first prove a Caccioppoli inequality for subsolutions, which will be used to establish local boundedness and H\"older continuity of solutions.

\begin{lem}\label{lem:Cacc:int}
Let $\Omega\subset\R^n$ be a one-sided chord-arc domain with unbounded complement. Let $A:\Omega\to \R^{n\times n}$ be uniformly elliptic, and let $\B$ be a vector field satisfying $\B\in \mathbf{\dot F^2}_\mu(\Omega)$ and $(\Div \B)_+\in \mathbf{\dot F}_{\delta}(\Omega)$, with $\mu>0$ and $0<\delta<\lambda$, where $\lambda$ is the ellipticity constant of $A$.
Let $X\in\Omega$, $0<4r\le\delta(X)$, and let $u\in W^{1,2}(B(X,4r))$ be a subsolution in $B(X,4r)$, that is,
\begin{equation}\label{eqn:subsol:int}
\int_{B(X,4r)}A\nabla u\cdot\nabla\varphi+\int_{B(X,4r)}\B\cdot\nabla u\,\varphi\le0
\end{equation}
for every nonnegative $\varphi\in C^\infty_c(B(X,4r))$.

Set $v:=(u-k)_+$, with $k\ge0$. Then, for every nonnegative $\eta\in C^\infty_c(B(X,4r))$, we have
\begin{align}
\label{c_ineq:int}
\int_{B(X,4r)} |\nabla v|^2 \eta^2 \leq C \int_{B(X,4r)} v^2|\nabla\eta|^2,
\end{align}
where $C$ depends only on $\mu$, $\delta$, and $\lambda$.
\end{lem}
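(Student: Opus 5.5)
We test the subsolution inequality~\eqref{eqn:subsol:int} with $\varphi=v\eta^2$, where $v=(u-k)_+$, and then absorb the drift terms using the two form-boundedness hypotheses. Since $k\ge0$ and $u\in W^{1,2}(B(X,4r))$, we have $v\in W^{1,2}(B(X,4r))$ and $v\eta^2$ is a nonnegative compactly supported $W^{1,2}$ function. By density (approximating by nonnegative smooth functions and using Remark~\ref{rmk:fb} to control the drift term in the limit, since $|\B|\,v\eta$ is in $L^2$ by form-boundedness applied to $v\eta\in Y^{1,2}_0(\Omega)$), the test is admissible. On $\{u>k\}$ we have $\nabla u=\nabla v$, while $v\eta^2=0$ elsewhere. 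Hence
\[
\int A\nabla v\cdot\nabla v\,\eta^2+2\int A\nabla v\cdot\nabla\eta\, v\eta+\int \B\cdot\nabla v\, v\eta^2\le 0 .
\]
The first term is bounded below by $\lambda\int|\nabla v|^2\eta^2$. The second is handled by Young's inequality: $\le \epsilon\int|\nabla v|^2\eta^2+C_\epsilon\lambda^{-2}\int v^2|\nabla\eta|^2$.

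The key step is the drift term, where we cannot use smallness of $\mu$. First we write $\B\cdot\nabla v\, v\eta^2=\tfrac12\B\cdot\nabla(v^2)\eta^2$. Then we rewrite this in terms of $w:=v\eta\in Y^{1,2}_0(\Omega)$, using
\[
\tfrac12\B\cdot\nabla(v^2)\eta^2=\tfrac12\B\cdot\nabla(w^2)-\B\cdot\nabla\eta\, v^2\eta .
\]
Applying~\eqref{eqn:div:trick} to $w$ gives
\[
\tfrac12\int\B\cdot\nabla(w^2)=\int \B\cdot\nabla w\,w\ge -\tfrac12\int(\Div\B)_+w^2\ge -\tfrac{\delta}{2}\int|\nabla(v\eta)|^2 .
\]
Expanding, $|\nabla(v\eta)|^2\le(1+\epsilon)|\nabla v|^2\eta^2+C_\epsilon v^2|\nabla\eta|^2$. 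The remaining cross term is where the large form bound $\mu$ enters. By Cauchy--Schwarz,
\[
\Big|\int\B\cdot\nabla\eta\,v^2\eta\Big|\le \Big(\int|\B|^2v^2\eta^2\Big)^{1/2}\Big(\int v^2|\nabla\eta|^2\Big)^{1/2},
\]
and $\int|\B|^2(v\eta)^2\le\mu\int|\nabla(v\eta)|^2$. Young's inequality then gives the bound $\epsilon\int|\nabla(v\eta)|^2+\frac{\mu}{4\epsilon}\int v^2|\nabla\eta|^2$. This is the crucial point: the drift never multiplies $|\nabla v|^2$ with a coefficient of size $\sqrt\mu$, only the cutoff gradient, so $\mu$ enters only the constant $C$.

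Collecting the estimates gives
\[
\big(\lambda-\tfrac{\delta}{2}(1+\epsilon)-C\epsilon\big)\int|\nabla v|^2\eta^2\le C(\epsilon,\lambda,\mu,\delta)\int v^2|\nabla\eta|^2 .
\]
Because $\delta<\lambda$ (indeed only $\delta<2\lambda$ is needed), we can choose $\epsilon$ small depending on $\lambda,\delta$ so that the coefficient on the left is positive, which yields~\eqref{c_ineq:int}.

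The main obstacle is the justification step rather than the algebra. We must make sure the integration by parts behind~\eqref{eqn:div:trick} is legitimate for $w=v\eta$, which is compactly supported in $B(X,4r)\subset\Omega$ and hence lies in $Y^{1,2}_0(\Omega)$. We must also make sure the drift term in the tested inequality is well defined and passes to the limit under approximation. Both follow from Remark~\ref{rmk:fb} and~\eqref{eqn:bpfb}, since all drift integrals are controlled by $\||\B|\,v\eta\|_2\,\|\nabla v\|_{L^2(\supp\eta)}$.
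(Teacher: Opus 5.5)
Your proof is correct and is essentially the paper's argument: the paper writes it out for the boundary analogue, Lemma~\ref{lem:Cacc}, and the interior case is the same. Both test with $v\eta^2$, integrate the drift term by parts to get $\frac12\int(\Div\B)_+v^2\eta^2+\int\B\cdot\nabla\eta\,v^2\eta$ (your splitting via $w=v\eta$ and \eqref{eqn:div:trick} produces exactly these two terms), and absorb both using the form bounds applied to $v\eta$. Your sharper expansion $|\nabla(v\eta)|^2\le(1+\epsilon)|\nabla v|^2\eta^2+C_\epsilon v^2|\nabla\eta|^2$, in place of the paper's factor $2$, correctly shows that $\delta<2\lambda$ would suffice, consistent with the coercivity constant $\lambda-\delta/2$ in Lemma~\ref{lem:existence}.
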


\begin{lem}\label{lem:Cacc}
Let $\Omega\subset\R^n$ be a one-sided chord-arc domain with unbounded complement. Let $A:\Omega\to \R^{n\times n}$ be uniformly elliptic, and let $\B$ be a vector field satisfying $\B\in \mathbf{\dot F^2}_\mu(\Omega)$ and $(\Div \B)_+\in \mathbf{\dot F}_{\delta}(\Omega)$, with $\mu>0$ and $0<\delta<\lambda$, where $\lambda$ is the ellipticity constant of $A$.
Let $x\in\partial\Omega$, $0<5r\le\diam(\Omega)$, and let $u\in W^{1,2}(D(x,5r))$ be a subsolution with zero trace in $B(x,4r)$, that is, $\Tr u=0$ on $\Delta(x,4r)$ and
\begin{equation}\label{eqn:subsol}
\int_{D(x,4r)}A\nabla u\cdot\nabla\varphi+\int_{D(x,4r)}\B\cdot\nabla u\,\varphi\le0
\end{equation}
for every $\varphi\in Y^{1,2}(\Omega)$ such that $\varphi\ge0$ in $D(x,4r)$, $\Tr\varphi=0$ on $\Delta(x,4r)$, and $\varphi\equiv0$ on $\Omega\setminus B(x,\rho)$ for some $0<\rho<4r$.

Set $v:=(u-k)_+$, with $k\ge0$. Then, for every nonnegative $\eta\in C^\infty_c(B(x,4r))$, we have
\begin{align}
\label{c_ineq}
\int_{D(x,4r)} |\nabla v|^2 \eta^2 \leq C \int_{D(x,4r)} v^2|\nabla\eta|^2,
\end{align}
where $C$ depends only on $\mu$, $\delta$, and $\lambda$.
\end{lem}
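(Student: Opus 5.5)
We test the subsolution inequality \eqref{eqn:subsol} with $\varphi:=v\eta^2$, where $v=(u-k)_+$ and $0\le\eta\in C^\infty_c(B(x,4r))$. This test function is admissible. Since $k\ge0$ and $\Tr u=0$ on $\Delta(x,4r)$, we have $\Tr v=0$ there. Moreover $v\eta^2\ge0$, it vanishes outside $B(x,\rho)$ for some $\rho<4r$, and $v\eta^2\in Y^{1,2}(\Omega)$ after extending by zero, by the chain rule for truncations and the trace condition. Using $\nabla u=\nabla v$ on $\{v>0\}$, the inequality becomes
\[
\int A\nabla v\cdot\nabla v\,\eta^2+2\int A\nabla v\cdot\nabla\eta\,v\eta+\int\B\cdot\nabla v\,v\eta^2\le0 .
\]
The first term is bounded below by $\lambda\int|\nabla v|^2\eta^2$. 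By \eqref{ellipticity1} and Young's inequality, the second is bounded in absolute value by $\epsilon\int|\nabla v|^2\eta^2+C_\epsilon\lambda^{-2}\int v^2|\nabla\eta|^2$.

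The drift term is the main point, and we handle it as in \eqref{eqn:div:trick}. Integrating by parts via \eqref{eqn:bpfb} (approximating $v$ by smooth compactly supported functions in the zero-trace sense on the support of $\eta$) gives
\[
\int\B\cdot\nabla v\,v\eta^2\ge-\frac12\int(\Div\B)_+v^2\eta^2-\int\B\cdot\nabla\eta\,v^2\eta .
\]
We apply the weak form bound to $v\eta\in Y^{1,2}_0(\Omega)$, which is legitimate by Remark~\ref{rmk:fb}. This yields
\[
\frac12\int(\Div\B)_+(v\eta)^2\le\frac\delta2\int|\nabla(v\eta)|^2\le\frac\delta2(1+\theta)\int|\nabla v|^2\eta^2+C_\theta\delta\int v^2|\nabla\eta|^2 .
\]
For the last term we use Cauchy--Schwarz and then the form bound $\B\in\mathbf{\dot F^2}_\mu$ applied to $v\eta$:
\[
\Bigl|\int\B\cdot\nabla\eta\,v^2\eta\Bigr|\le\Bigl(\int|\B|^2v^2\eta^2\Bigr)^{1/2}\Bigl(\int v^2|\nabla\eta|^2\Bigr)^{1/2}\le\sqrt\mu\,\|\nabla(v\eta)\|_2\,\|v\nabla\eta\|_2 .
\]
Young's inequality then bounds this by $\epsilon'\int|\nabla v|^2\eta^2+C(\mu,\epsilon')\int v^2|\nabla\eta|^2$. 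Crucially, no smallness of $\mu$ is needed here, because the drift always carries one factor of $\nabla\eta$.

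Collecting terms, the coefficient in front of $\int|\nabla v|^2\eta^2$ is $\lambda-\frac\delta2(1+\theta)-\epsilon-\epsilon'$. Since $\delta<\lambda$, this is positive, at least $(\lambda-\delta)/4>0$, once $\theta,\epsilon,\epsilon'$ are chosen small depending on $\lambda$ and $\delta$. Absorbing gives \eqref{c_ineq} with $C=C(\lambda,\mu,\delta)$.

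The step that needs the most care is justifying the integration by parts \eqref{eqn:bpfb} and the form bounds for the localized function $v\eta$, which lives only in $D(x,4r)$ and has zero trace only on $\Delta(x,4r)$. To do this, we note that $v\eta$ extended by zero lies in $Y^{1,2}_0(\Omega)$ because $\eta$ kills the remaining boundary portion. We then apply \eqref{eqn:bpfb} with $\varphi=v^2\eta^2$-type products, or more precisely write $\B\cdot\nabla v\,v\eta^2=\tfrac12\B\cdot\nabla(v^2)\eta^2$ and use \eqref{eqn:bpfb} with the pair $(v\eta,v\eta)$ after expanding. Local integrability of every term follows from Remark~\ref{rmk:fb}.
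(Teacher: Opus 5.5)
Your proposal is correct and follows essentially the same route as the paper: test \eqref{eqn:subsol} with $v\eta^2$, integrate the drift term by parts via \eqref{eqn:bpfb} (your use of the pair $(v\eta,v\eta)$ is a clean way to justify this), absorb the cross terms with Young's inequality, and apply both form bounds to $v\eta\in Y^{1,2}_0(\Omega)$. The only difference is bookkeeping: you keep the factor $\tfrac{\delta}{2}(1+\theta)$ in front of $\int|\nabla v|^2\eta^2$, whereas the paper bounds it by $\delta$. Both versions close under the hypothesis $\delta<\lambda$.
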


\begin{proof}
Since $\Tr u=0$ on $\Delta(x,4r)$, we also have $\Tr v=0$ on $\Delta(x,4r)$. Thus, we may use $\varphi=v\eta^2$ as a test function in~\eqref{eqn:subsol}. Using that $\nabla u=\nabla v$ on $\supp v$ and the uniform ellipticity of $A$, we obtain
\begin{align*}
\lambda \int_{D(x,4r)}|\nabla v|^2\eta^2
&\le \int_{D(x,4r)}A\nabla u\cdot\nabla v\,\eta^2\\
&= \int_{D(x,4r)}A\nabla u\cdot \nabla(v\eta^2)
   -2\int_{D(x,4r)}A\nabla u\cdot\nabla\eta\,\eta v\\
&\le -\int_{D(x,4r)}\B\cdot\nabla v\,v\eta^2
   -2\int_{D(x,4r)}A\nabla v\cdot\nabla\eta\,\eta v\\
&\le \frac{1}{2}\int_{D(x,4r)}(\Div\B)_+v^2\eta^2
   +\int_{D(x,4r)}\B\cdot\nabla\eta\,v^2\eta
   -2\int_{D(x,4r)}A\nabla v\cdot\nabla\eta\,\eta v.
\end{align*}
Using Young's inequality $ab\le\epsilon a^2+(4\epsilon)^{-1}b^2$ and the uniform ellipticity of $A$, we obtain
\begin{align*}
(\lambda-\epsilon)\int_{D(x,4r)}|\nabla v|^2\eta^2
&\le \frac{1}{2}\int_{D(x,4r)}(\Div\B)_+v^2\eta^2
+\epsilon\int_{D(x,4r)}|\B|^2v^2\eta^2
+C\int_{D(x,4r)}v^2|\nabla\eta|^2,
\end{align*}
where $\epsilon>0$ will be chosen momentarily and $C$ depends only on $\lambda$ and $\epsilon$.

Using the assumptions $\B\in \mathbf{\dot F^2}_\mu(\Omega)$ and $(\Div\B)_+\in \mathbf{\dot F}_{\delta}(\Omega)$, applied to $v\eta$, and choosing $\epsilon>0$ so that $\lambda-\epsilon-2\epsilon\mu-\delta>0$, we obtain
\begin{align*}
\int_{D(x,4r)}|\nabla v|^2\eta^2
\le C\int_{D(x,4r)}v^2|\nabla\eta|^2.
\end{align*}
\end{proof}

Next, we prove local boundedness of the positive part of a subsolution.

\begin{lem}\label{lem:boundedness:int}
Let $\Omega\subset\R^n$ be a one-sided chord-arc domain with unbounded complement. Let $A:\Omega\to \R^{n\times n}$ be uniformly elliptic, and let $\B$ be a vector field satisfying $\B\in \mathbf{\dot F^2}_\mu(\Omega)$ and $(\Div \B)_+\in \mathbf{\dot F}_{\delta}(\Omega)$, with $\mu>0$ and $0<\delta<\lambda$, where $\lambda$ is the ellipticity constant of $A$.
Let $X\in\Omega$, $0<4r\le\delta(X)$, and let $u\in W^{1,2}(B(X,4r))$ be a subsolution in $B(X,4r)$, in the sense of~\eqref{eqn:subsol:int}.

Then, for every $0<p<\infty$,
\begin{align}\label{eqn:sup:int}
\sup_{B(X,r)} u_+ \leq C\biggl( \fint_{B(X,2r)} u_+^p\biggr)^{1/p},
\end{align}
where $C$ depends only on $\mu$, $\delta$, $\lambda$, $p$, and $n$.
\end{lem}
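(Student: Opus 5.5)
By the scaling invariance in Remark~\ref{rmk:scaling}, we reduce to $r=1$ and $X=0$. The plan is a De Giorgi iteration on level sets, driven by the interior Caccioppoli inequality of Lemma~\ref{lem:Cacc:int}. Lemma~\ref{lem:Cacc:int} already absorbs the drift: the form bound of $\B$ handles the cross term $\int\B\cdot\nabla\eta\,v^2\eta$, and $(\Div\B)_+$ with $\delta<\lambda$ handles $\int\B\cdot\nabla v\,v\eta^2$. So from that point on, the argument is the classical one for $-\Div(A\nabla)$.

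\textbf{Step 1: the case $p=2$.} Fix radii $1\le\rho_{j+1}<\rho_j\le 2$ with $\rho_j=1+2^{-j}$, levels $k_j=k(1-2^{-j})$ with $k>0$ to be chosen, and set
\[
v_j=(u-k_j)_+,\qquad A_j=\{v_j>0\}\cap B(0,\rho_j),\qquad Y_j=\int_{B(0,\rho_j)}v_j^2 .
\]
Take $\eta_j$ a cutoff equal to $1$ on $B(0,\rho_{j+1})$, supported in $B(0,\rho_j)$, with $|\nabla\eta_j|\lesssim 2^j$. Then:
\begin{itemize}
\item Lemma~\ref{lem:Cacc:int} applied to $v_{j+1}$ and $\eta_j$ gives $\|\nabla(v_{j+1}\eta_j)\|_2^2\lesssim 4^jY_j$.
\item Sobolev's inequality for the compactly supported function $v_{j+1}\eta_j$, combined with H\"older's inequality, gives
\[
Y_{j+1}\le\|v_{j+1}\eta_j\|_{2^*}^2\,|A_{j+1}|^{2/n}.
\]
\item Chebyshev's inequality gives $|A_{j+1}|\le (k_{j+1}-k_j)^{-2}Y_j=4^{j+1}k^{-2}Y_j$.
\end{itemize}
Together these yield
\[
Y_{j+1}\le C\,b^j k^{-4/n}Y_j^{1+2/n}.
\]
Choosing $k=C'\|u_+\|_{L^2(B(0,2))}$ forces $Y_j\to0$, so $u\le k$ on $B(0,1)$. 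This proves \eqref{eqn:sup:int} for $p=2$, and it holds on any pair of concentric balls $B(Z,s)\subset B(Z,t)$ with constant $C(t-s)^{-n/2}$. Here I rescale the same argument, with $\rho_j$ interpolating between $s$ and $t$.

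\textbf{Step 2: all $0<p<\infty$.} For $p>2$ the claim follows from Step 1 by H\"older's inequality. For $p<2$ use the standard interpolation. For $1\le s<t\le2$,
\[
\sup_{B_s}u_+\le C(t-s)^{-n/2}\Bigl(\int_{B_t}u_+^2\Bigr)^{1/2}
\le C(t-s)^{-n/2}\bigl(\sup_{B_t}u_+\bigr)^{1-p/2}\Bigl(\int_{B_2}u_+^p\Bigr)^{1/2}.
\]
Young's inequality then gives
\[
\sup_{B_s}u_+\le\tfrac12\sup_{B_t}u_+ + C(t-s)^{-n/p}\|u_+\|_{L^p(B_2)}.
\]
The standard iteration lemma for such inequalities removes the first term. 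This step requires $\sup_{B_t}u_+<\infty$ a priori, which Step 1 provides since $u\in W^{1,2}$.

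\textbf{Main obstacle.} The main point is to make sure Lemma~\ref{lem:Cacc:int} is really applicable with the test function $v\eta^2$ on balls of varying radii inside $B(0,4)$. In particular, the global form-bound inequalities must be applied to $v\eta\in W^{1,2}_0(B(0,4))$. This is legitimate by Remark~\ref{rmk:fb}, because such functions lie in $Y^{1,2}_0(\Omega)$.

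The constants depend only on $\mu,\delta,\lambda,n,p$. The exponent bookkeeping in the iteration is routine.
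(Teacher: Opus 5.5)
Your proposal is correct and follows essentially the same route as the paper. The paper also argues by De Giorgi iteration for $p=2$ (Caccioppoli~\eqref{c_ineq:int}, Sobolev, Chebyshev), by H\"older for $p>2$, and by interpolation plus the Han--Lin iteration lemma for $0<p<2$. The only difference is cosmetic: you get the rescaled bound $C(t-s)^{-n/2}\|u_+\|_{L^2(B_t)}$ by iterating between concentric balls $B_s\subset B_t$, while the paper's boundary version applies the $p=2$ estimate in small balls of radius comparable to $t-s$ centered at points of the smaller ball; both give the same factor.
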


\begin{rmk}
The proof of the case $p=2$ in~\eqref{eqn:sup:int} is identical to that of the corresponding case in~\eqref{eqn:sup}, which is given below. The case $p>2$ follows immediately from H\"older's inequality, whereas the case $0<p<2$ follows by iterating a scaled version of the $p=2$ estimate. This argument is standard (see \cite[Theorem 4.1]{HanL00}) and will also be used, with the appropriate modifications, in the proof of Lemma~\ref{lem:boundedness}. We therefore omit the details here.
\end{rmk}

\begin{lem}\label{lem:boundedness}
Let $\Omega\subset\R^n$ be a one-sided chord-arc domain with unbounded complement.
Let $A:\Omega\to \R^{n\times n}$ be uniformly elliptic, and let $\B$ be a vector field satisfying $\B\in \mathbf{\dot F^2}_\mu(\Omega)$ and $(\Div \B)_+\in \mathbf{\dot F}_{\delta}(\Omega)$, with $\mu>0$ and $0<\delta<\lambda$, where $\lambda$ is the ellipticity constant of $A$.
Let $x\in\partial\Omega$, $0<5r\le\diam(\Omega)$, and let $u\in W^{1,2}(D(x,5r))$ be a subsolution with zero trace on $\Delta(x,4r)$, in the sense of~\eqref{eqn:subsol}.
Then, for every $0<p<\infty$,
\begin{align}\label{eqn:sup}
\sup_{D(x,r)} u_+ \leq C\biggl( \fint_{D(x,2r)} u_+^p\biggr)^{1/p},
\end{align}
where $C$ depends only on $\mu$, $\delta$, $\lambda$, $p$, $n$, and the geometric constants of $\Omega$.
\end{lem}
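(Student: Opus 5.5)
We prove \eqref{eqn:sup} first for $p=2$ by a De Giorgi iteration built on the boundary Caccioppoli inequality, Lemma~\ref{lem:Cacc}. The case $p>2$ then follows from H\"older's inequality, and the case $0<p<2$ by interpolation. By Remark~\ref{rmk:scaling} we may assume $r=1$. Extend $u$ by zero outside $\Omega$; more precisely, for $k\ge0$ we regard $v_k:=(u-k)_+$ as a function on $B(x,4)$ that vanishes off $\Omega$. Since $\Tr u=0$ on $\Delta(x,4)$, whenever $\eta\in C^\infty_c(B(x,4))$ the product $v_k\eta$ has zero trace on $\Delta(x,4)$. Hence $v_k\eta\in Y^{1,2}_0(\Omega)$, and its zero extension lies in $W^{1,2}_0(B(x,4))$.

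Consequently $v_k\eta$ satisfies the Sobolev inequality of Lemma~\ref{lem:dense}, $\|v_k\eta\|_{2^*}\le C_n\|\nabla(v_k\eta)\|_2$. This is the key point: no geometry of $\Omega$ enters the Sobolev step, because we work with test functions vanishing on the boundary portion. Combining this with \eqref{c_ineq} gives a reverse-H\"older-type bound,
\[
\Bigl(\int_{D(x,\rho)}v_k^{2^*}\Bigr)^{2/2^*}\le \frac{C}{(R-\rho)^2}\int_{D(x,R)}v_k^2,\qquad 1\le\rho<R\le 2 .
\]
Here $\eta$ is a cutoff equal to $1$ on $B(x,\rho)$, supported in $B(x,R)$, with $|\nabla\eta|\lesssim (R-\rho)^{-1}$.

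Next we run the standard De Giorgi scheme. Set
\[
r_j=1+2^{-j},\qquad k_j=K(1-2^{-j}),\qquad A_j=\int_{D(x,r_j)}v_{k_j}^2 .
\]
By H\"older's inequality,
\[
A_{j+1}\le \Bigl(\int_{D(x,r_{j+1})} v_{k_{j+1}}^{2^*}\Bigr)^{2/2^*}\,\bigl|\{v_{k_{j+1}}>0\}\cap D(x,r_{j+1})\bigr|^{2/n},
\]
and by Chebyshev's inequality,
\[
\bigl|\{u>k_{j+1}\}\cap D(x,r_j)\bigr|\le (k_{j+1}-k_j)^{-2}A_j .
\]
Together these give
\[
A_{j+1}\le C\,4^{j}\,\bigl(4^{j}K^{-2}\bigr)^{2/n}A_j^{1+2/n}.
\]
The usual fast-geometric-convergence lemma then yields $A_j\to0$ provided $K=C\|u_+\|_{L^2(D(x,2))}$ with $C$ large. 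Therefore $u_+\le K$ on $D(x,1)$, and since $|D(x,2)|\approx 1$ by the corkscrew condition, this is \eqref{eqn:sup} for $p=2$, with averages in place of integrals.

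For $0<p<2$ we apply the $p=2$ estimate at intermediate scales, centered at boundary points or at interior points. For interior balls we use Lemma~\ref{lem:boundedness:int}, noting that a subsolution with zero trace is still a subsolution in interior balls. This gives
\[
\sup_{D(x,\rho)}u_+\le C(R-\rho)^{-n/2}\Bigl(\int_{D(x,R)}u_+^2\Bigr)^{1/2}
\le C(R-\rho)^{-n/2}\,\Bigl(\sup_{D(x,R)}u_+\Bigr)^{1-p/2}\Bigl(\int_{D(x,R)}u_+^p\Bigr)^{1/2},
\]
and Young's inequality together with the standard iteration lemma (as in \cite[Theorem 4.1]{HanL00}) absorbs the supremum.

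The main obstacle is the covering step needed to get the intermediate-scale estimate on $D(x,\rho)$ with $\rho<R$ near the boundary. A point $Y\in D(x,\rho)$ is either close to $\partial\Omega$, in which case we apply the boundary estimate at a nearby $y\in\partial\Omega$ with a radius comparable to $R-\rho$, so the ball stays inside $B(x,R)$ and the zero trace is inherited. Otherwise it is far from $\partial\Omega$, and we use the interior estimate. This is also where the dependence on the geometric constants enters, through $|D(y,s)|\approx s^n$. A further technical check is that the truncated, cut-off test functions are admissible in \eqref{eqn:subsol}, which is what Lemma~\ref{lem:Cacc} already requires.
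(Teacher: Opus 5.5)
Your proposal is correct and follows essentially the same route as the paper. For $p=2$, the paper runs a De Giorgi iteration on $D(x,1+2^{-j})$ with levels $K(1-2^{-j})$, combining the boundary Caccioppoli inequality~\eqref{c_ineq} with the Sobolev inequality for $v_k\eta\in Y^{1,2}_0(\Omega)$, H\"older, and Chebyshev. It then uses H\"older for $p>2$, and for $0<p<2$ the same interior/boundary dichotomy at scale comparable to $R-\rho$, followed by Young's inequality and the standard iteration lemma.
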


\begin{proof}
We follow the classical proof of De Giorgi in \cite{DeG57}, which we include for completeness.

By scaling, we may assume that $r=1$ (see Remark~\ref{rmk:scaling}).
For $k>0$ and a decreasing sequence of radii $(s_i)_{i\in\N}$ with $1<s_{i+1}<s_i\le2$, set $A(k,s_i):=\{Y\in D(x,s_i):u(Y)>k\}$. Let $\eta\in C^\infty_c(B(x,4))$ be a nonnegative cutoff function such that $\eta\equiv1$ on $B(x,s_{i+1})$, $\eta\equiv0$ outside $B(x,s_i)$, and $|\nabla\eta|\le C(s_i-s_{i+1})^{-1}$. An application of Lemma~\ref{lem:Cacc} to $v:=(u-k)_+$ gives
\[
\int_{\Omega}|\nabla(v\eta)|^2
=\int_{D(x,s_i)}|\nabla(v\eta)|^2
\le C\int_{D(x,s_i)}v^2|\nabla\eta|^2
\le C(s_i-s_{i+1})^{-2}\int_{D(x,s_i)}v^2.
\]
Set $2^*=\frac{2n}{n-2}$ and $u_k:=u-k$. By the Sobolev embedding and the density of smooth compactly supported functions in $Y^{1,2}_0(\Omega)$ (see Lemma~\ref{lem:dense}), we obtain
\[
\biggl(\int_{A(k,s_{i+1})}u_k^{2^*}\biggr)^{\frac{2}{2^*}}
\le \biggl(\int_{\Omega}(v\eta)^{2^*}\biggr)^{\frac{2}{2^*}}
\le C\int_{\Omega}|\nabla(v\eta)|^2
\le C(s_i-s_{i+1})^{-2}\int_{A(k,s_i)}u_k^2,
\]
which in turn implies
\begin{equation}\label{eqn:first:step}
\int_{A(k,s_{i+1})}u_k^2
\le
C|A(k,s_{i+1})|^{1-\frac{2}{2^*}}
(s_i-s_{i+1})^{-2}
\int_{A(k,s_i)}u_k^2.
\end{equation}

Let $k>h$. On $A(k,s_{i+1})$, we have $u_h>k-h$ and $u_k\le u_h$. Using Markov inequality, we estimate
\[
|A(k,s_{i+1})|
\le (k-h)^{-2}\int_{A(h,s_i)}u_h^2.
\]
Plugging this estimate into the right-hand side of~\eqref{eqn:first:step}, we obtain
\begin{equation}\label{eqn:iteration}
\int_{A(k,s_{i+1})}u_k^2
\le
C(k-h)^{-2\gamma}(s_i-s_{i+1})^{-2}
\biggl(\int_{A(h,s_i)}u_h^2\biggr)^{1+\gamma},
\end{equation}
where $\gamma:=1-\frac{2}{2^*}>0$.

We choose $s_i:=1+2^{-i}$ and $k_i:=M(1-2^{-i})$, where $M>0$ will be chosen below. Setting
\[
Y_i:=\|(u-k_i)_+\|_{L^2(D(x,s_i))},
\]
and taking square roots in~\eqref{eqn:iteration}, we obtain
\[
Y_{i+1}
\le
CM^{-\gamma}2^{(1+\gamma)i}Y_i^{1+\gamma}.
\]
Notice that $k_0=0$ and $s_0=2$, and hence
\[
Y_0=\|u_+\|_{L^2(D(x,2))}.
\]
Setting $Z_i:=Y_i/Y_0$, we have $Z_0=1$ and
\[
Z_{i+1}
\le
C\left(\frac{Y_0}{M}\right)^\gamma
2^{(1+\gamma)i}Z_i^{1+\gamma}.
\]
Choosing
\[
M=C_\gamma Y_0
=C_\gamma\|u_+\|_{L^2(D(x,2))},
\]
with $C_\gamma$ sufficiently large, the standard nonlinear iteration yields $Y_i\to0$ as $i\to\infty$. Since $s_i\to1$ and $k_i\to M$, we conclude that
\[
\|(u-M)_+\|_{L^2(D(x,1))}=0,
\]
that is, $u\le M$ almost everywhere in $D(x,1)$. This proves the case $p=2$ of~\eqref{eqn:sup}.
The case $p>2$ follows immediately from H\"older's inequality. 

We now turn to the case $0<p<2$.
First, we prove a rescaled version of the $L^2$ estimate. Let $r<R<2r$, and let $X\in D(x,r)$ be a Lebesgue point of $u_+$. If $\delta(X)\ge (R-r)/4$, we apply~\eqref{eqn:sup:int} in $B(X,(R-r)/16)$. If $\delta(X)<(R-r)/4$, choose $z\in\partial\Omega$ such that $|X-z|=\delta(X)$. Then $z\in\Delta(x,r+(R-r)/4)$, and we apply the $p=2$ case of~\eqref{eqn:sup} in $D(z,(R-r)/4)$. In either case, we obtain
\begin{equation*}
u_+(X)\le C(R-r)^{-n/2}\|u_+\|_{L^2(D(x,R))}.
\end{equation*}
Taking the supremum over $X\in D(x,r)$ and using H\"older's inequality, we obtain
\begin{align*}
\sup_{D(x,r)}u_+
&\le C(R-r)^{-n/2}
\|u_+\|_{L^p(D(x,R))}^{p/2}
\sup_{D(x,R)}u_+^{(2-p)/2}.
\end{align*}
By Young's inequality, for every $0<\epsilon<1$,
\begin{align*}
\sup_{D(x,r)}u_+
&\le
C_\epsilon(R-r)^{-n/p}\|u_+\|_{L^p(D(x,R))}
+\epsilon\sup_{D(x,R)}u_+.
\end{align*}
Iterating this inequality along a suitable sequence of radii converging to $2r$ yields~\eqref{eqn:sup}; see \cite[Lemma 4.3]{HanL00} for details of the iteration procedure.
\end{proof}

We now prove a weak Harnack inequality for interior supersolutions.

\begin{lem}\label{lem:w:h}
Let $\Omega\subset\R^n$ be a one-sided chord-arc domain with unbounded complement.
Let $A:\Omega\to \R^{n\times n}$ be uniformly elliptic, and let $\B$ be a vector field satisfying $\B\in \mathbf{\dot F^2}_\mu(\Omega)$ and $(\Div \B)_+\in \mathbf{\dot F}_{\delta}(\Omega)$, with $\mu>0$ and $0<\delta<\lambda$, where $\lambda$ is the ellipticity constant of $A$.
Let $X\in\Omega$, $0<4r\le\delta(X)$, and let $u\in W^{1,2}(B(X,4r))$ be a nonnegative supersolution in $B(X,4r)$, that is,
\begin{equation}\label{eqn:supersol:int}
\int_{B(X,4r)}A\nabla u\cdot\nabla\varphi+\int_{B(X,4r)}\B\cdot\nabla u\,\varphi\ge0
\end{equation}
for every nonnegative $\varphi\in C^\infty_c(B(X,4r))$.

Then there exists $p_0>0$, depending only on $\mu$, $\delta$, $\lambda$, and $n$, such that
\begin{equation}\label{eqn:w:h}
\inf_{B(X,r)}u\ge C^{-1}\Bigl(\fint_{B(X,2r)}u^{p_0}\Bigr)^{1/p_0},
\end{equation}
where $C$ depends only on $\mu$, $\delta$, $\lambda$, and $n$.
\end{lem}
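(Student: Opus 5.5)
By the scaling in Remark~\ref{rmk:scaling} we may assume $r=1$, and by replacing $u$ with $u+\varepsilon$ (still a supersolution, since constants are annihilated by $L$) and letting $\varepsilon\to0$ at the end, we may assume $u\ge\varepsilon>0$. We follow the classical Moser scheme: negative powers of $u$ are subsolution-like, $\log u$ belongs to BMO, and the John--Nirenberg inequality bridges positive and negative exponents. The one place where the drift requires care is the treatment of the terms $\int\B\cdot\nabla u\,u^{\beta}\eta^2$. These are handled exactly as in Lemma~\ref{lem:Cacc}: we either write them as $\frac{1}{\beta+1}\int \B\cdot\nabla(u^{\beta+1})\eta^2$ and integrate by parts using~\eqref{eqn:bpfb}, which yields a $(\Div\B)_+$ term and a $\B\cdot\nabla\eta$ term, or we bound them by Young's inequality together with the form bound $\int|\B|^2 w^2\eta^2\le\mu\int|\nabla(w\eta)|^2$.

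\textbf{Step 1 (negative powers).} For $\beta<0$, test~\eqref{eqn:supersol:int} with $\varphi=u^{\beta}\eta^2$ and set $w=u^{(\beta+1)/2}$ (or $w=u^{\beta/2}$, with the obvious modifications when $\beta=-1$). Ellipticity gives $|\beta|\lambda\int u^{\beta-1}|\nabla u|^2\eta^2$ on the good side. The drift term $\int\B\cdot\nabla u\, u^\beta\eta^2=\frac{2}{\beta+1}\int \B\cdot\nabla w\, w\,\eta^2$ is absorbed through Young's inequality plus the form bound on $|\B|^2 w^2\eta^2$ with a small parameter. The constant then depends on $\mu$ and $|\beta|$, but it stays bounded for $\beta$ ranging in compact subsets away from $0$ and $-1$, and grows polynomially as $|\beta|\to\infty$. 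Combined with Sobolev, this gives the reverse-H\"older iteration $\|u^{-1}\|_{L^{q\chi}(B_{s'})}\le (C(1+q)^{a}/(s-s'))^{2/q}\|u^{-1}\|_{L^q(B_s)}$, and Moser iteration yields $\sup_{B_1}u^{-1}\le C(\fint_{B_{3/2}}u^{-p})^{1/p}$ for every $p>0$. Equivalently, since $u^{-1}$ behaves like a subsolution in this weak sense, one can mimic Lemma~\ref{lem:boundedness:int}.

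\textbf{Step 2 (logarithm in BMO).} Test with $\varphi=u^{-1}\eta^2$ and set $w=\log u$. This gives $\lambda\int|\nabla w|^2\eta^2\le 2\lambda^{-1}\int|\nabla w||\nabla\eta|\eta+\int|\B||\nabla w|\eta^2$. The last term is at most $\epsilon\int|\nabla w|^2\eta^2+C_\epsilon\int|\B|^2\eta^2\le\epsilon\int|\nabla w|^2\eta^2+C_\epsilon\mu\int|\nabla\eta|^2$. Hence $\int_{B(Y,\rho)}|\nabla\log u|^2\le C\rho^{n-2}$ for every ball $B(Y,2\rho)\subset B(X,3)$. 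By Poincar\'e, $\log u\in BMO$ with norm depending only on $\lambda,\mu,n$. This step uses no smallness of $\mu$: the form bound appears only through the cutoff, and this is precisely why large $\mu$ is harmless here.

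\textbf{Step 3 (crossover).} John--Nirenberg gives $p_0>0$ with $\fint e^{p_0 w}\fint e^{-p_0 w}\le C$ on $B(X,2)$, so that $(\fint_{B_2}u^{p_0})^{1/p_0}\le C(\fint_{B_2}u^{-p_0})^{-1/p_0}$. Combined with Step 1 (taking $p=p_0$, after a covering adjustment of radii) this gives~\eqref{eqn:w:h}.

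\textbf{Main obstacle.} The main difficulty is Step 1 near $\beta=-1$ and for small $|\beta|$, where the prefactor $\frac{2}{\beta+1}$ or the coercivity $|\beta|\lambda$ degenerates and the form bound cannot be absorbed. To avoid this, we would choose only exponents bounded away from those values (say $\beta\le-2$, i.e.\ iterate negative powers $q\ge1$, and reach $q=p_0$ by the Young/iteration trick used for $0<p<2$ in Lemma~\ref{lem:boundedness}). Since $(\Div\B)_+$ enters only with a favorable or $\delta$-controlled sign, the condition $\delta<\lambda$ suffices wherever the integration-by-parts variant is used instead.
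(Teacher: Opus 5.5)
\textbf{Verdict: there is a genuine gap, in Step 1.} Your Steps 2 and 3 are exactly the paper's argument: test with $\eta^2u^{-1}$, apply the form bound to $\eta$ alone, then Poincar\'e and John--Nirenberg. The gap is Step 1 as you wrote it.

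Put $\beta=-1-q$ with $q>0$ and $w=u^{-q/2}$. The good term is $\frac{4(1+q)\lambda}{q^2}\int|\nabla w|^2\eta^2$, and the drift term equals $-\frac{2}{q}\int\B\cdot\nabla w\,w\,\eta^2$. Suppose you bound the drift term by Cauchy--Schwarz/Young and then apply the form bound to $w\eta$. The most you get is $\frac{2\sqrt{\mu}}{q}\|\eta\nabla w\|_2\|\nabla(w\eta)\|_2$. Absorption then needs $\sqrt{\mu}<2\lambda(1+q)/q$.

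Moser's iteration requires $q\to\infty$, and there this becomes the smallness condition $\mu<4\lambda^2$. The lemma allows arbitrary $\mu$. So your claim that the constant ``grows polynomially as $|\beta|\to\infty$'' is false.

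Your diagnosis in the last paragraph is also backwards. Near $\beta=-1^-$ this route is harmless, since the good coefficient $\sim q^{-2}$ beats $\sqrt{\mu}\,q^{-1}$. Restricting to $\beta\le-2$ puts you exactly in the regime where it fails.

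\textbf{How to repair Step 1.} Use the integration-by-parts variant, which you mention but do not commit to. For $\beta<-1$ it has the right sign. Write the drift term as $-\frac1q\int\B\cdot\nabla(w^2)\,\eta^2$. The definition of $(\Div\B)_+$ then gives the upper bound
\[
\frac1q\int(\Div\B)_+w^2\eta^2+\frac1q\int\B\cdot\nabla(\eta^2)\,w^2 .
\]
The first term costs at most $\frac{\delta}{q}\|\nabla(w\eta)\|_2^2$. In the second term, Young's inequality puts the small parameter in front of $\mu$. Everything then closes uniformly in $q$ using only $\delta<\lambda$.

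For $-1<\beta<0$ the sign is wrong and you would need $(\Div\B)_-$. Thanks to John--Nirenberg, that range is never needed.

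\textbf{The paper's shortcut.} The paper avoids redoing any iteration. Testing with $\varphi=\psi u^{-2}$ shows that $u^{-1}$ is a genuine subsolution of the same operator $L$. Because the drift is first order, the chain rule produces only the favorable extra term $-2u^{-3}A\nabla u\cdot\nabla u\le 0$. Lemma~\ref{lem:boundedness:int} then gives
\[
\sup_{B(X,r)}u^{-1}\le C\Bigl(\fint_{B(X,2r)}u^{-p}\Bigr)^{1/p}\quad\text{for every } p>0 .
\]
Its Caccioppoli inequality, Lemma~\ref{lem:Cacc:int}, already uses exactly the integration by parts above. So your remark that $u^{-1}$ ``behaves like a subsolution'' is literally true, and it is the quickest way to close the gap.
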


\begin{proof}
Up to replacing $u$ with $u+\epsilon$ and letting $\epsilon\to0$ at the end of the proof, we may assume that $u\ge\epsilon>0$.

Let $0\le\psi\in C^\infty_c(B(X,4r))$. Using $\varphi=\psi u^{-2}$ as a test function in~\eqref{eqn:supersol:int}, we obtain
\[
-\int_{B(X,4r)}A\nabla u^{-1}\cdot\nabla\psi
-\int_{B(X,4r)}\B\cdot\nabla u^{-1}\,\psi
\ge
2\int_{B(X,4r)}A\nabla u\cdot\nabla u\,u^{-3}\psi
\ge0,
\]
where in the last inequality we used the ellipticity of $A$. Thus, $u^{-1}$ is a nonnegative subsolution in $B(X,4r)$. By~\eqref{eqn:sup:int}, for every $p>0$,
\[
\inf_{B(X,r)}u
\ge
C^{-1}\Bigl(\fint_{B(X,2r)}u^{-p}\Bigr)^{-1/p}.
\]
Set $w:=\log u-\fint_{B(X,2r)}\log u$. Then
\[
\Bigl(\fint_{B(X,2r)}u^{-p}\Bigr)^{-1/p}
\ge
\Bigl(\fint_{B(X,2r)}e^{p|w|}\Bigr)^{-2/p}
\Bigl(\fint_{B(X,2r)}u^p\Bigr)^{1/p}.
\]
Therefore, it is enough to prove that there exists $p_0>0$ such that
$\fint_{B(X,2r)}e^{p_0|w|}\le C$. By the John--Nirenberg lemma (see \cite[Theorem 3.5]{HanL00}), it suffices to prove that $\log u$ has uniformly bounded mean oscillation in $B(X,2r)$.

Let $B(Z,s)\subset B(X,2r)$. Then $B(Z,2s)\subset B(X,4r)$. By the Poincar\'e inequality,
\[
\fint_{B(Z,s)}
\left|\log u-\fint_{B(Z,s)}\log u\right|
\le
Cs\Bigl(\fint_{B(Z,s)}|\nabla\log u|^2\Bigr)^{1/2}.
\]
Thus, it remains to prove
\begin{equation}\label{eqn:step:int}
\int_{B(Z,s)}|\nabla\log u|^2\le Cs^{n-2}.
\end{equation}
Set $B:=B(Z,s)$ and $B':=B(Z,3s/2)$. Let $0\le\eta\le1$ be a smooth function such that $\eta\equiv1$ on $B$, $\eta\equiv0$ outside $B'$, and $|\nabla\eta|\le Cs^{-1}$. Using $\eta^2u^{-1}$ as a test function in~\eqref{eqn:supersol:int}, we obtain
\begin{align*}
\int_{B'}|\nabla\log u|^2\eta^2
&\le C\int_{B'}A\nabla u\cdot\nabla u\,\eta^2u^{-2}\\
&\le C\int_{B'}\B\cdot\nabla\log u\,\eta^2
+2C\int_{B'}A\nabla\log u\cdot\nabla\eta\,\eta.
\end{align*}
Using Young's inequality, the uniform boundedness of $A$, and the assumption $\B\in\mathbf{\dot F^2}_\mu(\Omega)$, we obtain
\[
\int_{B'}|\nabla\log u|^2\eta^2
\le
C\int_{B'}|\B|^2\eta^2
+C\int_{B'}|\nabla\eta|^2
\le
C\int_{B'}|\nabla\eta|^2
\le
Cs^{n-2}.
\]
Since $\eta\equiv1$ on $B$, this proves~\eqref{eqn:step:int}, ending the proof of~\eqref{eqn:w:h}.
\end{proof}

We obtain the Harnack inequality for nonnegative solutions as an immediate consequence of Lemmas~\ref{lem:w:h} and~\ref{lem:boundedness:int}.

\begin{lem}\label{lem:h}
Let $\Omega\subset\R^n$ be a one-sided chord-arc domain with unbounded complement.
Let $A:\Omega\to \R^{n\times n}$ be uniformly elliptic, and let $\B$ be a vector field satisfying $\B\in \mathbf{\dot F^2}_\mu(\Omega)$ and $(\Div \B)_+\in \mathbf{\dot F}_{\delta}(\Omega)$, with $\mu>0$ and $0<\delta<\lambda$, where $\lambda$ is the ellipticity constant of $A$.
Let $X\in\Omega$, $0<4r\le\delta(X)$, and let $u\in W^{1,2}(B(X,4r))$ be a nonnegative solution in $B(X,4r)$, that is,
\begin{equation}\label{eqn:sol:int}
\int_{B(X,4r)}A\nabla u\cdot\nabla\varphi+\int_{B(X,4r)}\B\cdot\nabla u\,\varphi=0
\end{equation}
for every $\varphi\in C^\infty_c(B(X,4r))$.

Then,
\begin{equation}\label{eqn:h}
\sup_{B(X,r)}u\le C\inf_{B(X,r)}u,
\end{equation}
where $C$ depends only on $\mu$, $\delta$, $\lambda$, and $n$.
\end{lem}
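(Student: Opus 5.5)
The Harnack inequality~\eqref{eqn:h} follows by combining the two one-sided estimates already available. A nonnegative solution $u$ in $B(X,4r)$ is in particular a subsolution in the sense of~\eqref{eqn:subsol:int}. It is also a nonnegative supersolution in the sense of~\eqref{eqn:supersol:int}, since~\eqref{eqn:sol:int} holds with equality for every $\varphi\in C^\infty_c(B(X,4r))$, and in particular for nonnegative ones.

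First, apply Lemma~\ref{lem:boundedness:int} with the exponent $p=p_0$, where $p_0>0$ is the exponent from Lemma~\ref{lem:w:h}; it depends only on $\mu$, $\delta$, $\lambda$, and $n$. Since $u\ge0$, we have $u_+=u$, and this gives
\[
\sup_{B(X,r)}u\le C\Bigl(\fint_{B(X,2r)}u^{p_0}\Bigr)^{1/p_0},
\]
with $C$ depending only on $\mu$, $\delta$, $\lambda$, $p_0$, and $n$, hence only on $\mu$, $\delta$, $\lambda$, and $n$.

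Second, apply Lemma~\ref{lem:w:h} on the same ball. This gives
\[
\Bigl(\fint_{B(X,2r)}u^{p_0}\Bigr)^{1/p_0}\le C\inf_{B(X,r)}u.
\]
Chaining the two inequalities yields~\eqref{eqn:h} with a constant depending only on $\mu$, $\delta$, $\lambda$, and $n$.

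There is no real obstacle here; the only points to check are bookkeeping. Both lemmas are stated on the same configuration: $0<4r\le\delta(X)$, the function in $W^{1,2}(B(X,4r))$, supremum and infimum over $B(X,r)$, and the average over $B(X,2r)$. The local boundedness estimate holds for arbitrary $p\in(0,\infty)$, so the small exponent $p_0$ is admissible, and the reverse H\"older step needed to pass between $p_0$ and $p=2$ is already built into Lemma~\ref{lem:boundedness:int}.
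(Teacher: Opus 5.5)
Your proposal is correct and is the paper's argument: the paper obtains \eqref{eqn:h} as an immediate consequence of Lemma~\ref{lem:boundedness:int}, applied with $p=p_0$, chained with the weak Harnack inequality of Lemma~\ref{lem:w:h} on the same configuration of balls. Your bookkeeping on the admissible exponent and on the dependence of the constants is accurate.
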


Since Lemma~\ref{lem:boundedness:int} ensures that local solutions are locally bounded, a standard consequence of the Harnack inequality~\eqref{eqn:h} is the H\"older continuity of solutions.

\begin{lem}\label{lem:h:int}
Let $\Omega\subset\R^n$ be a one-sided chord-arc domain with unbounded complement.
Let $A:\Omega\to \R^{n\times n}$ be uniformly elliptic, and let $\B$ be a vector field satisfying $\B\in \mathbf{\dot F^2}_\mu(\Omega)$ and $(\Div \B)_+\in \mathbf{\dot F}_{\delta}(\Omega)$, with $\mu>0$ and $0<\delta<\lambda$, where $\lambda$ is the ellipticity constant of $A$.
Let $X\in\Omega$, $0<4r\le\delta(X)$, and let $u\in W^{1,2}(B(X,4r))$ be a solution in $B(X,4r)$, in the sense of~\eqref{eqn:sol:int}.

Then there exists $\alpha>0$ such that, for every $0<s<r$,
\begin{equation}\label{eqn:höld:int}
\osc_{B(X,s)}u\le C\biggl(\frac{s}{r}\biggr)^\alpha\biggl(\fint_{B(X,2r)}u^2\biggr)^{1/2},
\end{equation}
where $C$ and $\alpha$ depend only on $\mu$, $\delta$, $\lambda$, and $n$.
\end{lem}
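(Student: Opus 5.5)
The plan is the standard oscillation-decay argument built on the Harnack inequality of Lemma~\ref{lem:h}. The key structural point is that the class of solutions is invariant under adding constants and under multiplication by constants, since $L$ has no zeroth-order term. By scaling (Remark~\ref{rmk:scaling}) we may assume $r=1$. For $0<\rho\le 1$, set
\[
M(\rho):=\sup_{B(X,\rho)}u,\qquad m(\rho):=\inf_{B(X,\rho)}u,\qquad \omega(\rho):=M(\rho)-m(\rho).
\]
Lemma~\ref{lem:boundedness:int} with $p=2$, applied to $u$ and to $-u$ (both are solutions, hence subsolutions), gives
\[
\omega(1)\le 2\sup_{B(X,1)}|u|\le C\Bigl(\fint_{B(X,2)}u^2\Bigr)^{1/2}.
\]
The same lemma shows that $u$ is locally bounded, so the quantities above are finite.

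Next we fix $\rho\le 1$ and consider the two nonnegative functions $M(\rho)-u$ and $u-m(\rho)$. Both are solutions in $B(X,\rho)$ and lie in $W^{1,2}(B(X,\rho))$. We apply Lemma~\ref{lem:h} to each on the ball $B(X,\rho)$, which plays the role of $B(X,4r')$ with $r'=\rho/4$. This gives
\[
M(\rho)-m(\rho/4)\le C\bigl(M(\rho)-M(\rho/4)\bigr),\qquad M(\rho/4)-m(\rho)\le C\bigl(m(\rho/4)-m(\rho)\bigr).
\]
Adding the two inequalities yields $\omega(\rho)+\omega(\rho/4)\le C\bigl(\omega(\rho)-\omega(\rho/4)\bigr)$, that is,
\[
\omega(\rho/4)\le \theta\,\omega(\rho),\qquad \theta:=\frac{C-1}{C+1}<1,
\]
where $C$ is the Harnack constant. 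This constant depends only on $\mu,\delta,\lambda,n$. Its independence of the ball is essential, and it holds because the assumptions are scale invariant and global, so they are inherited by every subball.

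Iterating, we get $\omega(4^{-k})\le\theta^k\omega(1)$. Given $0<s<1$, choose $k$ with $4^{-k-1}\le s<4^{-k}$ and set $\alpha:=\log_4(1/\theta)$. By monotonicity of $\omega$,
\[
\omega(s)\le \omega(4^{-k})\le \theta^{k}\omega(1)\le \theta^{-1}s^{\alpha}\,\omega(1).
\]
Combining this with the $L^\infty$ bound from the first step gives~\eqref{eqn:höld:int}.

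The only delicate step is the oscillation decay. There we must check that the sup and inf are essential sup/inf, and that Lemma~\ref{lem:h} applies on subballs: a solution in $B(X,4)$ is still a solution in $B(X,\rho)$, and $4\cdot(\rho/4)\le\delta(X)$ holds trivially. Everything else is routine.
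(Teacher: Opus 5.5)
Your proposal is correct and takes the route the paper indicates. The paper gives no detailed proof and simply calls the lemma a standard consequence of local boundedness (Lemma~\ref{lem:boundedness:int}) and the Harnack inequality~\eqref{eqn:h}; your oscillation-decay argument, applying~\eqref{eqn:h} to $M(\rho)-u$ and $u-m(\rho)$ and iterating over the radii $4^{-k}$, is that standard argument written out in full.
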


We now turn to the H\"older continuity up to the boundary of solutions with H\"older continuous boundary data. The key step is the following density lemma for supersolutions, inspired by \cite[Lemma 8.14]{DavFM21}.

\begin{lem}\label{lem:density}
Let $\Omega\subset\R^n$ be a one-sided chord-arc domain with unbounded complement.
Let $A:\Omega\to \R^{n\times n}$ be uniformly elliptic, and let $\B$ be a vector field satisfying $\B\in \mathbf{\dot F^2}_\mu(\Omega)$ and $(\Div \B)_+\in \mathbf{\dot F}_{\delta}(\Omega)$, with $\mu>0$ and $0<\delta<\lambda$, where $\lambda$ is the ellipticity constant of $A$.
Let $x\in\partial\Omega$, $0<5r\le\diam(\Omega)$, and let $u\in W^{1,2}(D(x,5r))$ be a nonnegative supersolution in $B(x,4r)$, that is,
\begin{equation}\label{eqn:supersol}
\int_{D(x,4r)}A\nabla u\cdot\nabla\varphi+\int_{D(x,4r)}\B\cdot\nabla u\,\varphi\ge0
\end{equation}
for every $\varphi\in Y^{1,2}(\Omega)$ such that $\varphi\ge 0$ in $D(x,4r)$, $\Tr \varphi=0$ on $\Delta(x,4r)$, and $\varphi\equiv 0$ on $\Omega\setminus B(x,\rho)$ for some $0<\rho<4r$.

Assume that $\Tr u\equiv 1$ on $\Delta(x,4r)$. Then
\begin{equation}\label{eqn:density}
\inf_{D(x,r)}u\ge c,
\end{equation}
where $c>0$ depends only on $\mu$, $\delta$, $\lambda$, $n$, and the geometric constants of $\Omega$.
\end{lem}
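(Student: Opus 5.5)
The plan is to reduce the density estimate to the boundary local boundedness estimate, Lemma~\ref{lem:boundedness}, applied to a suitable subsolution vanishing on the boundary, and then combine it with the interior weak Harnack inequality and a Harnack chain. By scaling (Remark~\ref{rmk:scaling}) we take $r=1$.

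\textbf{Step 1: a subsolution with zero trace.} Consider $w:=(1-u)_+$. Since $u$ is a supersolution, $1-u$ is a subsolution; constants are solutions because only $\nabla u$ enters the equation. Since $\Tr u\equiv1$ on $\Delta(x,4)$, we have $\Tr(1-u)=0$ there. Moreover $0\le w\le1$ because $u\ge0$. The Caccioppoli argument of Lemma~\ref{lem:Cacc} with $k=0$ applies to $v=(1-u)_+=w$. Hence Lemma~\ref{lem:boundedness} gives, for any $p$,
\[
\sup_{D(x,1)} w\le C\Bigl(\fint_{D(x,2)}w^p\Bigr)^{1/p}.
\]
This alone gives nothing unless $w$ is small in an averaged sense, so the real task is to show that $u$ cannot be small on most of $D(x,2)$.

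\textbf{Step 2: a log or De Giorgi-type lower bound.} I would instead run the argument on the family $w_k:=(k-u)_+$ with $k\le1$, which all have zero trace on $\Delta(x,4)$. I would use a De Giorgi ``measure-to-pointwise'' lemma at the boundary. The boundary Poincar\'e inequality (Lemma~\ref{lem:poinc}) together with Caccioppoli gives that $\{u<k\}$ cannot fill $D(x,2)$ trivially. The cleaner route, following \cite[Lemma 8.14]{DavFM21}, is to extend $u$ by $1$ across the boundary, i.e.\ to set $\tilde u:=\min(u,1)$. Then $\tilde u$ is still a supersolution: the minimum of supersolutions is one, which follows by testing with truncations, and the drift term is handled as in Lemma~\ref{lem:w:h}. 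We then prove a boundary weak Harnack inequality for $\tilde u$ by the Moser scheme of Lemma~\ref{lem:w:h}. We test with $\eta^2\tilde u^{-1}$, which is admissible after adding $\epsilon$, since $\tilde u^{-1}-1$ has zero trace. This gives
\[
\int|\nabla\log\tilde u|^2\eta^2\le C\int|\B|^2\eta^2+C\int|\nabla\eta|^2 .
\]
Here $\eta$ is not compactly supported in $\Omega$. To use the form bound, I would apply it to $\eta(\log\tilde u)$-type functions with zero trace, or use $(\log(1/\tilde u))_+$, which vanishes on the boundary. The Poincar\'e inequality with zero trace then yields that $(\log(1/\tilde u))_+$ has bounded $L^2$ average on $D(x,2)$.

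\textbf{Step 3: conclusion.} Note that $(\log(1/\tilde u))_+$ is a nonnegative subsolution with zero trace, since $\log(1/\cdot)$ is convex decreasing and $u$ is a supersolution. Lemma~\ref{lem:boundedness} with $p=2$ then bounds $\sup_{D(x,1)}\log(1/\tilde u)\le C$. This gives $\inf_{D(x,1)}u\ge e^{-C}=:c$.

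\textbf{Main obstacle.} The main obstacle is Step 2. The form-bound term $\int|\B|^2\eta^2$ with a cutoff $\eta$ that does not vanish on $\partial\Omega$ is not directly controlled, because the form bound only applies to $Y^{1,2}_0$ functions. I would try to absorb it by writing $g:=\log(1/\tilde u)_+$ and testing with $\eta^2 g/\tilde u$-type functions. All resulting $\B$-terms then carry a factor $g$ vanishing on $\partial\Omega$, and we use $\int|\B|^2g^2\eta^2\le\mu\int|\nabla(g\eta)|^2$. In addition, we need either a small coefficient via Young's inequality, or the $(\Div\B)_+$ integration-by-parts trick~\eqref{eqn:div:trick}, to absorb these terms against $\lambda$.
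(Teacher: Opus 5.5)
Your skeleton matches the paper's proof. You replace $u$ by $u_\epsilon=\min(1,u+\epsilon)$, which is still a supersolution with trace $1$. You note that $v_\epsilon=-\log u_\epsilon$ is a nonnegative subsolution with zero trace on $\Delta(x,4)$. Then Lemma~\ref{lem:boundedness} and Lemma~\ref{lem:poinc} reduce everything to the bound $\|\nabla v_\epsilon\|_{L^2(D(x,3))}\le C$, with $C$ independent of $\epsilon$. That bound is the whole content of the lemma, and your proposal does not establish it.

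First, $\eta^2u_\epsilon^{-1}$ is not an admissible test function, since its trace on $\Delta(x,4)$ is $\eta^2\neq0$. The inequality you write, with $\int|\B|^2\eta^2$ on the right, is exactly what this inadmissible test function produces.

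Second, the repair you suggest under ``Main obstacle'' does not close. Testing with $\eta^2 g/u_\epsilon$, where $g=-\log u_\epsilon$, gives
\[
\lambda\int|\nabla g|^2(1+g)\eta^2\le-\int\B\cdot\nabla g\,g\,\eta^2-2\int A\nabla g\cdot\nabla\eta\,\eta\,g .
\]
After the $(\Div\B)_+$ trick, the form bound applied to $g\eta$, and Young's inequality, the most you obtain is $\int|\nabla g|^2\eta^2\le C\int(g+g^2)|\nabla\eta|^2$. This is no better than a Caccioppoli inequality. By~\eqref{c_ineq}, every nonnegative subsolution with zero trace satisfies it, including arbitrarily large multiples of a fixed one. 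So it cannot give an $\epsilon$-independent bound, and combining it with the boundary Poincar\'e inequality is circular: a priori you only know $g\le\log(1/\epsilon)$. The factor multiplying $\B$ must vanish on $\Delta(x,4)$ \emph{and} be bounded independently of $u$; $g$ has only the first property.

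The missing idea is the test function $\eta^2(u_\epsilon^{-1}-1)$, which you in fact mention when discussing admissibility. Since $\nabla u_\epsilon\,(u_\epsilon^{-1}-1)=-\nabla v_\epsilon\,(1-u_\epsilon)$, it gives
\[
\lambda\int|\nabla v_\epsilon|^2\eta^2\le-\int\B\cdot\nabla v_\epsilon\,(1-u_\epsilon)\eta^2-2\int A\nabla v_\epsilon\cdot\nabla\eta\,\eta\,(1-u_\epsilon).
\]
Young's inequality then gives $\int|\nabla v_\epsilon|^2\eta^2\le C\int|\B|^2(1-u_\epsilon)^2\eta^2+C\int|\nabla\eta|^2(1-u_\epsilon)^2$.

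Now $(1-u_\epsilon)\eta\in Y^{1,2}_0(\Omega)$, so the form bound controls the first term by $C\int|\nabla(1-u_\epsilon)|^2\eta^2+C$. Moreover, $1-u_\epsilon$ is a subsolution with zero trace on $\Delta(x,4)$ satisfying $0\le 1-u_\epsilon\le 1$. Hence the Caccioppoli inequality~\eqref{c_ineq} bounds $\int|\nabla(1-u_\epsilon)|^2\eta^2$ by $C\int(1-u_\epsilon)^2|\nabla\eta|^2\le C$.

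The absolute bound thus comes from $0\le1-u_\epsilon\le1$. In this step no smallness of $\mu$ is needed, and no $\B$-term has to be absorbed against $\lambda$.
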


\begin{proof}
By scaling, we may assume that $r=1$.

Let $\epsilon>0$ be a small number that will be sent to zero at the end of the proof. Set $u_\epsilon:=\min(1,u+\epsilon)$, and notice that $\Tr u_\epsilon=1$ on $\Delta(x,4)$. Let $g_\epsilon:\R\to\R$ be a smooth Lipschitz function such that $g_\epsilon(s)=\ln(s)$ for every $\epsilon\le s\le 1$, and define $v_\epsilon:=-g_\epsilon(u_\epsilon)$. Then $v_\epsilon=-\ln u_\epsilon$ in $D(x,4)$ and $\Tr v_\epsilon\equiv 0$ on $\Delta(x,4)$. We first prove that $v_\epsilon$ is a subsolution in $B(x,4)$. To this end, let $\varphi\in Y^{1,2}(\Omega)$ satisfy $\Tr\varphi\equiv 0$ on $\Delta(x,4)$, $\varphi\ge0$ in $D(x,4)$, and $\varphi\equiv0$ on $\Omega\setminus B(x,\rho)$ for some $0<\rho<4$. We compute
\begin{align*}
\int_{D(x,4)}A\nabla v_\epsilon\cdot\nabla\varphi+\int_{D(x,4)}\B\cdot\nabla v_\epsilon\,\varphi
=&-\int_{D(x,4)} A\nabla u_\epsilon\cdot\nabla\biggl(\frac{\varphi}{u_\epsilon}\biggr)
-\int_{D(x,4)}\B\cdot\nabla u_\epsilon\frac{\varphi}{u_\epsilon}\\
&-\int_{D(x,4)}A\nabla u_\epsilon\cdot\nabla u_\epsilon\,\varphi u_\epsilon^{-2}.
\end{align*}
Since $A\nabla u_\epsilon\cdot\nabla u_\epsilon\ge0$ by ellipticity, in order to prove that the right-hand side is nonpositive, it is enough to prove that $u_\epsilon$ is a supersolution in $D(x,4)$, that is,
\begin{equation}\label{eqn:minsol}
-\int_{D(x,4)} A\nabla u_\epsilon\cdot\nabla\psi-\int_{D(x,4)}\B\cdot\nabla u_\epsilon\,\psi\le 0,
\end{equation}
for every $\psi\in Y^{1,2}(\Omega)$ such that $\Tr\psi\equiv0$ on $\Delta(x,4)$, $\psi\ge0$ in $D(x,4)$, and $\psi\equiv0$ on $\Omega\setminus B(x,\rho)$ for some $0<\rho<4$.

Fix such a $\psi$, and let $(\Phi_i)_{i\in\N}$ be a sequence of smooth functions $\Phi_i:\R\to\R$ such that $0\le\Phi_i,\Phi_i'\le2$, $\Phi_i(s)\equiv1$ for $s\ge1$, and $\Phi_i$ is concave and nondecreasing on $[0,\infty)$. Moreover, as $i\to\infty$, $\Phi_i\to\min(1,\cdot)$ uniformly on $[0,\infty)$ and $\Phi_i'(s)\to\mathds{1}_{[0,1]}(s)$ for every $s\in[0,\infty)$, where $\mathds{1}_S$ denotes the indicator function of a set $S$. In particular, $\Phi_i(u+\epsilon)$ converges to $u_\epsilon$ in $W^{1,2}(D(x,4))$. Thus, to prove~\eqref{eqn:minsol}, it suffices to show, for every $i\in\N$, that
\[
-\int_{D(x,4)} A\nabla \Phi_i(u+\epsilon)\cdot\nabla\psi-\int_{D(x,4)}\B\cdot\nabla \Phi_i(u+\epsilon)\,\psi\le 0.
\]
We write the left-hand side as
\begin{align*}
-\int_{D(x,4)} A\nabla \Phi_i(u+\epsilon)\cdot\nabla\psi
-\int_{D(x,4)}\B\cdot\nabla \Phi_i(u+\epsilon)\,\psi
=&\int_{D(x,4)} A\nabla u\cdot\nabla u\,\psi\Phi_i''(u+\epsilon)\\
&-\int_{D(x,4)} A\nabla u\cdot\nabla(\psi\Phi_i'(u+\epsilon))\\
&-\int_{D(x,4)}\B\cdot\nabla u\,\psi\Phi'_i(u+\epsilon).
\end{align*}
Since $\Phi_i$ is concave and $A$ is uniformly elliptic, the first term on the right-hand side is nonpositive. Since $\Phi_i$ is nondecreasing, we may use $\psi\Phi_i'(u+\epsilon)$ as a test function in~\eqref{eqn:supersol}, obtaining that the sum of the second and third terms on the right-hand side is nonpositive. This proves~\eqref{eqn:minsol}.

Therefore, $v_\epsilon$ is a nonnegative subsolution with zero trace on $\Delta(x,4)$. By~\eqref{eqn:sup} and Lemma~\ref{lem:poinc},
\begin{equation*}
\sup_{D(x,1)} v_\epsilon
\le C\biggl(\int_{D(x,2)}v_\epsilon^2\biggr)^{1/2}
\le C\biggl(\int_{D(x,3)}|\nabla v_\epsilon|^2\biggr)^{1/2}.
\end{equation*}
This can be rewritten as
\begin{equation}\label{eqn:inf:first}
\min(1,u+\epsilon)\ge \exp\bigl(-C\|\nabla v_\epsilon\|_{L^2(D(x,3))}\bigr)
\quad\text{in }D(x,1).
\end{equation}
Therefore, it remains to prove that
\begin{equation}\label{eqn:grad:bound}
\|\nabla v_\epsilon\|_{L^2(D(x,3))}\le C,
\end{equation}
with $C$ independent of $\epsilon$. Indeed,~\eqref{eqn:grad:bound} and~\eqref{eqn:inf:first}, followed by letting $\epsilon\to0$, yield~\eqref{eqn:density}.

 Let $\eta\in C^\infty_c(B(x,4))$ be a nonnegative function such that $\eta\equiv1$ on $B(x,3)$ and $|\nabla\eta|\le C$. Using $\eta^2(u_\epsilon^{-1}-1)$ as a test function in~\eqref{eqn:minsol} gives
\begin{align*}
\lambda \int_{D(x,4)}|\nabla v_\epsilon|^2\eta^2
&\le \int_{D(x,4)}A\nabla u_\epsilon\cdot\nabla u_\epsilon\,\eta^2u_\epsilon^{-2}\\
&=-\int_{D(x,4)}A\nabla u_\epsilon\cdot\nabla[(u_\epsilon^{-1}-1)\eta^2]
+2\int_{D(x,4)}A\nabla u_\epsilon\cdot\nabla\eta\,\eta(u_\epsilon^{-1}-1)\\
&\le -\int_{D(x,4)}\B\cdot\nabla v_\epsilon(1-u_\epsilon)\eta^2
-2\int_{D(x,4)}A\nabla v_\epsilon\cdot\nabla\eta\,\eta(1-u_\epsilon).
\end{align*}
Applying Young's inequality to the two terms on the right-hand side and absorbing the resulting terms involving $|\nabla v_\epsilon|^2\eta^2$ into the left-hand side, we obtain
\[
\int_{D(x,3)}|\nabla v_\epsilon|^2
\le \int_{D(x,4)}|\nabla v_\epsilon|^2\eta^2
\le C\int_{D(x,4)}|\B|^2(1-u_\epsilon)^2\eta^2
+C\int_{D(x,4)}|\nabla\eta|^2(1-u_\epsilon)^2.
\]
Using the assumption $\B\in \mathbf{\dot F^2}_\mu(\Omega)$ and the facts that $0\le1-u_\epsilon\le1$ and $|\nabla\eta|\le C$, we obtain
\begin{align*}
\int_{D(x,3)}|\nabla v_\epsilon|^2
\le C\int_{D(x,4)}|\nabla(1-u_\epsilon)|^2\eta^2+C.
\end{align*}
By~\eqref{eqn:minsol}, $1-u_\epsilon$ is a nonnegative subsolution in $B(x,4)$ with zero trace on $\Delta(x,4)$. Therefore, by the Caccioppoli inequality~\eqref{c_ineq},
\[
\int_{D(x,4)}|\nabla(1-u_\epsilon)|^2\eta^2
\le C\int_{D(x,4)}(1-u_\epsilon)^2|\nabla\eta|^2
\le C,
\]
which completes the proof of~\eqref{eqn:grad:bound}.
\end{proof}

\begin{rmk}\label{rmk:min:supersol}
Notice that the proof of~\eqref{eqn:minsol} shows the more general fact that, if $u$ is a nonnegative supersolution, then so is $\min(u,1)$.
\end{rmk}

Now, we have all the tools to obtain an estimate controlling the oscillation of solutions.

\begin{lem}\label{lem:osc}
Let $\Omega\subset\R^n$ be a one-sided chord-arc domain with unbounded complement.
Let $A:\Omega\to \R^{n\times n}$ be uniformly elliptic, and let $\B$ be a vector field satisfying $\B\in \mathbf{\dot F^2}_\mu(\Omega)$ and $(\Div \B)_+\in \mathbf{\dot F}_{\delta}(\Omega)$, with $\mu>0$ and $0<\delta<\lambda$, where $\lambda$ is the ellipticity constant of $A$.
Let $x\in\partial\Omega$, $0<5r\le\diam(\Omega)$, and let $u\in W^{1,2}(D(x,5r))$ be a bounded solution in $B(x,4r)$, that is,
\begin{equation}\label{eqn:sol}
\int_{D(x,4r)}A\nabla u\cdot\nabla\varphi+\int_{D(x,4r)}\B\cdot\nabla u\,\varphi=0
\end{equation}
for every $\varphi\in Y^{1,2}(\Omega)$ with $\Tr \varphi=0$ on $\Delta(x,4r)$ and $\varphi\equiv 0$ on $\Omega\setminus B(x,\rho)$ for some $0<\rho<4r$. Assume that $\Tr u$ is uniformly bounded on $\Delta(x,4r)$.

Then, there exists $\eta\in(0,1)$, depending only on $\mu$, $\delta$, $\lambda$, $n$, and the geometric constants of $\Omega$, such that
\begin{equation}\label{eqn:osc}
\osc_{D(x,r)} u\le \eta \osc_{D(x,4r)} u+(1-\eta)\osc_{\Delta(x,4r)}u.
\end{equation}
\end{lem}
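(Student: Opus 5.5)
The plan is to derive the oscillation decay from the density Lemma~\ref{lem:density}, applied to suitably normalized versions of $u$. Set
\[
M:=\sup_{D(x,4r)}u,\quad m:=\inf_{D(x,4r)}u,\quad M_b:=\sup_{\Delta(x,4r)}\Tr u,\quad m_b:=\inf_{\Delta(x,4r)}\Tr u,
\]
so that $m\le m_b\le M_b\le M$. We may assume $M_b<M$ or $m_b>m$, since otherwise \eqref{eqn:osc} is trivial with any $\eta$. The functions $M-u$ and $u-m$ are nonnegative solutions in $B(x,4r)$; constants are solutions because $L$ has no zero-order term. Their traces on $\Delta(x,4r)$ are at least $M-M_b$ and $m_b-m$, respectively.

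\textbf{Upper side.} If $M>M_b$, consider
\[
w:=\min\Bigl(\frac{M-u}{M-M_b},\,1\Bigr).
\]
By Remark~\ref{rmk:min:supersol}, $w$ is a nonnegative supersolution in $B(x,4r)$, and $\Tr w\equiv1$ on $\Delta(x,4r)$. We must check that the truncation and the trace are compatible with the class of test functions in \eqref{eqn:supersol}. This follows from the argument proving \eqref{eqn:minsol}, which only used local test functions with zero trace on $\Delta(x,4r)$. Lemma~\ref{lem:density} then gives $w\ge c$ on $D(x,r)$, that is,
\[
\sup_{D(x,r)}u\le M-c(M-M_b).
\]

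\textbf{Lower side.} Symmetrically, if $m<m_b$, apply the same argument to $\min\bigl((u-m)/(m_b-m),1\bigr)$ to get
\[
\inf_{D(x,r)}u\ge m+c(m_b-m).
\]
If instead $M=M_b$ (respectively $m=m_b$), these inequalities hold trivially.

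\textbf{Combining.} Subtracting the two bounds gives
\[
\osc_{D(x,r)}u\le(1-c)(M-m)+c(M_b-m_b).
\]
Setting $\eta:=1-c\in(0,1)$, this is exactly \eqref{eqn:osc}, with $\osc_{\Delta(x,4r)}u=M_b-m_b$.

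The main obstacle is technical rather than conceptual. We must verify that $(M-u)/(M-M_b)$ has trace at least $1$ in the sense required, and that Lemma~\ref{lem:density} still applies when the trace is only $\ge1$ before truncation. Truncating at $1$ fixes the trace to be exactly $1$, since the trace of $\min(f,1)$ is $\min(\Tr f,1)$ by the definition of trace as a limit of averages together with $|\min(a,1)-\min(b,1)|\le|a-b|$. Boundedness of $u$ guarantees that $M$ and $m$ are finite and that all the functions involved lie in $W^{1,2}(D(x,5r))$.
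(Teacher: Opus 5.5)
Your proposal is correct and is essentially the argument the paper relies on: the paper omits the proof and defers to \cite[Lemma 8.15]{DavFM21}, whose proof is this normalization-plus-truncation argument built on Lemma~\ref{lem:density} and Remark~\ref{rmk:min:supersol}. The only cosmetic point is that you should take $c<1$ (for instance, replace $c$ by $\min(c,1/2)$) so that $\eta=1-c$ genuinely lies in $(0,1)$.
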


\begin{proof}
The proof is the same as that of \cite[Lemma 8.15]{DavFM21}; therefore, we omit it. We only point out that, although \cite{DavFM21} deals with equations without lower-order terms in domains with higher-codimensional boundaries, the tools needed to prove \cite[Lemma 8.15]{DavFM21} are a density lemma for nonnegative supersolutions and the fact that, if $u$ is a supersolution, then so is $\min(u,1)$. In the present setting, these tools are provided by Lemma~\ref{lem:density} and Remark~\ref{rmk:min:supersol}.
\end{proof}

The inequality~\eqref{eqn:osc} can be iterated using \cite[Lemma 8.23]{GilT01}, which applies to functions satisfying a functional inequality such as~\eqref{eqn:osc}. Together with Lemma~\ref{lem:boundedness}, this yields H\"older continuity of solutions as a straightforward corollary of Lemma~\ref{lem:osc}.

\begin{thm}\label{thm:h}
Let $\Omega\subset\R^n$ be a one-sided chord-arc domain with unbounded complement.
Let $A:\Omega\to \R^{n\times n}$ be uniformly elliptic, and let $\B$ be a vector field satisfying $\B\in \mathbf{\dot F^2}_\mu(\Omega)$ and $(\Div \B)_+\in \mathbf{\dot F}_{\delta}(\Omega)$, with $\mu>0$ and $0<\delta<\lambda$, where $\lambda$ is the ellipticity constant of $A$.
Let $x\in\partial\Omega$, $0<5r\le\diam(\Omega)$, and let $u\in W^{1,2}(D(x,5r))$ be a solution in $B(x,4r)$, in the sense of~\eqref{eqn:sol}. If $\Tr u$ is bounded on $\Delta(x,4r)$ and $u$ is bounded in $D(x,2r)$, then there exists $\alpha>0$ such that, for every $0<s\le r$,
\begin{equation}\label{eqn:höld}
\osc_{D(x,s)}u\le C\biggl(\frac{s}{r}\biggr)^\alpha\osc_{D(x,r)}u+C\osc_{\Delta(x,\sqrt{sr})}\Tr u,
\end{equation}
where $C$ and $\alpha$ depend only on $\mu$, $\delta$, $\lambda$, $n$, and the geometric constants of $\Omega$.

If $\Tr u\equiv 0$ on $\Delta(x,4r)$, then $u$ is bounded in $D(x,2r)$ and, for every $0<s\le r$,
\begin{equation}\label{eqn:höld:1}
\osc_{D(x,s)}u\le C\biggl(\frac{s}{r}\biggr)^\alpha\biggl(\fint_{D(x,2r)}u^2\biggr)^{1/2},
\end{equation}
where $C$ depends only on $\mu$, $\delta$, $\lambda$, $n$, and the geometric constants of $\Omega$.
\end{thm}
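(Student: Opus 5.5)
The plan is to iterate the one-step oscillation decay of Lemma~\ref{lem:osc} across dyadic-type scales, using \cite[Lemma 8.23]{GilT01}, and to deduce the zero-trace statement from Lemma~\ref{lem:boundedness}.

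\textbf{Step 1: the oscillation inequality at every scale.} Since the hypotheses are scale invariant (Remark~\ref{rmk:scaling}) and a solution in $B(x,4r)$ is also a solution in $B(x,4\rho)$ for $\rho\le r$, Lemma~\ref{lem:osc} applies at every radius $\rho\le r/2$. Here $u$ is bounded in $D(x,2r)$, so all oscillations are finite. Set
\[
\omega(\rho):=\osc_{D(x,\rho)}u, \qquad \sigma(\rho):=\osc_{\Delta(x,\rho)}\Tr u .
\]
Then for $\rho\le r/2$,
\[
\omega(\rho/4)\le \eta\,\omega(\rho)+(1-\eta)\,\sigma(\rho).
\]
The function $\omega$ is nondecreasing, and $\sigma$ is nondecreasing as well.

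\textbf{Step 2: iteration.} Apply \cite[Lemma 8.23]{GilT01} with $\tau=1/4$ and $\gamma=\eta$, choosing the parameter $\mu=1/2$ in that lemma. For $s\le r$ it gives
\[
\omega(s)\le C\Bigl(\tfrac{s}{r}\Bigr)^{\alpha}\omega(r)+C\,\sigma\bigl(\sqrt{sr}\bigr).
\]
The mechanism is to iterate from the scale $\sqrt{sr}$ down to $s$. This yields about $\log(\sqrt{r/s})$ steps, and each step multiplies by $\eta$. Every boundary term is bounded by $\sigma(\sqrt{sr})$, and the geometric sum is bounded by $(1-\eta)^{-1}$. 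The factor $(1-\eta)$ cancels it.

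To be careful, in the lemma I would replace $r$ by $r/2$ so that Lemma~\ref{lem:osc} is valid at the top scale. For $s$ comparable to $r$ the estimate is trivial, since $\omega(s)\le\omega(r)$. This gives \eqref{eqn:höld}.

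\textbf{Step 3: the zero-trace case.} Assume now $\Tr u\equiv0$ on $\Delta(x,4r)$. Both $u$ and $-u$ are solutions, hence subsolutions with zero trace. Applying \eqref{eqn:sup} to $u_+$ and to $u_-$ gives
\[
\sup_{D(x,r)}|u|\le C\Bigl(\fint_{D(x,2r)}u^2\Bigr)^{1/2}.
\]
Getting boundedness on all of $D(x,2r)$ needs a covering argument. For points with $\delta(Y)$ small, apply Lemma~\ref{lem:boundedness} at boundary points $z\in\Delta(x,3r)$ with radius comparable to $r/2$. For interior points, apply Lemma~\ref{lem:boundedness:int}. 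This gives a bound in terms of $\fint_{D(x,5r)}u^2$, which is finite, and that suffices for boundedness.

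Then apply \eqref{eqn:höld} with $\sigma\equiv0$, together with $\omega(r)\le 2\sup_{D(x,r)}|u|$. This gives \eqref{eqn:höld:1}.

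\textbf{Main obstacle.} The main point to get right is matching the scale restrictions of Lemma~\ref{lem:osc}, namely $5\rho\le\diam\Omega$ and the solution being defined in $D(x,5\rho)$, with the iteration lemma's requirement that the inequality hold for all $\rho\le R_0$. Everything else is routine.
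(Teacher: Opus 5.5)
Your proposal is correct and follows the paper's own argument. You iterate Lemma~\ref{lem:osc} with \cite[Lemma 8.23]{GilT01}, taking $r/2$ as the top scale so that boundedness on $D(x,2r)$ suffices, and use Lemma~\ref{lem:boundedness} for the zero-trace case. The only adjustment is in Step~3: the covering radius at boundary points must be at most about $r/4$ rather than $r/2$, so that the balls $B(z,4\rho)$ with $z$ near $\Delta(x,2r)$ stay inside $B(x,4r)$, where the equation holds.
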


\begin{cor}\label{cor:carl}
Let $\Omega\subset\R^n$ be a one-sided chord-arc domain with unbounded complement.
Let $A:\Omega\to \R^{n\times n}$ be uniformly elliptic, and let $\B$ be a vector field satisfying $\B\in \mathbf{\dot F^2}_\mu(\Omega)$ and $(\Div \B)_+\in \mathbf{\dot F}_{\delta}(\Omega)$, with $\mu>0$ and $0<\delta<\lambda$, where $\lambda$ is the ellipticity constant of $A$. Let $x\in\partial\Omega$, $0<7r\le\diam(\Omega)$, and let $u\in W^{1,2}(D(x,7r))$ be a nonnegative solution in $B(x,6r)$ with $\Tr u\equiv0$ on $\Delta(x,6r)$, in the sense of~\eqref{eqn:sol}. Then
\begin{equation}\label{eqn:hold:c}
u(X)\le C\Biggl(\frac{\delta(X)}{r}\Biggr)^\alpha\sup_{D(x,3r)}u,\qquad \forall\,X\in D(x,r),
\end{equation}
where $C$ depends only on $\mu$, $\delta$, $\lambda$, $n$, and the geometric constants of $\Omega$, and $\alpha$ is as in~\eqref{eqn:höld}.

Moreover, there exists a constant $K>1$, depending only on the geometric constants of $\Omega$ and on $n$, such that the following holds. If $u$ satisfies the assumptions above and is also a solution in the larger ball $B(x,Kr)$ with $\Tr u=0$ on $\Delta(x,Kr)$, then
\begin{equation}\label{eqn:carl}
u(X)\le C u(X_r),
\end{equation}
where $C$ depends only on $\mu$, $\delta$, $\lambda$, $n$, and the geometric constants of $\Omega$, and $X_r$ is a corkscrew point for $x$ at scale $r$.
\end{cor}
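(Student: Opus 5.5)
For \eqref{eqn:hold:c}, I will apply \eqref{eqn:höld} at boundary points of $\Delta(x,2r)$, combining zero boundary data with the nonnegativity of $u$. Fix $X\in D(x,r)$ and choose $z\in\partial\Omega$ with $|X-z|=\delta(X)<r$, so $z\in\Delta(x,2r)$. Then $B(z,4r)\subset B(x,6r)$, so $u$ is a solution in $B(z,4r)$ with $\Tr u\equiv0$ on $\Delta(z,4r)$. By the second part of Theorem~\ref{thm:h}, $u$ is bounded in $D(z,2r)$.

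If $\delta(X)\le r/2$, I will take $s=2\delta(X)$, so that $X\in D(z,s)$. Since $\Tr u\equiv0$ near $z$ and $u\ge0$, the Lebesgue-average definition of the trace gives $\inf_{D(z,s)}u=0$ in the essential sense (using continuity of $u$ from Lemma~\ref{lem:h:int}). Hence $u(X)\le\osc_{D(z,s)}u$. The boundary term in \eqref{eqn:höld} vanishes, so
\[
u(X)\le C(\delta(X)/r)^\alpha\osc_{D(z,r)}u\le C(\delta(X)/r)^\alpha\sup_{D(x,3r)}u,
\]
because $D(z,r)\subset D(x,3r)$. If $\delta(X)>r/2$, the bound is trivial: $u(X)\le\sup_{D(x,3r)}u\le 2^\alpha(\delta(X)/r)^\alpha\sup_{D(x,3r)}u$. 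The main subtlety in this part is being careful that \eqref{eqn:höld} uses essential oscillation and that the zero trace forces the essential infimum to be $0$. This is routine.

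For the Carleson estimate \eqref{eqn:carl}, I will run the classical argument of Caffarelli--Fabes--Mortola--Salsa / Jerison--Kenig via Harnack chains, with $K$ chosen large enough that every ball used below sits inside $B(x,Kr)$. Normalize $u(X_r)=1$. Using the Harnack inequality (Lemma~\ref{lem:h}) along Harnack chains (Definition~\ref{defn:ocd}(ii)), together with corkscrew points, I get the following: for $Y\in D(x,2r)$ with $\delta(Y)\ge 2^{-k}r$, there is a chain of length $\lesssim k$ to $X_r$, hence $u(Y)\le C_0^{k}=:(2^{k})^{\beta}$ for some $\beta$ depending on the allowed constants.

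Suppose, for contradiction, that $u(Y_0)>M$ for some $Y_0\in D(x,r)$ with $M$ large. Then $\delta(Y_0)\le 2^{-k_0}r$ with $2^{k_0\beta}\approx M$. I will apply \eqref{eqn:hold:c}, rescaled at the boundary point $z_0$ nearest $Y_0$ and at scale $\rho_0=\delta(Y_0)^{1/2}$-type intermediate radius; concretely, at scale $\rho_0=N_1\delta(Y_0)$ with $N_1$ large. This gives
\[
u(Y_0)\le C N_1^{-\alpha}\sup_{D(z_0,3\rho_0)}u.
\]
Choosing $N_1$ with $CN_1^{-\alpha}\le 1/2$ produces $Y_1\in D(z_0,3\rho_0)$ with $u(Y_1)\ge 2u(Y_0)$ and $|Y_1-Y_0|\lesssim N_1\delta(Y_0)$. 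Iterating yields a sequence with $u(Y_j)\ge 2^jM$. Its displacements are controlled because $u(Y_j)\ge 2^jM$ forces $\delta(Y_j)\lesssim r(2^jM)^{-1/\beta}$, so the jumps sum geometrically to $\lesssim rM^{-1/\beta}$. For $M$ large, all $Y_j$ therefore stay in $D(x,2r)$, the relevant balls stay in $B(x,Kr)$, and $u$ is unbounded there. This contradicts the boundedness of $u$ in $D(x,2r)$ from Theorem~\ref{thm:h}.

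The main obstacle is the bookkeeping: fixing $N_1$, $M$ and $K$ so that every application of \eqref{eqn:hold:c} takes place in a ball where $u$ solves with zero trace, and so that the geometric series of displacements keeps the points in $D(x,2r)$. The Harnack-chain bound $u(Y)\lesssim(r/\delta(Y))^\beta$ is standard given Lemma~\ref{lem:h} and the chord-arc geometry.
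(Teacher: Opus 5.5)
Your proposal is correct and follows the paper's route. For \eqref{eqn:hold:c} you apply boundary H\"older continuity at the nearest boundary point, combined with nonnegativity and zero trace. For \eqref{eqn:carl} you run the classical Caffarelli--Fabes--Mortola--Salsa Harnack-chain and iteration argument, which the paper only cites and you write out correctly. Your first part is in fact tidier than the paper's write-up: you treat $\delta(X)>r/2$ separately and stay inside $B(x,6r)$, so you land exactly on $\sup_{D(x,3r)}u$ as stated, whereas the paper's proof uses $K\ge 16$ and a supremum over $D(x,8r)$.
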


\begin{proof}
Let $X\in D(x,r)$. By the nonnegativity of $u$ and the fact that $\Tr u=0$ on $\Delta(x,Kr)$, if $K\ge 16$ we have $u(X)\le \osc_{D(z,2\delta(X))}u$, where $z\in \Delta(x,r+\delta(X))\subset\Delta(x,2r)$. Then, by~\eqref{eqn:höld:1},
\[
u(X)\le C\Biggl(\frac{\delta(X)}{r}\Biggr)^\alpha\osc_{D(z,2r)}u\le C\Biggl(\frac{\delta(X)}{r}\Biggr)^\alpha\sup_{D(x,8r)}u,
\]
which proves~\eqref{eqn:hold:c}. As noted in the proof of \cite[Lemma 3.11]{Hof26}, the proof of the analog of~\eqref{eqn:carl} in \cite[Theorem 1.1]{CafFMS81} can be modified to fit our setting, since the required ingredients are the existence of corkscrew points and Harnack chains, together with the application of~\eqref{eqn:hold:c} in small balls centered on the boundary. Therefore,~\eqref{eqn:carl} follows. The need for the scaling factor $K>1$ comes from the fact that Harnack chains connecting two points in $D(x,5r)$ whose distances from the boundary are comparable to $r$ may leave $D(x,5r)$, but may be chosen to remain in $D(x,Kr)$.
\end{proof}

\section{Harmonic measure}\label{sec:6}

We first show the validity of the maximum principle.

\begin{lem}\label{lem:max:princ}
Let $\Omega\subset\R^n$ be a one-sided chord-arc domain with unbounded complement.
Let $A:\Omega\to \R^{n\times n}$ be uniformly elliptic and let $\B$ be a vector field satisfying $\B\in \mathbf{\dot F^2}_\mu(\Omega)$ and $(\Div \B)_+\in \mathbf{\dot F}_{\delta}(\Omega)$, with $\mu>0$ and $0<\delta<\lambda$, where $\lambda$ is the ellipticity constant of $A$. Let $u\in Y^{1,2}(\Omega)$ be a supersolution, in the sense of Definition~\ref{def:sol}, and assume that $\Tr u\ge 0$ on $\partial\Omega$. Then $u\ge0$ in $\Omega$.
\end{lem}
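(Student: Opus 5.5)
We test the supersolution inequality with the negative part $v:=u_-=\max(-u,0)$ and then use the coercivity estimate from the proof of Lemma~\ref{lem:existence}. Since $u\in Y^{1,2}(\Omega)$, we have $v\in Y^{1,2}(\Omega)$ with $\nabla v=-\nabla u\,\mathds{1}_{\{u<0\}}$. Because $\Tr u\ge0$ on $\partial\Omega$, we get $\Tr v=(\Tr u)_-=0$. This trace identity should follow from the definition of the trace via averages, since $|s_--a_-|\le|s-a|$. Hence $v\in Y^{1,2}_0(\Omega)$ by Lemma~\ref{lem:dense}, and $v\ge0$.

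By Remark~\ref{rmk:sol}, the supersolution inequality extends to all nonnegative test functions in $Y^{1,2}_0(\Omega)$. We may therefore take $\varphi=v$, which gives
\[
\int_\Omega A\nabla u\cdot\nabla v+\int_\Omega\B\cdot\nabla u\,v\ge0 .
\]
On $\{v>0\}$ we have $\nabla u=-\nabla v$, and $\nabla v=0$ a.e. off this set. So the left-hand side equals $-\alpha(v,v)$, and we conclude that $\alpha(v,v)\le0$.

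The coercivity argument in the proof of Lemma~\ref{lem:existence} (through \eqref{eqn:div:trick} and $(\Div\B)_+\in\mathbf{\dot F^1}_\delta(\Omega)$) gives
\[
\alpha(v,v)\ge\Bigl(\lambda-\tfrac{\delta}{2}\Bigr)\|\nabla v\|_{L^2(\Omega)}^2 .
\]
Since $\delta<\lambda$, this forces $\nabla v=0$. Then the Sobolev inequality in Lemma~\ref{lem:dense} gives $\|v\|_{L^{2^*}(\Omega)}=0$, so $v=0$ a.e. and $u\ge0$ in $\Omega$.

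The main obstacle is the justification of the two facts used for $v$: that $\Tr(u_-)=0$ when $\Tr u\ge0$, and that $u_-\in Y^{1,2}_0(\Omega)$, so that it is an admissible test function and \eqref{eqn:div:trick} applies. Both should follow from the pointwise Lipschitz estimate above and the characterization of $Y^{1,2}_0(\Omega)$ in Lemma~\ref{lem:dense}. If the trace is only defined a.e. on $\partial\Omega$, the argument is unchanged, since it only needs $\Tr v=0$ $\sigma$-a.e.
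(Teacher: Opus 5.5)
Your proposal is correct and follows the paper's proof. Both test the supersolution inequality with $u_-$ and reduce to $\alpha(u_-,u_-)\le 0$; the paper writes this out as a chain of inequalities using ellipticity, the integration by parts \eqref{eqn:div:trick}, and the form bound on $(\Div\B)_+$, and you cite the coercivity from the proof of Lemma~\ref{lem:existence}. Both then use $\delta<\lambda$ to get $\nabla u_-=0$, hence $u_-=0$ by the zero trace; your justification that $\Tr(u_-)=(\Tr u)_-=0$ via $|s_--a_-|\le|s-a|$ is a detail the paper leaves implicit.
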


\begin{proof}
It is equivalent to proving that $u_-\equiv 0$ in $\Omega$. We take $u_-$ as a test function in the equation satisfied by $u$.
Using also the ellipticity of $A$ and the fact that $(\Div\B)_+\in \mathbf{\dot F^1}_\delta(\Omega)$, we get
\begin{align*}
-\lambda\int_\Omega|\nabla u_-|^2&\ge\int_{\Omega}A\nabla u\cdot\nabla u_-\ge -\int_\Omega \B\cdot\nabla u\, u_-=\int_\Omega \B\cdot\nabla u_-\, u_-\\
&\ge-\int_{\Omega}(\Div\B)_+\frac{u_-^2}{2}\ge-\frac{\delta}{2}\int_\Omega|\nabla u_-|^2.
\end{align*}
Since $0<\delta<\lambda$, we infer that $\nabla u_-\equiv 0$ in $\Omega$, which, together with the fact that $\Tr u_-\equiv 0$, implies $u_-\equiv0$.
\end{proof}

Since solutions obey the maximum principle, we can define a harmonic measure for the operator with drift $L=-\Div(A\nabla)+\B\cdot \nabla$. Using H\"older continuity estimates (see Lemma~\ref{lem:h:int} and Theorem~\ref{thm:h}), we also prove the solvability of the continuous Dirichlet problem.

\begin{lem}\label{lem:hm}
Let $\Omega\subset\R^n$ be a one-sided chord-arc domain with unbounded complement.
Let $A:\Omega\to \R^{n\times n}$ be uniformly elliptic and let $\B$ be a vector field satisfying $\B\in \mathbf{\dot F^2}_\mu(\Omega)$ and $(\Div \B)_+\in \mathbf{\dot F}_{\delta}(\Omega)$, with $\mu>0$ and $0<\delta<\lambda$, where $\lambda$ is the ellipticity constant of $A$. Set $L=-\Div(A\nabla)+\B\cdot\nabla$.

There exists a family of Borel probability measures $\{\omega^Y_{L}\}_{Y\in\Omega}$, supported on $\partial\Omega$, such that the following holds.
For any $f\in \textup{Lip}_c(\partial\Omega)$, the unique solution $u_f\in Y^{1,2}(\Omega)$ to the Dirichlet problem
\begin{equation}\label{eqn:dir}
\begin{cases}
L u = 0 & \text{in } \Omega, \\
u = f & \text{on}\, \partial\Omega,
\end{cases}
\end{equation}
given by Lemma~\ref{lem:existence}, is continuous up to the boundary and represented by
\begin{equation}\label{eqn:rep:h}
u_f(Y)=\int_{\partial\Omega}f\,d\omega^Y_L.
\end{equation}
Moreover, if $f\in C_c(\partial\Omega)$, there exists a solution $u_f\in Y^{1,2}_{\textup{loc}}(\Omega)\cap C(\overline{\Omega})$ to~\eqref{eqn:dir} that satisfies~\eqref{eqn:rep:h}.
We say that $\omega_L$ is the harmonic measure for $L$. Whenever it is clear from the context that we are referring to the operator $L$, we omit the subscript, writing $\omega_L^X=\omega^X$.
\end{lem}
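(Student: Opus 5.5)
The plan is the classical Riesz-representation construction: produce a positive, sup-norm-bounded linear functional on boundary data, extend it by density, and identify it with a measure. The steps are carried out in the order below.

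\emph{Step 1 (Lipschitz data).} For $f\in\textup{Lip}_c(\partial\Omega)$, extend $f$ to some $\tilde f\in\textup{Lip}_c(\R^n)$, for instance by McShane extension followed by a cutoff. Lemma~\ref{lem:existence} then gives a unique $u_f\in Y^{1,2}(\Omega)$. This solution depends only on $\Tr u_f=f$: the difference of two such solutions lies in $Y^{1,2}_0(\Omega)$ and solves $Lu=0$, so it vanishes by uniqueness. Hence $f\mapsto u_f(Y)$ is linear.

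Applying Lemma~\ref{lem:max:princ} to $u_f-\min f$ and to $\max f-u_f$ gives $\min f\le u_f\le\max f$. For $\min f$ we may use $\min(0,\min f)$, since constants solve $L1=0$ and $f$ has compact support, so it is not clear that $\min f$ itself is attained. This yields $\|u_f\|_{L^\infty(\Omega)}\le\|f\|_{L^\infty(\partial\Omega)}$, and also positivity: $f\ge0$ implies $u_f\ge0$.

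Next I would establish continuity up to the boundary. Interior continuity comes from Lemma~\ref{lem:h:int}. At a boundary point $x$, I would apply \eqref{eqn:höld} in Theorem~\ref{thm:h} with $r$ fixed. The first term on the right is bounded by $2\|f\|_\infty(s/r)^\alpha$, and the second by $C\,\osc_{\Delta(x,\sqrt{sr})}f\lesssim \|f\|_{\mathrm{Lip}}\sqrt{sr}$. Both tend to $0$ as $s\to0$, so $u_f$ extends continuously to $\overline\Omega$ with value $f$ on $\partial\Omega$. Here one must check that the trace in the Lebesgue-average sense agrees with the continuous extension, which is immediate once the oscillation tends to zero.

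\emph{Step 2 (the measure).} For fixed $Y$, the map $f\mapsto u_f(Y)$ is a positive linear functional on $\textup{Lip}_c(\partial\Omega)$ bounded by the sup norm. Since $\textup{Lip}_c(\partial\Omega)$ is dense in $C_c(\partial\Omega)$ in the uniform norm, with supports kept in a fixed compact set, the functional extends uniquely to a positive bounded functional on $C_c(\partial\Omega)$. Riesz representation then gives a Radon measure $\omega^Y$ on $\partial\Omega$ with $\omega^Y(\partial\Omega)\le1$, and \eqref{eqn:rep:h} holds for Lipschitz $f$.

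\emph{Step 3 (probability).} If $\Omega$ is bounded, we take $f\equiv1$ near $\partial\Omega$; then $u_f=1$, so $\omega^Y(\partial\Omega)=1$. In the unbounded case, take $f_R$ equal to $1$ on $\Delta(x_0,R)$ and $0$ outside $\Delta(x_0,2R)$. Then $1-u_{f_R}$ is a nonnegative solution vanishing on $\Delta(x_0,R)$. By \eqref{eqn:hold:c}, at fixed $Y$ we get $1-u_{f_R}(Y)\le C(\delta(Y)/R)^\alpha\to0$ as $R\to\infty$. Monotone convergence then gives total mass $1$. This step uses $\diam\partial\Omega=\infty$, which is exactly the assumption of unbounded complement.

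\emph{Step 4 (continuous data).} For $f\in C_c(\partial\Omega)$, take Lipschitz $f_k\to f$ uniformly with supports in a fixed compact set. By the maximum principle, $u_{f_k}$ is uniformly Cauchy on $\overline\Omega$, so its limit $u_f$ is continuous on $\overline\Omega$, equals $f$ on the boundary, and satisfies \eqref{eqn:rep:h}. To see that $u_f\in Y^{1,2}_{\rm loc}$ is a local solution, I would use interior Caccioppoli (Lemma~\ref{lem:Cacc:int}), applied to differences $u_{f_k}-u_{f_j}$ and to their negatives. This gives local $W^{1,2}$ convergence, so the weak formulation passes to the limit; the drift term converges because $\B\in L^2_{\rm loc}$ and $u_{f_k}\to u_f$ uniformly.

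I expect the main obstacles to be the boundary continuity in Step 1 and the total-mass argument in Step 3. Both rely on Theorem~\ref{thm:h} and Corollary~\ref{cor:carl}, and both require confirming that the global solution $u_f$ satisfies the local hypotheses of those results, namely boundedness and being a solution in the sense of \eqref{eqn:sol}.
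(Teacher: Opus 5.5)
Your proposal is correct and follows essentially the same route as the paper: the maximum principle gives $\|u_f\|_{L^\infty(\Omega)}\le\|f\|_\infty$ and positivity, density plus the Riesz representation theorem produce $\omega^Y$, Theorem~\ref{thm:h} gives continuity up to the boundary, and uniform limits together with the interior Caccioppoli inequality handle $f\in C_c(\partial\Omega)$. The total mass comes from a boundary H\"older estimate on surface balls of radius $R\to\infty$; you apply \eqref{eqn:hold:c} to $1-u_{f_R}$, while the paper applies the oscillation estimate of Theorem~\ref{thm:h} directly to $u_{g_i}$, which amounts to the same thing. One small technical remark: when $\Omega$ is unbounded a nonzero constant is not in $Y^{1,2}(\Omega)$, so rather than applying Lemma~\ref{lem:max:princ} literally to $u_f-\min f$ or $\max f-u_f$, you should test the equation for $u_f$ (respectively $-u_f$) with $(u_f-\|f\|_\infty)_+$ (respectively $(-u_f-\|f\|_\infty)_+$), which lie in $Y^{1,2}_0(\Omega)$, exactly as in the proof of that lemma.
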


\begin{proof}
Let $f\in \textup{Lip}_c(\partial\Omega)$ and let $u_f$ be the solution to~\eqref{eqn:dir}. Applying  Lemma~\ref{lem:max:princ} to $(u_f-\|f\|_\infty)_+$ and $(-u_f-\|f\|_\infty)_+$, we find that $\|u_f\|_{L^\infty(\Omega)}\le\|f\|_\infty$. Therefore, for any $X\in\Omega$, the map $\textup{Lip}_c(\partial\Omega)\ni f\to u_f(X)$ is linear, positive, and continuous. By density of $\textup{Lip}_c(\partial\Omega)$ in $C_0(\partial\Omega)$, we extend the functional $\textup{Lip}_c(\partial\Omega)\ni f\to u_f(X)$ to a functional $T^X$ defined on $C_0(\partial\Omega)$. Then, by the Riesz representation theorem, there exists a family of nonnegative Borel measures $\{\omega^X\}_{X\in\Omega}$ such that
\begin{equation*}
T^X(f)=\int_{\partial\Omega}f\,d\omega^X,\quad X\in\Omega,\ f\in C_0(\partial\Omega).
\end{equation*}
In particular,~\eqref{eqn:rep:h} holds whenever $f\in\textup{Lip}_c(\partial\Omega)$, since in this case $T^X(f)=u_f(X)$. Since $\|T^X\|\le 1$, also $\omega^X(\partial\Omega)=\|T^X\|\le 1$.

Let now $f\in C_c(\partial\Omega)$ and let $\{f_i\}_{i\in\N}$ be a sequence of Lipschitz compactly supported functions converging uniformly to $f$ in $C(\partial\Omega)$. By Lemma~\ref{lem:h:int} and Theorem~\ref{thm:h}, $u_{f_i}\in C(\overline\Omega)$, and by the maximum principle
\[
\|u_{f_i}-u_{f_j}\|_{C(\overline\Omega)}
=\|u_{f_i-f_j}\|_{C(\overline{\Omega})}
\le\|f_i-f_j\|_\infty.
\]
This proves that $\{u_{f_i}\}_{i\in\N}$ is a Cauchy sequence in $C_b(\overline\Omega)$. Thus, there is a function $u\in C_b(\overline\Omega)$ such that $u_{f_i}$ converges uniformly to $u$ in $\overline\Omega$. In particular, $\|u\|_{L^\infty(\Omega)}\le\|f\|_{\infty}$ and $u|_{\partial\Omega}=f$. By the Caccioppoli inequality (see~\eqref{c_ineq:int}), for any ball $B$ such that $2B\subset\Omega$, we have
\[
\int_B|\nabla u_{f_i}-\nabla u_{f_j}|^2
=\int_B|\nabla u_{f_i-f_j}|^2
\le C_B\|u_{f_i-f_j}\|_{L^\infty(\Omega)}^2.
\]
Thus, $u_{f_i}$ converges in $W^{1,2}_{\textup{loc}}(\Omega)$ to some function $v$, and, by uniqueness of the limit, $u=v$. Therefore, taking the limit in the equation satisfied by $u_{f_i}$ (see~\eqref{eqn:sol:F}), we find that $u$ is a solution to $Lu=0$ (see Remark~\ref{rmk:sol}). Finally, $
\lim_{i\to\infty}\int_{\partial\Omega}f_i\,d\omega^X
=\int_{\partial\Omega}f\,d\omega^X,
$
extending the representation formula~\eqref{eqn:rep:h} to $u$.

We are left with proving that $\omega^X(\partial\Omega)\ge 1$ for all $X\in\Omega$. If $\Omega$ is bounded, then $f=1\in \textup{Lip}_c(\partial\Omega)$ and $u_f=1$. By~\eqref{eqn:rep:h}, $\omega^X(\partial\Omega)=1$. Suppose now that $\Omega$ is unbounded. Fix $X\in\Omega$, take $z\in\partial\Omega$, and let $\{g_i\}_{i\in\N}\subset \textup{Lip}_c(\partial\Omega)$ be a sequence such that $0\le g_i\le 1$, $g_i=1$ in $\Delta(z,i)$, and $g_i=0$ in $\partial\Omega\setminus \Delta(z,2i)$. Then, for $i$ large enough,~\eqref{eqn:höld}, together with $0\le u_{g_i}\le 1$, gives
\[
|u_{g_i}(X)-1|
=|u_{g_i}(X)-u_{g_i}(z)|
\le\osc_{D(z,|z-X|)}u_{g_i}
\le C\Biggl(\frac{|z-X|}{i}\Biggr)^\alpha\osc_{D(z,i)}u_{g_i}
\le C\Biggl(\frac{|z-X|}{i}\Biggr)^\alpha.
\]
Taking the limit as $i\to\infty$, we find $\lim_{i\to\infty}u_{g_i}(X)=1$. Since $g_i\to1$ pointwise on $\partial\Omega$,~\eqref{eqn:rep:h} and the dominated convergence theorem give $\omega^X(\partial\Omega)=1$.
\end{proof}

We now show that Bourgain's estimate also holds in our setting.

\begin{lem}\label{lem:bourgain}
Let $\Omega\subset\R^n$ be a one-sided chord-arc domain with unbounded complement.
Let $A:\Omega\to \R^{n\times n}$ be uniformly elliptic and let $\B$ be a vector field satisfying $\B\in \mathbf{\dot F^2}_\mu(\Omega)$ and $(\Div \B)_+\in \mathbf{\dot F}_{\delta}(\Omega)$, with $\mu>0$ and $0<\delta<\lambda$, where $\lambda$ is the ellipticity constant of $A$. Set $L=-\Div(A\nabla)+\B\cdot\nabla$. There exists $c\in (0,1)$, depending only on $\mu$, $\delta$, $\lambda$, $n$, and the geometric constants of $\Omega$, such that, for any $x\in\partial\Omega$, $0<2r\le\diam(\Omega)$, and $X\in D(x,r/4)$, we have
\begin{equation}\label{eqn:bourgain}
\omega^X(\Delta(x,r))\ge c.
\end{equation}
\end{lem}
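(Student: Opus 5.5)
We will prove Bourgain's estimate by comparing the harmonic measure of $\Delta(x,r)$ with a solution whose boundary data is a Lipschitz bump, and then invoking the boundary density Lemma~\ref{lem:density}.

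Fix $x\in\partial\Omega$, $0<2r\le\diam(\Omega)$, and choose $f\in\textup{Lip}_c(\partial\Omega)$ (the restriction of a Lipschitz function on $\R^n$) with $0\le f\le 1$, $f\equiv1$ on $\Delta(x,r/2)$, and $\supp f\subset\overline{\Delta(x,r)}$. Let $u_f\in Y^{1,2}(\Omega)$ be the solution of the Dirichlet problem given by Lemma~\ref{lem:existence}. By the representation formula~\eqref{eqn:rep:h} and positivity of $\omega^X$, we have
\[
\omega^X(\Delta(x,r))\ge\int_{\partial\Omega}f\,d\omega^X=u_f(X).
\]
So it suffices to bound $u_f$ from below by a constant on $D(x,r/4)$.

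By the maximum principle (Lemma~\ref{lem:max:princ}), $u_f\ge0$ in $\Omega$, since $\Tr u_f=f\ge0$. Moreover, $u_f$ is a solution in the whole of $\Omega$, hence in particular a nonnegative supersolution in $B(x,4\rho)$ in the sense of~\eqref{eqn:supersol}, for $\rho=r/8$. To check this, test functions $\varphi\in Y^{1,2}(\Omega)$ with zero trace on $\Delta(x,4\rho)$ and vanishing outside $B(x,\rho')$ belong to $Y^{1,2}_0(\Omega)$, so Remark~\ref{rmk:sol} applies. Also $u_f\in W^{1,2}(D(x,5\rho))$, because $\nabla u_f\in L^2$ and $u_f\in L^{2^*}$ on a bounded set, and $\Tr u_f=f\equiv1$ on $\Delta(x,4\rho)=\Delta(x,r/2)$. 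The scale condition $5\rho\le\diam(\Omega)$ holds because $5r/8<2r\le\diam(\Omega)$.

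Lemma~\ref{lem:density} then gives $\inf_{D(x,\rho)}u_f\ge c$ with $\rho=r/8$. This only covers $D(x,r/8)$, while the statement needs $D(x,r/4)$. To close the gap, we simply rescale the choice: take $f\equiv1$ on $\Delta(x,r)$ instead? That would require support larger than $\Delta(x,r)$. So instead we keep $\supp f\subset\overline{\Delta(x,r)}$ with $f\equiv1$ on $\Delta(x,r')$ for $r'$ close to $r$, say $r'=15r/16$. We then cover: for $X\in D(x,r/4)$, either $\delta(X)$ is small or it is comparable to $r$.
\begin{itemize}
\item[(a)] If $\delta(X)<r/16$, pick $z\in\partial\Omega$ with $|X-z|=\delta(X)$, so $z\in\Delta(x,5r/16)$. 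Apply Lemma~\ref{lem:density} centered at $z$ with radius $\rho=r/8$: this uses $\Delta(z,r/2)\subset\Delta(x,13r/16)$, where $f\equiv1$, and $X\in D(z,r/8)$.
\item[(b)] If $\delta(X)\ge r/16$, connect $X$ to a point $Y\in D(z',r/8)$ with $\delta(Y)\sim r$, for some $z'\in\Delta(x,r/4)$, through a Harnack chain of bounded length. This uses the corkscrew and Harnack chain conditions and Lemma~\ref{lem:h}, applied to the nonnegative solution $u_f$ in all of $\Omega$. The lower bound at $Y$ then propagates to $X$ with a constant depending only on the allowed parameters.
\end{itemize}
The main obstacle is this bookkeeping of radii and the interior Harnack step. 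Everything else is a direct application of Lemmas~\ref{lem:max:princ} and~\ref{lem:density}, whose boundary lower bound already encodes the drift difficulties.
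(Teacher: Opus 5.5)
Your argument is correct, but it takes a longer route than the paper's. The paper never fixes a single bump. It takes $f_i\in C^\infty_c(\R^n)$ with $0\le f_i\le 1$, $f_i=1$ on $B(x,r(1-2^{-i}))$ and $f_i=0$ outside $B(x,r)$. It then applies Lemma~\ref{lem:density} once, centered at $x$ with $4\rho_i=r(1-2^{-i})$, to get $u_{f_i}\ge c$ on $D(x,r(1-2^{-i})/4)$. These sets exhaust $D(x,r/4)$, and $\omega^X(\Delta(x,r))\ge u_{f_i}(X)$ for every $i$, so any fixed $X\in D(x,r/4)$ is handled by choosing $i$ large. In other words, the radius bookkeeping you flagged as the main obstacle disappears once the boundary datum is allowed to depend on $X$. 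No interior Harnack inequality, corkscrew points, or Harnack chains are needed.

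Your version uses a fixed datum equal to $1$ on $\Delta(x,15r/16)$. You recenter Lemma~\ref{lem:density} at the nearest boundary point when $\delta(X)<r/16$, and use a bounded Harnack chain, via Lemma~\ref{lem:h} applied to the globally nonnegative solution $u_f$, when $\delta(X)\ge r/16$. With the radii you chose, this checks out. It also gives the slightly stronger statement that a single solution $u_f$ is bounded below on all of $D(x,r/4)$. The price is extra machinery, and the constant now also depends on the corkscrew and Harnack chain constants, which is allowed.

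One small point to tidy: you need $\int f\,d\omega^X\le\omega^X(\Delta(x,r))$, which requires $f=0$ on $\partial\Omega\setminus\Delta(x,r)$. A support condition $\supp f\subset\overline{\Delta(x,r)}$ taken relative to $\partial\Omega$ does not force this by itself. Require instead that $f$ be the restriction of a Lipschitz function on $\R^n$ vanishing outside $B(x,r)$, as the paper does.
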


\begin{proof}
 Let $i\in\N$ and let $f_i\in C^\infty_c(\R^n)$ be such that $0\le f_i\le1$, $f_i=1$ in $B(x,r(1-2^{-i}))$, and $f_i=0$ outside $B(x,r)$. Let $u_{f_i}$ be the solution to~\eqref{eqn:dir} with $f=f_i$. By~\eqref{eqn:density},
\[
u_{f_i}(X)\ge c,\qquad \forall X\in D(x,r(1-2^{-i})/4),
\]
where $c$ depends only on $\mu$, $\delta$, $\lambda$, $n$, and the geometric constants of $\Omega$. Taking the limit as $i\to\infty$, and using that
\[
u_{f_i}(X)=\int_{\Delta(x,r)}f_i\,d\omega^X,
\]
we obtain~\eqref{eqn:bourgain}.
\end{proof}

 \section{Green's function}\label{sec:7}
We now show existence and properties of the Green's function for the operator $L=-\Div(A\nabla)+\B\cdot\nabla$. In doing so, we need the more restrictive assumption
\[
\Div\B\le 0.
\]
More precisely, for the Green's function we construct to satisfy the corresponding distributional identity with a Dirac mass at the pole, we will assume either that $\B\in L^{n,1}_{\textup{loc}}(\Omega)$ or that $\Div\B=0$. Without one of these two assumptions, we were unable to make sense of the integral in~\eqref{eqn:green}. In the first case, the estimates we show won't depend on any Lorentz norm of $\B$.

Finally, to have the appropriate lower bound for the Green's function, we will need to assume
\[
\Div\B=0.
\]
The need for more restrictive assumptions on $\Div\B$ is also present in \cite{LisZ04}, where the authors proved two-sided Gaussian bounds for the heat kernel of the parabolic operator $\partial_t-\Delta+\B\cdot\nabla$. For the upper bound, the authors assumed a global form-boundedness condition on $\Div\B$ and an additional smallness condition on $(\Div\B)_+$. Whereas for the lower bound, they assumed $\Div\B=0$. See also \cite{KinS22} for two-sided Gaussian bounds of the heat kernel with (multiplicative) form-bounded drift $\B$ and form-bounded divergence $\Div\B$ satisfying some other smallness condition. In the elliptic setting, Green's functions for operators with singular drifts were studied in \cite{KimS19}, \cite{Mou23}, \cite{Sak21}, \cite{Pat24}, and \cite{Hof26}. The need for the condition $\B\in L^{n,1}_{\textup{loc}}(\Omega)$ is exploited in~\cite{Sak21}, where the author shows an example of a vector field $\B$ that violates such a condition for which there cannot exist a Green's function for the operator $-\Delta+\B\cdot\nabla$ that satisfies both~\eqref{eqn:green} and~\eqref{eqn:green:sup}. In \cite{Hof26}, the author proves two-sided local bounds for the Green's function under the smallness condition~\eqref{eqn:small}. In \cite{Pat24}, the author shows that, if the constant $\epsilon$ in~\eqref{eqn:small} is not small, the local upper bound for the Green's function can fail. Moreover, the drift $\B$ constructed in \cite{Pat24} satisfies $\Div\B>0$. By the condition $\Div\B\le 0$, we can guarantee that the upper bound holds.
\begin{thm}\label{thm:green}
Let $\Omega\subset\R^n$ be a one-sided chord-arc domain with unbounded complement.
Let $A:\Omega\to \R^{n\times n}$ be uniformly elliptic with ellipticity constant $\lambda$ and let $\B$ be a vector field satisfying $\B\in \mathbf{\dot F^2}_\mu(\Omega)$, for some $0<\mu<\infty$, and $\Div\B\le 0$ in the weak sense.
Denote by $L$ the operator given by $L=-\Div(A\nabla)+\B\cdot\nabla$. 

Then, for any $Y\in\Omega$, there is a nonnegative function $G^Y_L=G^Y$ such that, for all $\epsilon>0$, $G^Y\in C(\overline{\Omega}\setminus\{Y\})\cap Y^{1,2}(\Omega\setminus B(Y,\epsilon))$, such that $G^Y(x)=0$ for all $x\in\partial\Omega$, and that has the following properties. 

$G^Y$ is a solution to $LG^Y=0$ in $\Omega\setminus\{Y\}$, that is,

\begin{equation}\label{eqn:green:sol:away}
\int_{\Omega}A\nabla G^Y\cdot\nabla\varphi+\int_{\Omega}\B\cdot\nabla G^Y\varphi=0,\qquad\forall\varphi\in C^\infty_c(\Omega\setminus{Y}).
\end{equation}
$G^Y$ satisfies the upper bound
\begin{equation}\label{eqn:green:sup}
0\le G^Y(X)\le C|X-Y|^{2-n},\qquad\forall X\in\Omega\setminus\{Y\},
\end{equation}
and the gradient bounds
\begin{equation}\label{eqn:weak:green}
\nabla G^Y\in L^{\frac{n}{n-1},\infty}(\Omega),
\end{equation}
\begin{equation}\label{eqn:green:grad}
\int_{\Omega\setminus B(Y,R)}|\nabla G^Y|^2\le C R^{2-n},\qquad\forall R>0,
\end{equation}
where $C$ is a constant depending only on $n$, $\mu$, $\lambda$, and the geometric constants of $\Omega$.
\begin{equation}\label{eqn:green:grad:1}
\int_{B(Y,R)\cap\Omega}|\nabla G^Y|^q\le C R^{n-q(n-1)},\qquad\forall 1\le q<\frac{n}{n-1},
\end{equation}
where $C$ is a constant depending only on $n$, $\mu$, $\lambda$, $q$, and the geometric constants of $\Omega$.

Let $L^*$ be the adjoint of the operator $L=-\Div(A\nabla)+\B\cdot\nabla$, let $F\in L^\infty_c(\overline{\Omega})$, and let $u^*\in Y^{1,2}_0(\Omega)$ be the solution to $L^*u=F$ (see Remark~\ref{rmk:adjoint}). Then,
\begin{equation}\label{eqn:rep}
u^*(Y)=\int_{\Omega}G^Y(X)F(X)\,dX,
\end{equation}
for almost every $Y\in\Omega$.

Moreover, if $\B\in L^{n,1}_{loc}(\Omega)$, then 
\begin{equation}\label{eqn:green}
\int_{\Omega}A\nabla G^Y\cdot\nabla\varphi+\int_{\Omega}\B\cdot\nabla G^Y\varphi=\varphi(Y),\qquad\forall\varphi\in C^\infty_c(\Omega).
\end{equation}
If instead $\Div\B=0$, then 
\begin{equation}\label{eqn:green:divfree}
\int_{\Omega}A\nabla G^Y\cdot\nabla\varphi-\int_{\Omega}\B\cdot\nabla\varphi G^Y=\varphi(Y),\qquad\forall\varphi\in C^\infty_c(\Omega).
\end{equation}

If $\Div\B=0$, $G^Y$ has additional properties. First, the following local lower bound holds.
\begin{equation}\label{eqn:green:inf}
G^Y(X)\ge C^{-1}|X-Y|^{2-n}, \qquad\forall X\ne Y:|X-Y|<\delta(Y)/2,
\end{equation}
where $C$ is a constant depending only on $n$, $\mu$, $\lambda$, and the geometric constants of $\Omega$.

Moreover, for any $\Phi\in \textup{Lip}_c(\R^n)$ let $u_{\Phi}$ be the solution to $Lu_\Phi=0$ in $\Omega$ satisfying $\Tr u_\Phi=\Phi$ on $\partial\Omega$. Then, for all $X\in\Omega$, we have
\begin{equation}\label{eqn:rep:1}
u_\Phi(X)-\Phi(X)=-\int_{\Omega}A^T\nabla G^X_{L^*}\nabla \Phi-\int_\Omega\B\cdot\nabla \Phi G^X_{L^*}.
\end{equation}
Finally, for any $X\ne Y$,
\begin{equation}\label{eqn:sym}
G^Y_L(X)=G^X_{L^*}(Y).
\end{equation}
\end{thm}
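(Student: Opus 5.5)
We follow the Grüter--Widman approach, adapted to the drift as in \cite{KimS19, Sak21, Hof26}. The key structural fact is that $\Div\B\le0$ gives $\beta(u,u)\ge0$ for $u\in Y^{1,2}_0(\Omega)$ by~\eqref{eqn:beta:0}, and $\Div(-\B)\ge0$ is not needed for $L^*$ because $\beta(u,u)$ is the same quantity. Hence both $L$ and $L^*$ are coercive with constant $\lambda$ independent of $\mu$, and the theory of Sections~\ref{sec:4}--\ref{sec:6} applies to both.

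\textbf{Approximate Green's functions.} For $\rho>0$ let $G^Y_\rho\in Y^{1,2}_0(\Omega)$ solve $\alpha(G^Y_\rho,\varphi)=\fint_{B(Y,\rho)\cap\Omega}\varphi$ via Lemma~\ref{lem:existence}. The functional is in $Y^{-1,2}_0$ by Sobolev. By Lemma~\ref{lem:max:princ}, $G^Y_\rho\ge0$.

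\textbf{Uniform bounds.} Following Grüter--Widman, test the adjoint problem. Fix $X$ with $|X-Y|=R$ and a datum $F$ supported in $B(X,R/4)$, with $v\in Y^{1,2}_0$ solving $L^*v=F$. Then $\int G^Y_\rho F=\fint_{B(Y,\rho)}v$. Local boundedness (Lemmas~\ref{lem:boundedness:int},~\ref{lem:boundedness} applied to $L^*$, whose drift $-\B$ is form-bounded with zero positive divergence part) and the energy estimate $\|\nabla v\|_2\le\lambda^{-1}\|F\|_{Y^{-1,2}_0}$ give $|v|\lesssim R^{2-n/2}\|F\|_{L^{2n/(n+2)}}$ near $Y$. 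Duality then yields $\|G^Y_\rho\|_{L^{2n/(n-2)}(B(X,R/4))}\lesssim R^{2-n/2}$. Combining this with local boundedness of $G^Y_\rho$ (a solution away from $B(Y,\rho)$) gives~\eqref{eqn:green:sup} uniformly in $\rho<R/8$.

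For~\eqref{eqn:green:grad}, use Caccioppoli (Lemmas~\ref{lem:Cacc:int},~\ref{lem:Cacc}) on annuli. A cleaner route tests the equation with $(G_\rho-t)_+$ for $t>0$ large; coercivity plus $\beta\ge0$ gives $\lambda\int|\nabla (G_\rho\wedge t)|^2\le t$, and hence the weak-$L^{n/(n-2)}$ bound for $G_\rho$. Similarly $\nabla G_\rho\in L^{n/(n-1),\infty}$ uniformly, which gives~\eqref{eqn:weak:green} and~\eqref{eqn:green:grad:1}.

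\textbf{Passage to the limit.} Weak compactness in $Y^{1,2}(\Omega\setminus B(Y,\epsilon))$ and in $W^{1,q}$, together with the uniform Hölder estimates (Lemma~\ref{lem:h:int}, Theorem~\ref{thm:h}, Corollary~\ref{cor:carl}), give locally uniform convergence of a subsequence to $G^Y\in C(\overline\Omega\setminus\{Y\})$ with zero boundary values and~\eqref{eqn:green:sol:away}. Formula~\eqref{eqn:rep} follows from $\int G^Y_\rho F=\fint_{B(Y,\rho)}u^*$ and Lebesgue differentiation. A diagonal argument over a countable dense set of poles, with continuity in $Y$ from Hölder continuity of $u^*$, produces a family defined for every $Y$.

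\textbf{Distributional identity: the main obstacle.} The term $\int\B\cdot\nabla G_\rho\varphi$ must pass to the limit near the pole, where $\nabla G\in L^{n/(n-1),\infty}$ only. If $\B\in L^{n,1}_{\rm loc}$, Lorentz duality ($L^{n,1}$ against $L^{n/(n-1),\infty}$) gives uniform integrability, which yields~\eqref{eqn:green}. If $\Div\B=0$, we first rewrite $\int\B\cdot\nabla G_\rho\varphi=-\int\B\cdot\nabla\varphi\,G_\rho$, which is justified for $G_\rho\in Y^{1,2}_0$. We then pass to the limit using $|\B||\nabla\varphi|\in L^2_{\rm loc}$ and $G_\rho\to G$ strongly in $L^2_{\rm loc}$, which holds since $G\in L^{n/(n-2),\infty}$ with $n/(n-2)$ possibly below $2$. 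We expect this step to be delicate. The plan is to split $\int\B\cdot\nabla\varphi\,G_\rho$ into $B(Y,s)$ and its complement, and to control the piece on $B(Y,s)$ by form-boundedness applied to the truncations $G_\rho\wedge t$ plus a tail bound.

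\textbf{Properties when $\Div\B=0$.} For the lower bound~\eqref{eqn:green:inf}, test~\eqref{eqn:green:divfree} with a cutoff $\varphi$ equal to $1$ on $B(Y,r)$ and supported in $B(Y,2r)$, where $r=|X-Y|$. This gives $1\le\int_{\text{annulus}}(|\nabla G||\nabla\varphi|+|\B||\nabla\varphi|G)$. Using Cauchy--Schwarz and form-boundedness of $\B$ against $G\eta$, then Caccioppoli, we get $1\lesssim r^{n-2}\sup_{\text{annulus}}G$. The Harnack inequality (Lemma~\ref{lem:h}) on the annulus, reached by chains, then gives the bound at $X$.

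For~\eqref{eqn:rep:1}, apply~\eqref{eqn:rep} for $L^*$ to $u_\Phi-\Phi\in Y^{1,2}_0$, which solves $L(u_\Phi-\Phi)=-L\Phi$, and approximate $-L\Phi$ by $L^\infty_c$ data. Continuity then upgrades the identity to every $X$.

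For~\eqref{eqn:sym}, compute $\int G^Y_{L,\rho}\,\fint_{B(X,\sigma)}=\int G^X_{L^*,\sigma}\fint_{B(Y,\rho)}$ by testing each approximate equation with the other. Letting $\rho,\sigma\to0$ and using continuity away from the poles gives the identity.
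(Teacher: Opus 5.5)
Your overall architecture matches the paper's. You build approximate Green functions $g^Y_\rho$ by Lax--Milgram, combine uniform weak-type bounds with local boundedness of $g^Y_\rho$ off the pole, pass to the limit by compactness, use Lorentz duality when $\B\in L^{n,1}_{\mathrm{loc}}$, rewrite the drift term when $\Div\B=0$, and get symmetry by cross-testing approximate Green functions.

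\textbf{The adjoint $L^*$ is not covered by the local theory.} Your first paragraph contains a false claim that you use twice. Coercivity does transfer to $L^*$, but the local theory of Sections~\ref{sec:4}--\ref{sec:6} does not. If $\Div\B\le0$ and $\Div\B\neq0$, then
$L^*u=-\Div(A^T\nabla u+\B u)=-\Div(A^T\nabla u)-\B\cdot\nabla u-(\Div\B)u$.
This operator contains a zeroth-order term given by the nonnegative measure $-\Div\B$. Its drift $-\B$ has $(\Div(-\B))_+=-\Div\B$, which is neither zero nor assumed form-bounded. So Lemmas~\ref{lem:boundedness:int} and~\ref{lem:boundedness} cannot be invoked for $L^*$. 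Consequently neither your duality route to \eqref{eqn:green:sup} nor the ``H\"older continuity of $u^*$'' in your diagonal argument is justified. The paper uses $L^*$ only once $\Div\B=0$.

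The repair is already in your text. Your truncation route uses only the $L$-theory for $g^Y_\rho$ itself, just as the paper's argument with $[\log(g^Y_\rho/t)]_+$ does. The diagonal argument is unnecessary, because \eqref{eqn:rep} holds at every Lebesgue point $Y$ of $u^*$ along the subsequence defining $G^Y$. Two corrections to that route:
\begin{itemize}
\item You must test with $g^Y_\rho\wedge t$, not $(g^Y_\rho-t)_+$.
\item The drift term $\int\B\cdot\nabla g^Y_\rho\,(g^Y_\rho\wedge t)$ is not of the form $\beta(u,u)$, so ``$\beta\ge0$'' does not give its sign directly. You need the paper's square-root device: the term equals $2\beta(\psi,\psi)\ge0$, where $\psi=\sqrt{H(g^Y_\rho)}$ and $H(s)=\int_0^s\min(\sigma,t)\,d\sigma$. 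Here $\psi\in Y^{1,2}_0(\Omega)$ because $\sqrt H$ is $2^{-1/2}$-Lipschitz with $H(0)=0$.
\end{itemize}

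\textbf{The formula \eqref{eqn:rep:1} is not obtained at every $X$.} The statement asserts it for every $X$, and your route does not give that. Applying \eqref{eqn:rep} for $L^*$ to $u_\Phi-\Phi$ first needs a representation formula for the datum $\Div(A\nabla\Phi)$, which is not a function when $A$ is merely measurable. Even then it yields only an a.e.\ identity. ``Continuity then upgrades'' presupposes continuity in $X$ of $\int A^T\nabla G^X_{L^*}\cdot\nabla\Phi+\int\B\cdot\nabla\Phi\,G^X_{L^*}$, and nothing in your argument provides that. The paper instead cross-tests $g^X_{\rho,L^*}$ against $u_\Phi-\Phi$, obtaining
$\fint_{B(X,\rho)}(u_\Phi-\Phi)=-\int A^T\nabla g^X_{\rho,L^*}\cdot\nabla\Phi-\int\B\cdot\nabla\Phi\,g^X_{\rho,L^*}$,
and lets $\rho\to0$. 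The left side converges at every $X$ because $u_\Phi-\Phi$ is continuous.

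\textbf{The limit near the pole.} Your sentence on strong $L^2_{\mathrm{loc}}$ convergence is wrong for $n\ge4$: there $n/(n-2)\le2$ and $G^Y\notin L^2$ near $Y$. Your splitting plan does work, and it supplies a detail the paper leaves implicit.
\begin{itemize}
\item Form-boundedness with a cutoff gives $\|\B\|_{L^1(B(Y,r))}\lesssim r^{n-1}$ for $r<\delta(Y)/2$.
\item On dyadic annuli where $g^Y_\rho\lesssim|X-Y|^{2-n}$, each annulus contributes $O(r)$, and the sum is $O(s)$.
\item On $B(Y,8\rho)$, use $\|g^Y_\rho\|_{L^{2^*}(\Omega)}\lesssim\rho^{1-n/2}$, which gives a contribution $O(\rho)$.
\end{itemize}
Together these give $\int_{B(Y,s)}|\B|\,g^Y_\rho\lesssim s$ uniformly in $\rho<s/8$.

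Your lower-bound argument (form-boundedness applied to $G^Y\eta$, then Caccioppoli and Harnack) is a valid variant of the paper's, which applies form-boundedness to $\partial_i\varphi$ instead.
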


\begin{proof}
We adapt the arguments in \cite{GruW82} to our setting.
Fix $Y\in\Omega$. For any $0<\rho<\delta(Y)/20$, let $g_\rho^Y$ be the unique solution of~\eqref{eqn:sol:F}, with $\langle F,\varphi\rangle=\fint_{B(Y,\rho)}\varphi$ for any $\varphi\in Y^{1,2}_0(\Omega)$, i.e.,
\begin{equation}\label{eqn:sol:grho}
\int_{\Omega}A\nabla g^Y_\rho\cdot\nabla\varphi+\int_{\Omega}\B\cdot\nabla g^Y_\rho\varphi=\fint_{B(Y,\rho)}\varphi,
\quad\forall\varphi\in Y^{1,2}_0(\Omega).
\end{equation}
The existence and uniqueness of $g^Y_\rho$ are provided by Lemma~\ref{lem:existence}, since
\[
\left|\fint_{B(Y,\rho)}\varphi\right|\le C\|\varphi\|_{Y^{1,2}(\Omega)}\rho^{1-\frac{n}{2}}.
\]
Since $g^Y_\rho$ is a supersolution with zero trace, then $g^Y_\rho\ge 0$ by Lemma~\ref{lem:max:princ}.
Set $\Omega_t:=\{g^Y_\rho>t\}$. On $\Omega_{2t}$ we have $\log\Bigl(\frac{g^Y_\rho}{t}\Bigr)\ge\log(2)$. Thus, setting $w_t:=\Bigl[\log\Bigl(\frac{g^Y_\rho}{t}\Bigr)\Bigr]_+\in Y^{1,2}_0(\Omega)$,
\begin{equation}\label{eqn:meas:g}
|\Omega_{2t}|\le C\int_{\Omega_{2t}}w_t^{\frac{2n}{n-2}}\le C\biggl(\int_{\Omega}|\nabla w_t|^2\biggr)^{\frac{n}{n-2}}
\end{equation}
Using the ellipticity of $A$ and the fact that $w_t=0$ in $\Omega\setminus\Omega_t$ we get
\begin{equation*}
\int_{\Omega}|\nabla w_t|^2\le C\int_{\Omega_t}A\nabla g^Y_\rho\nabla v_t,
\end{equation*}
where $v_t:=\Bigl(\frac{1}{t}-\frac{1}{g^Y_\rho}\Bigr)_+$.
Using $v_t$ as a test function in~\eqref{eqn:sol:grho} we obtain
\begin{equation}\label{eqn:meas:g:1}
\int_{\Omega}|\nabla w_t|^2\le C\fint_{B(Y,\rho)}v_t-C\int_{\Omega}\B\cdot\nabla g^Y_\rho v_t
\le Ct^{-1}-C\int_{\Omega}\B\cdot\nabla(g^Y_\rho v_t-w_t),
\end{equation}
where we used that $0\le v_t\le t^{-1}$ and that $\nabla(g^Y_\rho v_t-w_t)=\nabla g^Y_\rho v_t$. We now claim that $-\int_{\Omega}\B\cdot\nabla(g^Y_\rho v_t-w_t)\le0$.
Notice that, even though $0\le g^Y_\rho v_t-w_t\in Y^{1,2}_0(\Omega)$ and $(\Div\B)_+=0$, the low regularity of $\B$ does not allow us to conclude immediately. For instance, we know that $\B\cdot\nabla(g^Y_\rho v_t-w_t)\in L^1(\Omega)$ only because $\B\cdot\nabla(g^Y_\rho v_t-w_t)=\B\cdot\nabla g^Y_\rho v_t$ (see Remark~\ref{rmk:bil}). To prove the claim, we will rely on~\eqref{eqn:beta:0}. Set $f:\R\to\R$ by $f(s):=(s-1)_+-(\log s)_+$, so that $f\Bigl(\frac{g^Y_\rho}{t}\Bigr)=g^Y_\rho v_t-w_t$. Since, for $s\ge 1$, we have $(s-1)-\log s\ge \frac{(s-1)^2}{2s^2}$, then $(\sqrt f)'(s)\le \frac{\sqrt{2}}{2}$. Therefore, since $f(0)=0$ and $\sqrt{f}$ is Lipschitz, we have that $\sqrt{g^Y_\rho v_t-w_t}=\sqrt{f\Bigl(\frac{g^Y_\rho}{t}\Bigr)}\in Y^{1,2}_0(\Omega)$, which allows us to write
\[
-\int_{\Omega}\B\cdot\nabla(g^Y_\rho v_t-w_t)=-2\int_{\Omega}\B\cdot\nabla \sqrt{f\Bigl(\frac{g^Y_\rho}{t}\Bigr)}\sqrt{f\Bigl(\frac{g^Y_\rho}{t}\Bigr)}\le 0.
\]
Then, by~\eqref{eqn:meas:g:1},
\begin{equation*}
\int_{\Omega_{2t}}\frac{|\nabla g^Y_\rho|^2}{(g^Y_\rho)^2}\le C t^{-1},
\end{equation*}
which, in view of~\eqref{eqn:meas:g}, implies
\begin{equation}\label{eqn:weak:g}
\|g^Y_\rho\|_{L^{\frac{n}{n-2},\infty}(\Omega)}\le C,
\end{equation}
where $C$ is a constant depending only on $n$ and $\lambda$.
Now, we show how to derive~\eqref{eqn:green:sup} from~\eqref{eqn:weak:g}. Let $X\in \Omega$ be such that $|X-Y|\ge 4\rho$. Then, setting $r=|X-Y|/8$, if $\delta(X)\ge r$, we apply Lemma~\ref{lem:boundedness:int} at scale $r/4$. If $\delta(X)<r$ and $x\in\partial\Omega$ is such that $|X-x|<r$, we apply Lemma~\ref{lem:boundedness} at scale $r$. Thus, distinguishing between the two cases, we get
\[
g^Y_\rho(X)\le C \fint_{D_{r}} g^Y_\rho\le Cr^{2-n}\|g^Y_\rho\|_{L^{\frac{n}{n-2},\infty}(\Omega)},
\]
where $D_r=B(X,r/2)$ in the first case and $D_r=D(x,2r)$ in the second case.
By~\eqref{eqn:weak:g} and the definition of $r$ we have
\begin{equation}\label{eqn:g:up}
0\le g^Y_\rho(X)\le C|X-Y|^{2-n}\qquad \forall X:|X-Y|\ge 4\rho.
\end{equation}
We now move to the proof of~\eqref{eqn:green:grad} and~\eqref{eqn:green:grad:1}. Let $R\ge4\rho$ and let $\eta\in C^\infty(\R^n)$ be a nonnegative function such that $\eta=0$ in $B(Y,R)$, $\eta=1$ in $\R^n\setminus B(Y,2R)$ and $|\nabla\eta|\le 2/R$. Then, using $g^Y_\rho\eta^2$ as a test function in~\eqref{eqn:sol:grho}, we get
\[
\int_{\Omega}A\nabla g^Y_\rho\nabla g^Y_\rho\eta^2\le CR^{-2}\int_{\Omega\cap B(Y,2R)\setminus B(Y,R)}|g^Y_\rho|^2-\int_{\Omega}\B\cdot\nabla g^Y_\rho g^Y_\rho\eta^2.
\]
Using the ellipticity of $A$ and the hypothesis $\Div B\le 0$, we have
\[
\int_{\Omega}|\nabla g^Y_\rho|^2\eta^2\le CR^{-2}\int_{\Omega\cap B(Y,2R)\setminus B(Y,R)}|g^Y_\rho|^2+C\int_\Omega |\B|(g^Y_\rho)^2|\nabla \eta|\eta.
\]
As usual, the hypothesis $\B\in \mathbf{\dot F^2}_\mu(\Omega)$ and Young's inequality allow us to split the second term on the right-hand side and to hide $\epsilon \int |\nabla g^Y_\rho|^2\eta^2$ in the left-hand side. This, together with~\eqref{eqn:g:up}, leaves us with
\begin{equation}\label{eqn:grad:g}
\int_{\Omega\setminus B(Y,2R)}|\nabla g^Y_\rho|^2\le CR^{2-n}.
\end{equation}
Moreover, by~\eqref{eqn:est:ex}, we have
\begin{equation*}
\int_\Omega|\nabla g^Y_\rho|^2\le C\rho^{2-n},
\end{equation*}
which, in particular, proves~\eqref{eqn:grad:g} also for $R<4\rho$.
We now estimate the measure of the superlevel set $\{|\nabla g^Y_\rho|>t\}$ for $t>0$. Markov inequality and~\eqref{eqn:grad:g}, applied with $R=t^{-\frac{1}{n-1}}$, give
\[
\bigl|\{|\nabla g^Y_\rho|>t\}\bigr|\le CR^n+\int_{\Omega\setminus B(Y,2R)}|\nabla g^Y_\rho|^2t^{-2}\le CR^n+C t^{-2}R^{2-n}=Ct^{-\frac{n}{n-1}},
\]
that is,
\begin{equation}\label{eqn:weak:grad:g}
\|\nabla g^Y_\rho\|_{L^{\frac{n}{n-1},\infty}(\Omega)}\le C.
\end{equation}
Therefore, for all $1\le q<\frac{n}{n-1}$ and all $R>0$,
\begin{equation}\label{eqn:grad:g:q}
\int_{\Omega\cap B(Y,R)}|\nabla g^Y_\rho|^q\le CR^{n-q(n-1)},
\end{equation}
where $C$ depends also on $q$.

Since $g^Y_\rho$ is a solution in $\Omega\setminus B(Y,2\rho)$ with zero trace on $\partial\Omega$, by Lemma~\ref{lem:h:int}, Theorem~\ref{thm:h}, and~\eqref{eqn:g:up}, for any $\epsilon>0$ the family $\{g^Y_\rho\}_{0<\rho<\epsilon/2}$ is equi-H\"older continuous and equibounded on any compact set $K\subset \overline{\Omega}\setminus B(Y,2\epsilon)$. By the Arzel\'a-Ascoli theorem and using a diagonal argument, up to a subsequence and as $\rho\to0$, $g^Y_\rho$ converges locally uniformly in $\overline{\Omega}\setminus \{Y\}$ to a function $G^Y\in C(\overline{\Omega}\setminus\{Y\})$. In particular, since $g^Y_\rho|_{\partial\Omega}=0$, also $G^Y|_{\partial\Omega}=0$. From now on, we will abuse notation by saying that we take the limit as $\rho\to 0$ of quantities involving $g^Y_\rho$; instead, we will always mean that we take the limit along some subsequence $\rho_\eta\to 0$. By~\eqref{eqn:grad:g}, ~\eqref{eqn:grad:g:q}, and weak compactness, $G^Y$ is a Sobolev function and,  $\nabla g^Y_\rho$ converges weakly in $L^2_{\textup{loc}}(\Omega\setminus\{Y\})$ and weakly in $L^1(\Omega)$ to $\nabla G^Y$ as $\rho\to 0$. Then, properties~\eqref{eqn:green:sol:away},  \eqref{eqn:green:sup}, ~\eqref{eqn:weak:green}, \eqref{eqn:green:grad}, and~\eqref{eqn:green:grad:1} follow by taking the limit as $\rho\to 0$ in~\eqref{eqn:sol:grho}, \eqref{eqn:g:up}, \eqref{eqn:grad:g}, and~\eqref{eqn:grad:g:q}.

Let now $F\in L^\infty_c(\overline{\Omega})$ and let $u^*\in Y^{1,2}_0(\Omega)$ be the unique solution to $L^*u^*=F$. Let $Y$ be a Lebesgue point of $u^*$. Using $u^*$ as a test function in~\eqref{eqn:sol:grho}, and $g^Y_\rho$ as a test function in the equation satisfied by $u^*$, we have
\[
\fint_{B(Y,\rho)}u^*=\langle Lg^Y_\rho, u^*\rangle=\langle g^Y_\rho, L^*u^*\rangle=\int_{\Omega}g^Y_\rho F.
\]
By~\eqref{eqn:weak:g}, \eqref{eqn:grad:g:q}, and the compact Sobolev embedding, $g^Y_\rho\to G^Y$ strongly in $L^1$ close to $Y$, while the convergence is locally uniform away from $Y$; since $F\in L^\infty_c(\overline{\Omega})$, letting $\rho\to0$ yields~\eqref{eqn:rep}.

It is worth explaining in detail the justification of the passage to the limit to obtain either~\eqref{eqn:green} or~\eqref{eqn:green:divfree}, since this is the only point in which either  $\B\in L^{n,1}_{\textup{loc}}(\Omega)$ or $\Div\B=0$ is needed.
Assume that $\B\in L^{n,1}_{\textup{loc}}(\Omega)$. By the Hölder inequality in Lorentz spaces, we have, for any $0<r<\delta(Y)/2$, 
\[
\int_{B(Y,r)}|\B||\nabla G^Y_\rho|\le C \|B\|_{L^{n,1}(B(Y,r))}\|\nabla G^Y\|_{L^{\frac{n}{n-1},\infty}(\Omega)}.
\]
Therefore, $\B\cdot\nabla G^Y_\rho\in L^1_{\textup{loc}}(\Omega)$. Let now $\varphi\in C^\infty_c(\Omega)$, use it as a test function in~\eqref{eqn:sol:grho}. We already know that, as $\rho\to0$, $\int_{\Omega}A\nabla g^Y_\rho\nabla\varphi\to\int_{\Omega}A\nabla G^Y\nabla\varphi$ and $\fint_{B(Y,\rho)}\varphi\to\varphi(Y)$. Thus, in order to prove~\eqref{eqn:green} it sufficies to prove 
\begin{equation}\label{eqn:lor:lim}
\int_{\Omega}\B\cdot(\nabla G^Y-\nabla g^Y_\rho)\varphi\to0.
\end{equation}
For $0<r<\delta(Y)/4$, let $\eta_r$ be a smooth function supported in $B(Y,2r)$, with $\eta_r=1$ in $B(Y,r)$. Since $\nabla g^Y_\rho\to \nabla G^Y$ weakly in $L^2_{\textup{loc}}(\Omega\setminus\{Y\})$, using Hölder inequality in Lorentz spaces again, together with~\eqref{eqn:weak:grad:g},  we have

\begin{align*}
\limsup_{\rho\to 0}
\left|
\int_{\Omega}
\B\cdot(\nabla G^Y-\nabla g^Y_\rho)\,\varphi
\right|
&\le
\limsup_{\rho\to 0}
\left|
\int_{\Omega}
\B\cdot(\nabla G^Y-\nabla g^Y_\rho)\,\varphi\eta_r
\right|
\\
&\quad+
\limsup_{\rho\to 0}
\left|
\int_{\Omega}
\B\cdot(\nabla G^Y-\nabla g^Y_\rho)
(1-\eta_r)\varphi
\right|
\\
&=
\limsup_{\rho\to 0}
\left|
\int_{\Omega}
\B\cdot(\nabla G^Y-\nabla g^Y_\rho)\,\varphi\eta_r
\right|
\\
&\le
C\|\varphi\|_{L^\infty(\Omega)}
\limsup_{\rho\to 0}
\left(
\|\nabla G^Y\|_{L^{\frac n{n-1},\infty}(\Omega)}
+
\|\nabla g^Y_\rho\|_{L^{\frac n{n-1},\infty}(\Omega)}
\right)
\\
&\qquad\times
\|\B\|_{L^{n,1}(B(Y,2r))}
\\
&\le
C\|\varphi\|_{L^\infty(\Omega)}
\|\B\|_{L^{n,1}(B(Y,2r))}.
\end{align*}

Since $r$ is arbitrarily small, this proves~\eqref{eqn:lor:lim}.

Assume  that $\Div\B=0$ instead. Without stronger integrability assumptions on $\B$, we cannot conclude that $\B\cdot\nabla G^Y$ is integrable close to the pole $ Y$. Therefore, the integral in~\eqref{eqn:green} might not make sense. However, we can use the divergence-free condition to rewrite the equation satisfied by $g^Y_\rho$ as 
\[
\int_{\Omega}A\nabla g^Y_\rho\cdot\nabla\varphi-\int_{\Omega}\B\cdot\nabla \varphi g^Y_\rho=\fint_{B(Y,\rho)}\varphi,
\quad\forall\varphi\in Y^{1,2}_0(\Omega).
\]
For any $\varphi\in C^\infty_c(\Omega)$, we can take the limit as $\rho\to 0$, getting~\eqref{eqn:green:divfree}.

We now prove a local lower bound for $g^Y_\rho$.
Let $X\in\Omega$ with $4\rho<|X-Y|<\delta(Y)/2$, set $r=5|X-Y|/4$, and let $0\le\eta\le 1$ be a smooth function with $\eta=1$ in $B(Y,r)\setminus B(Y,r/2)$, $\eta=0$ outside of $B(Y,3r/2)\setminus B(Y,r/4)$, and $|\nabla \eta|\le 10/r$. Using $g^Y_\rho\eta^2$ as a test function in~\eqref{eqn:sol:grho} we have
\begin{align*}
\int_{\Omega}|\nabla g^Y_\rho|^2\eta^2&\le C \int_\Omega A\nabla g^Y_\rho\nabla (g^Y_\rho \eta^2)- C\int_\Omega A\nabla g^Y_\rho \nabla \eta \eta g^Y_\rho\\
&=-C\int_{\Omega}\B\cdot\nabla g^Y_\rho g^Y_\rho\eta^2+C\int_\Omega |\nabla g^Y_\rho|| \nabla \eta| \eta g^Y_\rho.
\end{align*}
Then, using, as usual, Young's inequality and the hypothesis on $\B$, we write
\[
\int_{B(Y,r)\setminus B(Y,r/2)}|\nabla g^Y_\rho|^2\le \int_{\Omega}|\nabla g^Y_\rho|^2\eta^2\le C\int_{\Omega} |g^Y_\rho|^2|\nabla\eta|^2\le Cr^{-2}\int_{B(Y,3r/2)\setminus B(Y,r/4)}|g^Y_\rho|^2.
\]
Since $|Z-Y|\approx r$ in $B(Y,3r/2)\setminus B(Y,r/4)$, we have
\begin{equation}\label{eqn:pre:inf:g}
\int_{B(Y,r)\setminus B(Y,r/2)}|\nabla g^Y_\rho|^2\le Cr^{n-2}\sup_{B(Y,3r/2)\setminus B(Y,r/4)}(g^Y_\rho)^2\le Cr^{n-2}(g^Y_\rho(X))^2,
\end{equation}
where we applied Lemma~\ref{lem:h} in small balls, with radius comparable to $r$, covering the annulus $B(Y,3r/2)\setminus B(Y,r/4)$.
Assume now that $\Div\B=0$. Let $\varphi\in C^\infty_c(\Omega)$ be such that $0\le\varphi\le 1$, $\varphi=1$ on $B(Y,r/2)$, $\varphi=0$ in $\Omega\setminus B(Y,r)$, $|\nabla \varphi|\le \frac{10}{r}$, and $|\nabla^2\varphi|\le Cr^{-2}$. Using $\varphi$ as a test function in~\eqref{eqn:sol:grho} we have
\begin{align*}
1&=\int_{B(Y,r)\setminus B(Y,r/2)}A\nabla g^Y_\rho\nabla \varphi+\int_{\Omega}\B\cdot\nabla g^Y_\rho \varphi\\
&=\int_{B(Y,r)\setminus B(Y,r/2)}A\nabla g^Y_\rho\nabla \varphi-\int_{B(Y,r)\setminus B(Y,r/2)}\B\cdot\nabla\varphi g^Y_\rho\\
&\le Cr^{-1+n/2}\Bigl(\int_{B(Y,r)\setminus B(Y,r/2)}|\nabla g^Y_\rho|^2\Bigr)^{1/2}\\
&\quad+\Bigl(\int_{B(Y,r)\setminus B(Y,r/2)}|\B|^2|\nabla\varphi|^2\Bigr)^{1/2}
\Bigl(\int_{B(Y,r)\setminus B(Y,r/2)}|g^Y_\rho|^2\Bigr)^{1/2}\\
&\le Cr^{-1+n/2}\Bigl(\int_{B(Y,r)\setminus B(Y,r/2)}|\nabla g^Y_\rho|^2\Bigr)^{1/2}
+Cr^{n-2}\sup_{B(Y,r)\setminus B(Y,r/2)}g^Y_\rho\\
&\le Cr^{n-2}g^Y_\rho(X),
\end{align*}
where in the second line we used the hypothesis $\Div \B=0$, while in the fourth line we applied the hypothesis $\B\in \mathbf{\dot F^2}_\mu(\Omega)$, using as test functions in ~\eqref{eqn:fb} the partial derivatives of $\varphi$. In the last line we used~\eqref{eqn:pre:inf:g} and Lemma~\ref{lem:h}. We conclude
\begin{equation}\label{eqn:inf:g}
g^Y_\rho(X)\ge C^{-1}|X-Y|^{2-n},\qquad\forall X:4\rho<|X-Y|<\delta(Y)/2.
\end{equation}
Taking the limit as $\rho\to 0$ in~\eqref{eqn:inf:g} yields~\eqref{eqn:green:inf}.
Let $\Phi\in \textup{Lip}_c(\R^n)$ and let $u_\Phi\in Y^{1,2}(\Omega)$ be the solution to $Lu_\Phi=0$ with $\Tr u_\Phi=\Phi$.
Since $\Div \B=0$, then we can write
\[
L^*=-\Div(A^T\nabla)-\B\cdot\nabla,
\]
and hence $L^*$ satisfies the assumptions of Theorem~\ref{thm:green}. For $X\in\Omega$ let $g^X_{\rho,L^*}\in Y^{1,2}_0(\Omega)$ be the solution of~\eqref{eqn:sol:grho} with $X$ in place of $Y$, $A^T$ in place of $A$, and $-\B$ in place of $\B$.
Using $u_\Phi-\Phi\in Y^{1,2}_0(\Omega)$ as a test function in the equation satisfied by $g^X_{\rho,L^*}$ and $g^X_{\rho,L^*}$ as a test function in the equation satisfied by $u_\Phi$ we get
\begin{align*}
\fint_{B(X,\rho)}u_\Phi-\Phi&=\langle L^*g^X_{\rho,L^*},u_\Phi-\Phi\rangle=\langle L(u_\Phi-\Phi), g^X_{\rho,L^*} \rangle\\
&=-\langle L\Phi, g^X_{\rho,L^*} \rangle=-\int_{\Omega}A^T\nabla g^X_{\rho,L^*}\nabla \Phi-\int_\Omega\B\cdot\nabla \Phi g^X_{\rho,L^*}.
\end{align*}
Taking the limit as $\rho\to0$ we obtain~\eqref{eqn:rep:1}.
Take now any $X\ne Y$. Using $g^X_{\tilde\rho,L^*}$ as a test function in the equation satisfied by $g^Y_{\rho,L}$ and vice versa, we get
\begin{align*}
\fint_{B(Y,\rho)}g^X_{\tilde\rho,L^*}=\langle L g^Y_{\rho, L}, g^X_{\tilde\rho, L^*} \rangle=\langle L^*g^X_{\tilde\rho, L^*}, g^Y_{\rho, L}\rangle= \fint_{B(X,\tilde\rho)}g^Y_\rho.
\end{align*}
Taking the limit as $\rho,\tilde\rho\to0$, we end the proof of~\eqref{eqn:sym}.
\end{proof}

\begin{rmk}
The additional assumption $\B\in L^{n,1}_{\textup{loc}}(\Omega)$ that we imposed to get~\eqref{eqn:green} is not artificial. Reversing the sign of the drift in Sakellaris's example
\cite[Proposition~6.2]{Sak21}, consider, for $n\ge3$ and $\epsilon>0$,
\[
 \B(X):=
\epsilon\frac{X}{|X|^2\log(|X|)},
\qquad X\in\Omega:=B(0,e^{-1}),\quad X\ne 0.
\]
Then $\Div\B\le0$,
and $\B\in L^{n,q}(\Omega)$ for every $q>1$, whereas
$\B\notin L^{n,1}_{\textup{loc}}(\Omega)$.
A direct computation shows that the approximate Green functions with pole at $0$,
$g^0_\rho$, defined by~\eqref{eqn:sol:grho}, converge to
$G^0\equiv0$ in $\Omega\setminus\{0\}$. Consequently, this limit
does not satisfy the standard weak formulation
\[
\int_\Omega \nabla G^0\cdot\nabla\varphi
+\int_\Omega(\mathbf B\cdot\nabla G^0)\varphi
=\varphi(0),
\qquad \varphi\in C_c^\infty(\Omega).
\]
\end{rmk}

\section{CFMS estimates and comparison principle}\label{sec:8}
Throughout this section, for any $Y\in\Omega$ we denote 
\begin{equation*}
\Delta_Y:=B(Y,10\delta(Y))\cap\partial\Omega.
\end{equation*}
We begin by showing the connection between the Green's function and the harmonic measure.

\begin{lem}
Let $\Omega\subset\R^n$ be a one-sided chord-arc domain with unbounded complement.
Let $A:\Omega\to \R^{n\times n}$ be uniformly elliptic with ellipticity constant $\lambda$ and let $\B$ be a vector field satisfying $\B\in \mathbf{\dot F^2}_\mu(\Omega)$, for some $0<\mu<\infty$, and $\Div\B\le 0$ in the weak sense. Denote by $L$ the operator given by $L=-\Div(A\nabla)+\B\cdot\nabla$. Let $Y\in \Omega$ and denote $\omega_L^Y=\omega^Y$ and $G_L^Y=G^Y$.
Then, for all $X\in \Omega$ such that $|X-Y|\ge \delta(Y)/4$ we have
\begin{equation}\label{eqn:cfms}
\frac{G^Y(X)}{\delta(Y)}\le C\, \frac{\omega^X(\Delta_Y)}{\sigma(\Delta_Y)},
\end{equation}
where $C$ is a constant depending only on $n$, $\mu$, $\lambda$, and the geometric constants of $\Omega$. Recall that $\sigma$ is the $(n-1)$-dimensional Hausdorff measure.

Moreover, if $\Div\B=0$, then, for all $X\in \Omega$ such that $|X-Y|\ge 100K\delta(Y)$ we have 
\begin{equation}\label{eqn:cfms:1}
\frac{G^Y(X)}{\delta(Y)}\ge C^{-1}\, \frac{\omega^X(\Delta_Y)}{\sigma(\Delta_Y)},
\end{equation}
where $K$ is the constant in the statement of Corollary~\ref{cor:carl}, and $C$ is a constant depending only on $n$, $\mu$, $\lambda$, and the geometric constants of $\Omega$. 
\end{lem}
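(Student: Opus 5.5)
Both inequalities follow the classical CFMS argument of \cite{CafFMS81}; the only drift-specific inputs are the local Green bounds of Theorem~\ref{thm:green}, the Bourgain estimate~\eqref{eqn:bourgain}, the Carleson estimate~\eqref{eqn:carl}, the interior Harnack inequality (Lemma~\ref{lem:h}), and the maximum principle (Lemma~\ref{lem:max:princ}). Throughout, set $r:=\delta(Y)$ and pick $y\in\partial\Omega$ with $|Y-y|=r$. Then $\Delta(y,9r)\subset\Delta_Y$, and Ahlfors regularity gives $\sigma(\Delta_Y)\approx r^{n-1}$. Both inequalities therefore reduce to $G^Y(X)\approx r^{2-n}\omega^X(\Delta_Y)$.

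\emph{Upper bound~\eqref{eqn:cfms}.} We compare $G^Y$ with $u(X):=\omega^X(\Delta_Y)$ on the domain $\Omega\setminus \overline{B(Y,r/8)}$.
\begin{itemize}
\item On $\partial B(Y,r/8)$, the bound~\eqref{eqn:green:sup} gives $G^Y\le Cr^{2-n}$.
\item On the same sphere, Bourgain's estimate applied at $y$ with radius comparable to $r$ (so that $Y$ lies in the admissible region $D(y,\cdot/4)$), followed by Harnack chains of bounded length, gives $u\ge c$ on $\partial B(Y,r/8)$.
\item On $\partial\Omega$, $G^Y=0\le u$.
\end{itemize}
The maximum principle then yields $G^Y\le Cr^{2-n}u$ on $\Omega\setminus B(Y,r/8)$, which covers all $|X-Y|\ge r/4$.

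To apply the maximum principle rigorously, we work with the approximants $g^Y_\rho$ rather than with $G^Y$. We consider $w:=Cr^{2-n}u_f-g^Y_\rho$, where $f\le\mathds 1_{\Delta_Y}$ is Lipschitz and $f=1$ on $\Delta(y,8r)$. The estimate $u_f\ge c$ near $Y$ still holds by Bourgain applied to $\Delta(y,8r)$. Lemma~\ref{lem:max:princ} is stated on all of $\Omega$, so we need a localized version. We obtain it by testing with $w_-\eta$, where $\eta$ cuts off $B(Y,r/8)$; this uses that $\Div\B\le0$, so that the drift term has the right sign exactly as in the proof of Lemma~\ref{lem:max:princ}. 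We then let $\rho\to0$ and approximate $\mathds 1_{\Delta_Y}$ from below.

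\emph{Lower bound~\eqref{eqn:cfms:1}, assuming $\Div\B=0$.} Here $|X-Y|\ge100Kr$, and the roles of the two functions are reversed.
\begin{itemize}
\item \emph{Lower bound for $G^Y$ on a sphere.} By~\eqref{eqn:green:inf}, $G^Y\ge cr^{2-n}$ on $\partial B(Y,r/4)$. Harnack chains propagate this to $G^Y(Y_s)\gtrsim r^{2-n}$ at a corkscrew point $Y_s$ for $y$ at scale $s\approx 20r$.
\item \emph{Carleson estimate.} $G^Y$ is a nonnegative solution in $B(y,20Kr)$ away from its pole, which lies at distance about $r$ from $y$; strictly speaking, one uses the Carleson estimate at slightly larger scale together with~\eqref{eqn:green:sup} near the pole. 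Combined with the previous step, this gives control of $G^Y$ in $D(y,20r)$ in terms of $G^Y(Y_s)$.
\item \emph{Comparison.} Set $v:=\omega^{\cdot}(\Delta(y,11r))$, which is at least $u$, since $\Delta_Y\subset\Delta(y,11r)$. We compare $v$ with $Cr^{n-2}G^Y$ on $\Omega\setminus \overline{D(y,20r)}$. On $\partial\Omega$ away from $\Delta(y,11r)$, $v=0$. On $\partial D(y,20r)\cap\Omega$, we need $v\le C r^{n-2}G^Y$.
\end{itemize}

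The main obstacle is this last boundary inequality near $\partial\Omega$, where both $v$ and $G^Y$ vanish. The classical approach is to split $\partial B(y,20r)\cap\Omega$ into the part away from the boundary, where Harnack gives $G^Y\gtrsim r^{2-n}\ge r^{2-n}v$, and the part close to the boundary. For the latter I would choose the comparison region so that its lateral boundary stays inside a region where $v$ is itself small, using~\eqref{eqn:hold:c} for $1-v$ and for $v$. Concretely, I would follow CFMS by comparing on the tent complement $\Omega\setminus T(y,20r)$ from Lemma~\ref{lem:tent}. If this fails, the fallback is the adjoint route: write $v$ through~\eqref{eqn:rep:1} with a cutoff $\Phi$ of $\Delta(y,11r)$, giving
\[
v(X)\le -\int_\Omega A^T\nabla G^X_{L^*}\nabla\Phi-\int_\Omega\B\cdot\nabla\Phi\, G^X_{L^*}.
\]
We then bound the right-hand side by $Cr^{n-2}\sup_{D(y,20r)}G^X_{L^*}$, using Caccioppoli for $L^*$ and the form-bound applied to $\nabla\Phi$ exactly as in the lower-bound proof of Theorem~\ref{thm:green}. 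Finally, symmetry~\eqref{eqn:sym} converts $G^X_{L^*}(Z)$ into $G^Z_L(X)$, and Harnack together with the Carleson estimate in the variable $Z$ converts this into $G^Y(X)$. This fallback uses $\Div\B=0$ exactly where $L^*$ must again be a drift operator of the same type, which is why I expect it to be the route the authors intend.
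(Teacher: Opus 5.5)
Your upper bound is the paper's argument. The paper compares on $\Omega\setminus B(Y,\delta(Y)/4)$ and gets $\omega^X(\Delta_Y)\ge c$ on the sphere directly from Bourgain at scale $5\delta(Y)$. Your extra care with $g^Y_\rho$, $u_f$, and a localized maximum principle is fine.

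For the lower bound \eqref{eqn:cfms:1}, the adjoint route you call a fallback is exactly the paper's proof. It should not be a fallback, because the tent-complement comparison you put first cannot be closed with the tools available at this stage.

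\begin{itemize}
\item On the part of $\partial T(y,20r)\cap\Omega$ (or $\partial B(y,20r)\cap\Omega$) near $\partial\Omega$, both $v$ and $G^Y$ vanish. The required inequality $v\le Cr^{n-2}G^Y$ there is a boundary Harnack statement.
\item The H\"older decay \eqref{eqn:hold:c} only bounds $v$ from above by $(\delta(X)/r)^\alpha$. You have no lower bound for $G^Y$ decaying at the same rate, so reshaping the comparison region does not help.
\item The comparison principle, Lemma~\ref{lem:comp:princ}, would supply this. However, the paper proves it through the covering argument of Corollary~\ref{cor:doub}, which itself reruns the proof of \eqref{eqn:cfms:1}. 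Using it here is circular.
\end{itemize}

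Your bullets on propagating \eqref{eqn:green:inf} to a corkscrew point, and on a Carleson bound for $G^Y$ near its own pole, play no role in a working argument. Also, the ``classical'' proof of this direction (\cite[p.~10]{Ken94}, which the paper follows) is precisely the Green-representation argument, not a maximum-principle comparison.

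So commit to the adjoint route, which your sketch gets right:
\begin{enumerate}
\item Take $\Phi$ with $\Phi=1$ on $B(y,11r)$, $\supp\Phi\subset B(y,12r)$, $|\nabla\Phi|\lesssim r^{-1}$, and $|\nabla^2\Phi|\lesssim r^{-2}$. Then $\omega^X(\Delta_Y)\le u_\Phi(X)$, with $\Phi(X)=0$.
\item Express $u_\Phi(X)$ by \eqref{eqn:rep:1}.
\item Bound the $A^T$ term by Cauchy--Schwarz and \eqref{c_ineq} applied to $G^X_{L^*}$ on $D(y,13r)$.
\item Bound the drift term by $\bigl(\int|\B|^2|\nabla\Phi|^2\bigr)^{1/2}\|G^X_{L^*}\|_{L^2}$, using the form bound on $\partial_i\Phi$. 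This is why you need the bound on $\nabla^2\Phi$.
\item Conclude with \eqref{eqn:carl} for $Z\mapsto G^X_{L^*}(Z)$ (admissible since $|X-y|\ge 99Kr$), a Harnack chain to $Y$, and \eqref{eqn:sym}.
\end{enumerate}

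Your remark on where $\Div\B=0$ enters is correct. It makes $L^*=-\Div(A^T\nabla)-\B\cdot\nabla$ an operator of the same class, so Caccioppoli and Carleson apply to $G^X_{L^*}$. It is also the hypothesis under which \eqref{eqn:rep:1} and \eqref{eqn:sym} were established.
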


\begin{proof}
As in \cite[Lemma 3.45]{Hof26}, we follow \cite[p. 10]{Ken94}. Let $z\in\partial\Omega$ be such that $|z-Y|=\delta(Y)$. Then, $\Delta(z,9\delta(Y))\subset \Delta_Y$. If $10\delta(Y)>\diam(\Omega)$, then $\Delta_Y=\partial\Omega$, and~\eqref{eqn:cfms} follows directly from $\omega^X(\partial\Omega)=1$,~\eqref{eqn:green:sup}, and Ahlfors regularity. Thus, we may assume that $10\delta(Y)\le\diam(\Omega)$. 
For any $X: |X-Y|=\delta(Y)/4$ we also have $X\in D(z,5\delta(Y)/4)$. Therefore, by Lemma~\ref{lem:bourgain}, we have 
\[
\omega^X(\Delta_Y)\ge\omega^X(\Delta(z,5\delta(Y)))\ge c.
\]
By~\eqref{eqn:green:sup} and Ahlfors regularity we get
\[
\frac{\omega^X(\Delta_Y)}{\sigma(\Delta_Y)}\ge\frac{\omega^X(\Delta(z,5\delta(Y)))}{\sigma(\Delta_Y)} \ge C^{-1}\delta(Y)^{1-n}\ge C^{-1}\frac{G^Y(X)}{\delta(Y)}, \qquad |X-Y|=\delta(Y)/4.
\]
Take now $f\in C^\infty_c(B(z,9\delta(Y)))$ with $0\le f\le 1$ and $f=1$ in $B(z,5\delta(Y))$, and let $u_f$ be the solution to~\eqref{eqn:dir}. For any $X\in \Omega$ we have
\[
\omega^X(\Delta_Y)\ge u_f(X)\ge \omega^X(\Delta(z,5\delta(Y))).
\]
Therefore, an application of Lemma~\ref{lem:max:princ} in $\Omega\setminus B(Y,\delta(Y)/4)$ to the function $\frac{u_f}{\sigma(\Delta_Y)}-C^{-1}\frac{G^Y}{\delta(Y)}$ yields~\eqref{eqn:cfms}.

We now turn to the proof of~\eqref{eqn:cfms:1}. Let $\Phi\in C^\infty_c(B(z,12\delta(Y)))$ be such that $0\le\Phi\le 1$, $\Phi=1$ in $B(z,11\delta(Y))$, $|\nabla\Phi|\le 2/\delta(Y)$, and $|\nabla^2\Phi|\le 2/\delta(Y)^2$. For any $X:|X-Y|\ge 100K\delta(Y)$ we have $\Phi(X)=0$. Thus, since $\Delta_Y\subset \Delta(z,11\delta(Y))$, by~\eqref{eqn:rep:h} and~\eqref{eqn:rep:1}, we have
\begin{equation}\label{eqn:step:cfms}
\omega^X(\Delta_Y)\le u_\Phi(X)=-\int_{D(z,12\delta(Y))}A^T\nabla G^X_{L^*}\nabla \Phi-\int_{D(z,12\delta(Y))}\B\cdot\nabla \Phi G^X_{L^*}.
\end{equation}
Using H\"older's inequality, $|\nabla\Phi|\le 2/\delta(Y)$, $|\nabla^2\Phi|\le 2/\delta(Y)^2$, the uniform boundedness of $A$, and the fact that $\B\in \mathbf{\dot F^2}_\mu(\Omega)$, we estimate the right-hand side of~\eqref{eqn:step:cfms} as
\begin{align*}
-\int_{D(z,12\delta(Y))}A^T\nabla& G^X_{L^*}\nabla \Phi-\int_{D(z,12\delta(Y))}\B\cdot\nabla \Phi G^X_{L^*}\le C \delta(Y)^{\frac{n-2}{2}} \Bigl(\int_{D(z,12\delta(Y))}|\nabla G^X_{L^*}|^2\Bigr)^{1/2}\\
&+ C\delta(Y)^{\frac{n}{2}}\Bigl(\int_{\Omega}|\B|^2|\nabla\Phi G^X_{L^*}|^2\Bigr)^{1/2}\le C \delta(Y)^{\frac{n-2}{2}} \Bigl(\int_{D(z,12\delta(Y))}|\nabla G^X_{L^*}|^2\Bigr)^{1/2}\\
&+C\delta(Y)^{\frac{n}{2}}\Bigl(\int_{D(z,12\delta(Y))}|\nabla^2\Phi|^2| G^X_{L^*}|^2\Bigr)^{1/2}\\
&\le C \delta(Y)^{\frac{n-2}{2}} \Bigl(\int_{D(z,12\delta(Y))}|\nabla G^X_{L^*}|^2\Bigr)^{1/2}+C\delta(Y)^{\frac{n-4}{2}}\Bigl(\int_{D(z,12\delta(Y))}| G^X_{L^*}|^2\Bigr)^{1/2}.
\end{align*}
Notice that $X\notin B(z,13\delta(Y))$. Therefore, by~\eqref{eqn:green}, $G^X_{L^*}$ is a solution with zero trace in $B(z,13\delta(Y))$. By~\eqref{c_ineq}, applied with $\eta\in C^\infty_c(B(z,13\delta(Y)))$, $0\le\eta\le1$, $\eta=1$ in $B(z,12\delta(Y))$, and $|\nabla\eta|\le 2/\delta(Y)$, we can estimate the first term on the right-hand side of the last inequality, getting
\[
-\int_{D(z,12\delta(Y))}A^T\nabla G^X_{L^*}\nabla \Phi-\int_{D(z,12\delta(Y))}\B\cdot\nabla \Phi G^X_{L^*}\le C\delta(Y)^{\frac{n-4}{2}}\Bigl(\int_{D(z,13\delta(Y))}| G^X_{L^*}|^2\Bigr)^{1/2}.
\]
We now estimate the right-hand side using~\eqref{eqn:carl}, with $u=G^X_{L^*}$, and we plug the result in~\eqref{eqn:step:cfms}, obtaining
\[
\omega^X(\Delta_Y)\le C\delta(Y)^{n-2}G^X_{L^*}(Y_\Delta),
\]
where $Y_\Delta$ is a corkscrew point for $z$ at scale $13\delta(Y)$. By Ahlfors regularity $\sigma(\Delta_Y)\approx\delta(Y)^{n-1}$, and by the interior Harnack inequality~\eqref{eqn:h}, together with the interior Harnack chain condition, we conclude $G^X_{L^*}(Y_\Delta)\approx G^X_{L^*}(Y)$. Finally,~\eqref{eqn:sym} gives $G^X_{L^*}(Y)=G^Y_{L}(X)$, ending the proof of~\eqref{eqn:cfms:1}.
\end{proof}

As a corollary, we obtain that the harmonic measure is doubling.
\begin{cor}\label{cor:doub}
Let $\Omega\subset\R^n$ be a one-sided chord-arc domain with unbounded complement.
Let $A:\Omega\to \R^{n\times n}$ be uniformly elliptic with ellipticity constant $\lambda$ and let $\B$ be a vector field satisfying $\B\in \mathbf{\dot F^2}_\mu(\Omega)$, for some $0<\mu<\infty$, and $\Div\B= 0$ in the weak sense.
Let $x\in\partial\Omega$, $0<r<\infty$, and $X\in\Omega\setminus B(x,4Kr)$, where $K>1$ is the scaling constant defined in Lemma~\ref{lem:tent}. Then,
\begin{equation}\label{eqn:doub}
\omega^X(\Delta(x,2r))\le C\omega^X(\Delta(x,r)),
\end{equation}
where $C$ is a constant depending only on $n$, $\mu$, $\lambda$, and the geometric constants of $\Omega$. 
\end{cor}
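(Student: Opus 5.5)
The plan is the classical route from the CFMS estimates to doubling: use~\eqref{eqn:cfms:1} to bound $\omega^X(\Delta(x,2r))$ above by a Green's function at a corkscrew point at scale $r$, use~\eqref{eqn:cfms} to bound the Green's function at a corkscrew point at a smaller scale by $\omega^X(\Delta(x,r))$, and use Harnack chains to compare the two Green's function values.

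First choose the poles. Let $Y_1$ be a corkscrew point for $x$ at scale $r/20$, with a small constant to be fixed. Then $\delta(Y_1)\approx r$, and $\Delta_{Y_1}=B(Y_1,10\delta(Y_1))\cap\partial\Omega\subset\Delta(x,r)$ because $\delta(Y_1)\le|Y_1-x|<r/20$. Applying~\eqref{eqn:cfms} with pole $Y_1$ is legitimate when $|X-Y_1|\ge\delta(Y_1)/4$, which holds since $X\notin B(x,4Kr)$. This gives
\[
G^{Y_1}(X)\lesssim r^{2-n}\omega^X(\Delta_{Y_1})\le r^{2-n}\omega^X(\Delta(x,r)).
\]

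Next, for the upper bound on the large ball, use~\eqref{eqn:cfms:1}. Its conclusion involves the surface ball $\Delta_{Y_2}$, whose radius is $10\delta(Y_2)$ and so cannot be prescribed directly. I would therefore cover $\Delta(x,2r)$ by a bounded number of surface balls $\Delta(z_j,r/20)$ with $z_j\in\Delta(x,2r)$; Ahlfors regularity bounds the number of such balls. For each $j$, pick $Y_2^j$ a point with $|Y_2^j-z_j|=\delta(Y_2^j)$ and $\delta(Y_2^j)\approx r/200$, namely a corkscrew point for $z_j$ at that scale. This ensures $\Delta(z_j,r/20)\subset\Delta_{Y_2^j}$, possibly after enlarging the constant 10 slightly or the cover count. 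Then~\eqref{eqn:cfms:1} gives
\[
\omega^X(\Delta(z_j,r/20))\lesssim r^{n-2}G^{Y_2^j}(X),
\]
provided $|X-Y_2^j|\ge100K\delta(Y_2^j)\approx Kr/2$. This holds because $|X-x|\ge4Kr$ and $|Y_2^j-x|\le3r$.

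Finally, I compare $G^{Y_2^j}(X)$ with $G^{Y_1}(X)$. By the symmetry relation~\eqref{eqn:sym}, $G^{Y}(X)=G^X_{L^*}(Y)$, and $Y\mapsto G^X_{L^*}(Y)$ is a nonnegative solution of $L^*u=0$ away from $X$. Here $L^*=-\Div(A^T\nabla)-\B\cdot\nabla$ satisfies the same hypotheses since $\Div\B=0$. The points $Y_1$ and $Y_2^j$ lie in $D(x,3r)$ at distance $\approx r$ from $\partial\Omega$, so they are joined by a Harnack chain of bounded length with balls of radius $\approx r$. These balls stay inside $D(x,CKr)$, away from $X$ provided the constants are arranged so that the chain avoids $B(X,\cdot)$; this is where $|X-x|\ge4Kr$ is used. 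Applying Lemma~\ref{lem:h} along the chain gives $G^{Y_2^j}(X)\approx G^{Y_1}(X)$. Summing over $j$ and combining the two displays yields~\eqref{eqn:doub}.

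The main obstacle is the bookkeeping of geometric constants: the corkscrew scales and the cover must be chosen so that $\Delta_{Y_1}\subset\Delta(x,r)$, the balls $\Delta_{Y_2^j}$ cover $\Delta(x,2r)$, and the thresholds $|X-Y|\ge100K\delta(Y)$ in~\eqref{eqn:cfms:1} and the Harnack chain region are compatible with the single hypothesis $X\notin B(x,4Kr)$. I would first try taking all pole scales to be a small multiple $\theta r$ with $\theta\ll1$, so that $100K\theta r\le 2Kr$. The case $10\delta(Y)>\diam\Omega$ is excluded because $X\in\Omega$ is far from $x$.
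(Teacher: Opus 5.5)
Your strategy is the same as the paper's. You cover $\Delta(x,2r)$ by boundedly many surface balls of radius $s\approx\theta r$, bound the harmonic measure of each by $s^{n-2}G^X_{L^*}$ at a point at distance $\approx s$ from the boundary, move all these values to one corkscrew point $Y_1$ of $x$ via Harnack chains and \eqref{eqn:sym}, and close with \eqref{eqn:cfms} and $\Delta_{Y_1}\subset\Delta(x,r)$. Your first and last steps are fine.

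\textbf{The gap is in the middle step, where you apply the \emph{statement} of \eqref{eqn:cfms:1}.} That estimate concerns the specific set $\Delta_Y=B(Y,10\delta(Y))\cap\partial\Omega$. So you need poles $Y_2^j$ with $\delta(Y_2^j)\approx\theta r$ and $\Delta(z_j,s)\subset\Delta_{Y_2^j}$, and such poles need not exist.

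\begin{itemize}
\item A point with $|Y_2^j-z_j|=\delta(Y_2^j)$ is not a corkscrew point, and it may not exist. Take the interior of a circular cone of small half-aperture $\alpha$, a Lipschitz domain. The vertex is the nearest boundary point of no interior point. A lateral point at distance $t$ from the vertex is the nearest point only of points with $\delta\le t\tan\alpha$.
\item A genuine corkscrew point $Z$ for $z_j$ at scale $\rho$ only satisfies $|Z-z_j|<\rho$ and $\delta(Z)\ge\varepsilon\rho$. This does not give $z_j\in\Delta_Z$ when $\varepsilon<1/10$.
\item In fact, in that cone with $\sin\alpha<1/10$ and $x$ the vertex, every interior $Y$ has $|Y|\ge\delta(Y)/\sin\alpha$. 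Hence no set $\Delta_Y$ with $\delta(Y)\approx\theta r$ meets a boundary neighborhood of the vertex of size comparable to $\theta r$, and that neighborhood has positive harmonic measure.
\end{itemize}

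So no choice of poles at a comparable scale makes your covering work. The constant $10$ also cannot be enlarged ``slightly'': it would have to depend on $\varepsilon$, which means re-proving \eqref{eqn:cfms:1}.

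\textbf{The fix is what the paper does: use the \emph{proof} of \eqref{eqn:cfms:1}, not its statement.}
\begin{itemize}
\item Take $\Phi\in C^\infty_c(B(z_j,2s))$ with $0\le\Phi\le1$ and $\Phi=1$ on $B(z_j,s)$.
\item Run \eqref{eqn:rep:1}, the form bound on $\B$, the Caccioppoli inequality \eqref{c_ineq}, and the Carleson estimate \eqref{eqn:carl} for $G^X_{L^*}$ near $z_j$.
\item This gives $\omega^X(\Delta(z_j,s))\lesssim s^{n-2}G^X_{L^*}(Z_j)$, with $Z_j$ a corkscrew point for $z_j$ at scale comparable to $s$.
\end{itemize}
In the original proof, the nearest-point property of $Y$ is used only to place $\Delta_Y$ inside a ball centered at $z$ and to compare $G^X_{L^*}(Y_\Delta)$ with $G^X_{L^*}(Y)$. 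You need neither.

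One constant also needs correcting. The argument requires $X$ to lie outside a ball around $z_j$ of radius $s$ times a multiple of the constant from Corollary~\ref{cor:carl}. That constant is not a priori the tent constant $K$ of Lemma~\ref{lem:tent} in the hypothesis, and your check of $|X-Y_2^j|\ge100K\delta(Y_2^j)$ silently identified the two. Choose $\theta$ small depending on both.

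With this replacement, the rest of your argument goes through as written.
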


\begin{proof}
Let $M\gg1$ be sufficiently large, depending on the geometric constants of $\Omega$ and on $n$, and set $s:=r/M$. By Ahlfors regularity, we can cover $\Delta(x,2r)$ with $N$ surface balls $\{\Delta(x_i,s)\}_{i=1,\dots,N}$ centered in $\Delta(x,2r)$, where $N$ depends only on the geometric constants of $\Omega$ and on $n$. For any $1\le i\le N$, let $Y_i$ be a corkscrew point for $x_i$ at scale $13s$. Taking $M$ sufficiently large, the argument used in the proof of~\eqref{eqn:cfms:1}, with a cutoff adapted to $\Delta(x_i,s)$, gives
\begin{equation}\label{eqn:step:doub}
\omega^X(\Delta(x,2r))
\le C\sum_{i=1}^N\omega^X(\Delta(x_i,s))
\le Cs^{n-2}\sum_{i=1}^N G^X_{L^*}(Y_i).
\end{equation}
Let $L=-\Div(A\nabla)+\B\cdot\nabla$ and let $Y$ be a corkscrew point for $x$ at scale $r/M$. Since $\delta(Y_i)\approx r$, $\delta(Y)\approx r$, and $|Y_i-Y|\lesssim r$, for $M$ big enough, the Harnack chain condition and~\eqref{eqn:h} give
\[
G^X_{L^*}(Y_i)\le C G^X_{L^*}(Y)=CG^Y(X).
\]
Since $M$ and $N$ are fixed,~\eqref{eqn:step:doub} yields
\[
\omega^X(\Delta(x,2r))\le Cr^{n-2}G^Y(X).
\]
By~\eqref{eqn:cfms} we conclude
\[
\omega^X(\Delta(x,2r))\le C\omega^X(\Delta_Y)\le C\omega^X(\Delta(x,r)),
\]
where we used that our choice of $Y$ guarantees that $\Delta_Y\subset\Delta(x,r)$.
\end{proof}

We now show a comparison principle for nonnegative solutions with zero trace.
\begin{lem}\label{lem:comp:princ}
Let $\Omega\subset\R^n$ be a one-sided chord-arc domain with unbounded complement.
Let $A:\Omega\to \R^{n\times n}$ be uniformly elliptic with ellipticity constant $\lambda$ and let $\B$ be a vector field satisfying $\B\in \mathbf{\dot F^2}_\mu(\Omega)$, for some $0<\mu<\infty$, and $\Div\B=0$ in the weak sense.
Let $x\in\partial\Omega$, $0<r\le\diam(\Omega)$, and let $X_r$ be a corkscrew point for $x$ at scale $r$. There exists a constant $C_0>1$, depending only on $n$ and on the geometric constants of $\Omega$, such that the following holds. Let $u, v\in W^{1,2}(T(x,2C_0r))$ be nonnegative and not identically zero solutions in $T(x,2C_0r)$ (see Lemma~\ref{lem:tent}) with $\Tr u=\Tr v=0$ in $\partial T(x,2C_0r)\cap\partial\Omega$. Then, 
\begin{equation}\label{eqn:comp:princ}
C^{-1}\frac{u(X_r)}{v(X_r)}\le\frac{u(X)}{v(X)}\le C\frac{u(X_r)}{v(X_r)},\quad \forall X\in B(x,r)\cap\Omega,
\end{equation}
where $C$ is a constant depending only on $n$, $\mu$, $\lambda$, and the geometric constants of $\Omega$. 
\end{lem}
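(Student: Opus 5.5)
The plan is to follow the classical route of \cite{CafFMS81} (boundary Harnack via harmonic measure and Green's function), transplanted to the tent $T:=T(x,2C_0r)$. By Lemma~\ref{lem:tent}, $T$ is itself a one-sided chord-arc domain with unbounded complement (bounded, so the complement is unbounded), with geometric constants depending only on $n$ and those of $\Omega$. Moreover $\B$ restricted to $T$ still lies in $\mathbf{\dot F^2}_\mu(T)$ and is divergence free in $T$, since $C^\infty_c(T)\subset C^\infty_c(\Omega)$. Hence every result of Sections~\ref{sec:5}--\ref{sec:8} applies in $T$ with uniform constants. These include the Harnack inequality~\eqref{eqn:h}, the Carleson estimate~\eqref{eqn:carl}, the Hölder decay~\eqref{eqn:hold:c}, the harmonic measure $\omega_T$, the Green's function $G_T$ with its symmetry~\eqref{eqn:sym}, the CFMS estimates~\eqref{eqn:cfms}--\eqref{eqn:cfms:1}, and doubling~\eqref{eqn:doub}. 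By symmetry in $u,v$ it suffices to prove the upper inequality. After normalising $u(X_r)=v(X_r)=1$ (Harnack chains plus~\eqref{eqn:h} give positivity in the interior), the goal is $u\le Cv$ on $D(x,r)$.

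The key reduction is a two-sided comparison of each solution with the harmonic measure of $T$. Choose a point $X_0\in T$ with $\delta_T(X_0)\approx C_0r$ and $|X_0-x|\approx C_0r$, far from $B(x,r)$ relative to the scales involved. The target estimate is
\[
u(X)\approx u(X_0)\,\omega_T^{X}\bigl(\partial T\setminus \Delta(x,C_0r)\bigr)
\qquad\text{for } X\in D(x,r),
\]
and likewise for $v$. Since $u(X_0)\approx u(X_r)$ by Harnack chains, the ratio bound follows immediately.

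For the upper bound I would compare $u$ on $T':=T(x,C_0 r/2)$, or rather on a slightly smaller subdomain, with $\omega_{T}^{X}(\cdot)$ of the complementary boundary piece. The steps are as follows.
\begin{itemize}
\item[(i)] Carleson~\eqref{eqn:carl} gives $u\le Cu(X_0)$ on $D(x,C_0r/2)$.
\item[(ii)] On the portion of $\partial T'$ inside $\Omega$ that stays a distance $\gtrsim r$ from $\partial\Omega$, Bourgain's estimate~\eqref{eqn:bourgain} combined with Harnack shows that the harmonic-measure function is bounded below.
\item[(iii)] Near $\partial\Omega$, the comparison requires the harder CFMS-type argument. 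By~\eqref{eqn:cfms} and~\eqref{eqn:cfms:1}, $\omega_T^X(\cdot)$ is comparable to $r^{n-2}G_T(X,X_0)$ for $X$ near $x$, using~\eqref{eqn:sym} and doubling. So it is enough to compare $u$ with $G_T(\cdot,X_0)$, which is again a nonnegative solution vanishing on $\partial\Omega\cap\partial T$.
\end{itemize}

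The comparison $u\lesssim u(X_0)\,r^{n-2}G_T(\cdot,X_0)$ near the boundary is the main obstacle, since a naive maximum principle fails where both functions vanish. I would use the standard iterative argument of \cite[Theorem 1.1]{CafFMS81} (or the Jerison--Kenig approach via the ``$\omega$-comparison'' lemma). At each boundary point $y\in\Delta(x,r)$ and scale $s$, the Hölder decay~\eqref{eqn:hold:c} and Carleson yield $u\le Cu(Y_s)$ near $y$. The CFMS estimate then gives $G_T(\cdot,X_0)\approx s^{2-n}\omega_T^{X_0}(\Delta(y,s))^{-1}\ldots$. The lower bound $u(X)\gtrsim u(X_0)\,\omega^X_T(\ldots)$ comes directly from the maximum principle, Lemma~\ref{lem:max:princ}, applied in $T'$, using Harnack on the interior part of $\partial T'$. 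The upper bound I would derive with the maximum principle in $T'$ against $C u(X_0)\,\omega_{T'}^X(\partial T'\setminus\partial\Omega)$, using (i) on $\partial T'\cap\Omega$. Finally, $\omega_{T'}^X(\partial T'\cap\Omega)\lesssim\omega_T^X(\partial T\setminus\Delta(x,C_0r))$ follows from CFMS plus doubling in both domains, with the change of domain handled via the Green's function comparison $G_{T'}\le G_T$ (maximum principle). The choice of $C_0$ enters in making the corkscrew and Harnack chains for $T$ lie inside the region where the solutions are defined.
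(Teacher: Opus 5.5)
Your upper bound is essentially right. Carleson \eqref{eqn:carl} in small boundary balls, together with Harnack chains inside $T(x,2C_0r)$, gives $u\le C\,u(X_r)$ on the interior boundary $\Gamma:=\overline{\partial T'\cap\Omega}$ of a smaller tent $T'$. The maximum principle then yields $u(X)\le C\,u(X_r)\,\omega^X_{T'}(\Gamma)$. As written, though, your step (i) bounds $u$ only on $D(x,C_0r/2)$, and this set does not meet $\partial T'\cap\Omega$ at all, since $D(x,5C_0r/8)\subset T'$; you need the bound on $\Gamma$ itself.

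The genuine gap is the lower bound. You claim that $u\gtrsim u(X_0)\,\omega^X_T(\cdots)$ follows ``directly'' from the maximum principle in $T'$ with Harnack on the interior part of $\partial T'$. But Harnack only bounds a solution from below on $\Gamma_\epsilon:=\Gamma\cap\{\delta\ge\epsilon r\}$. It says nothing on the part of $\Gamma$ running into $\partial\Omega$, where the solutions tend to $0$. So the maximum principle only gives $v(X)\ge c\,v(X_r)\,\omega^X_{T'}(\Gamma_\epsilon)$ (likewise for $u$). The proof closes only if
\[
\omega^X_{T'}(\Gamma)\le C\,\omega^X_{T'}(\Gamma_\epsilon),\qquad X\in D(x,r),
\]
which is the heart of the lemma and is never formulated in your proposal. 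Passing through $\omega_T$ of the larger tent does not avoid it: the chain $\omega^X_{T'}(\Gamma)\lesssim\omega^X_T(\partial T\setminus\Delta(x,C_0r))\lesssim\omega^X_{T'}(\Gamma_\epsilon)$ that your scheme needs contains exactly this inequality.

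Your other devices do not supply it either.
\begin{itemize}
\item The comparison $G_{T'}\le G_T$ points the wrong way for a lower bound. Bounding $G_{T'}$ from below by $G_T$ near $x$ is itself a boundary Harnack statement.
\item Step (iii) is circular. Comparing $u$ with $G_T(\cdot,X_0)$ near $x$ is the lemma applied to the pair $(u,G_T(\cdot,X_0))$, and deferring to ``the standard iterative argument of CFMS'' is not an argument.
\item \eqref{eqn:cfms:1} needs the pole at distance at least $100K\delta_T(X_0)$ from $X_0$. Your choice $\delta_T(X_0)\approx|X_0-x|\approx C_0r$ does not give this.
\end{itemize}

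The paper obtains the displayed inequality by running the covering argument of Corollary~\ref{cor:doub} inside the one-sided chord-arc domain $T'=T(x,3C_0r/2)$.
\begin{itemize}
\item First, $u,v\not\equiv0$ forces $\Omega\setminus T(x,2C_0r)\neq\emptyset$. So a Harnack chain from $X_r$ to a corkscrew point outside $T(x,5C_0r/3)$ crosses $\partial T'$ at some $\xi$ with $\delta(\xi)\ge 2\epsilon r$. Hence $\Gamma_{2\epsilon}\neq\emptyset$, which you also need and never check.
\item Next, cover $\Gamma$ by boundedly many surface balls of $\partial T'$ at scale $s$. Bound the $\omega^X_{T'}$-measure of each by $s^{n-2}G^X_{T',L^*}(Y_i)$, using the argument that proves \eqref{eqn:cfms:1}: the representation \eqref{eqn:rep:1}, Caccioppoli, and Carleson.
\item Then Harnack-chain every $Y_i$ to a single point $Y$ near $\xi$ with $\Delta_Y\subset B(\xi,\epsilon r)\cap\partial T'\subset\Gamma_\epsilon$. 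Conclude with \eqref{eqn:sym} and \eqref{eqn:cfms}, which bounds $\delta(Y)^{n-2}G^Y(X)$ by $\omega^X(\Delta_Y)\le\omega^X(\Gamma_\epsilon)$.
\end{itemize}
The constant $C_0$ is chosen so that every pole $X\in D(x,r)$ is far from $\Gamma$ relative to the covering scale. Combining this inequality with the upper bound for $u$ and the lower bound for $v$ gives \eqref{eqn:comp:princ}.
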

\begin{proof}
We adapt the arguments in \cite[Lemma 1.3.7]{Ken94} to our setting. Let $C_0>1$ be a number to be chosen momentarily. First, by symmetry, it is sufficient to prove the upper bound in~\eqref{eqn:comp:princ}. Since $u$ and $v$ are not identically zero, then $\Omega\setminus T(x,2C_0r)\ne\emptyset$. Indeed, if $T(x,2C_0r)=\Omega$, by Lemma~\ref{lem:existence} we would have $u=v=0$.  Given this, by connecting through a Harnack chain $X_r$ to a corkscrew point at a scale proportional to $r$ in $\Omega\setminus T(x,5C_0r/3)$, there exists $\epsilon>0$, depending only on the geometric constants of $\Omega$ and on $C_0$, such that
\[
\Gamma_{2\epsilon}:=\partial T(x,3C_0r/2)\cap\Omega\cap\{X\in\Omega:\delta(X)\ge 2\epsilon r\}\ne \emptyset.
\]
Let $\xi\in \Gamma_{2\epsilon}$ and notice that $B(\xi,\epsilon r)\cap\partial T(x,3C_0r/2)\subset\Gamma_{\epsilon}$, where $\Gamma_\epsilon=\partial T(x,3C_0r/2)\cap\Omega\cap\{X\in\Omega:\delta(X)\ge \epsilon r\}$. Set $\Gamma=\partial T(x,3C_0r/2)\cap\overline{\Omega}$, and notice that $\Gamma\subset B(\xi,M\epsilon r)$, where $M$ depends only on the geometric constants of $\Omega$ and on $C_0$. Since $D(x,15C_0r/8)\subset T(x,3C_0r/2)$, we have $\dist(X,\Gamma)\ge 7C_0r/8$ for every $X\in B(x,r)\cap\Omega$. Therefore, we choose $C_0$, depending only on the geometric constants of $\Omega$ and on $n$, sufficiently big so that the covering argument used in the proof of Corollary~\ref{cor:doub}, applied in the one-sided chord-arc domain $T(x,3C_0r/2)$, gives
\begin{equation}\label{eqn:step:comp:princ}
\omega_{T(x,3C_0r/2)}^X(\Gamma)\le C\omega_{T(x,3C_0r/2)}^X(\Gamma_\epsilon),\quad\forall X\in B(x,r)\cap\Omega.
\end{equation}
Here $\omega_{T(x,3C_0r/2)}$ is the harmonic measure for the operator $L=-\Div(A\nabla)+\B\cdot\nabla$ in $T(x,3C_0r/2)$, and $C$ depends only on $n$, $\mu$, $\lambda$, and the geometric constants of $\Omega$.

Using~\eqref{eqn:carl} in small balls centered in $\Gamma$ and using the Harnack chain condition in $T(x,3C_0r/2)$ to connect the corkscrew points of such balls to $X_r$, we estimate, by~\eqref{eqn:h},
\[
u(X)\le Cu(X_r), \quad X\in \partial T(x,3C_0r/2),
\]
where we also used that $u=0$ on $\partial T(x,3C_0r/2)\cap\partial\Omega$.
Let now $f_j\in C^\infty_c(\R^n)$ with $0\le f_j\le 1$, $f_j=1$ in $\Gamma$, and $f_j=0$ outside of $\{X:\dist(X,\Gamma)<1/j\}$. Let $u_{f_j}$ be the solution to $Lu=0$ in $T(x,3C_0r/2)$ with $\Tr u_{f_j}=f_j|_{\partial T(x,3C_0r/2)}$ on $\partial T(x,3C_0r/2)$. By the maximum principle (see Lemma~\ref{lem:max:princ}) applied in the domain $T(x,3C_0r/2)$, we get
\[
u(X)\le Cu(X_r)u_{f_j}(X),\quad X\in T(x,3C_0r/2).
\]
Sending $j\to\infty$ we can write
\begin{equation}\label{eqn:step:1:comp:princ}
u(X)\le Cu(X_r)\omega^X_{T(x,3C_0r/2)}(\Gamma),\quad X\in T(x,3C_0r/2).
\end{equation}
We now move to the estimate of $v$ from below. By~\eqref{eqn:h} and by the Harnack chain condition we can write
\[
v(X)\ge C^{-1}v(X_r), \quad X\in\Gamma_\epsilon.
\]
Taking $g_j\in C^\infty_c(\R^n)$ such that $0\le g_j\le 1$, $g_j=1$ in $\{X\in \Gamma_\epsilon:\delta(X)\ge\epsilon r(1+2^{-j})\}$, and $g_j=0$ in $\partial T(x,3C_0r/2)\setminus \Gamma_\epsilon$, by the maximum principle we have 
\[
v(X)\ge C^{-1}v(X_r) u_{g_j}(X),\quad X\in T(x,3C_0r/2).
\]
Sending $j\to\infty$ we can write
\begin{equation}\label{eqn:step:2:comp:princ}
v(X)\ge C^{-1}v(X_r)\omega^X_{T(x,3C_0r/2)}(\Gamma_\epsilon), \quad X\in T(x,3C_0r/2).
\end{equation}
Finally, combining~\eqref{eqn:step:2:comp:princ} and~\eqref{eqn:step:1:comp:princ} with~\eqref{eqn:step:comp:princ} when $X\in B(x,r)\cap\Omega$, we end the proof of~\eqref{eqn:comp:princ}.
\end{proof}

The last result of this section is the change of pole for harmonic measure. 
\begin{cor}\label{cor:change}
Let $\Omega\subset\R^n$ be a one-sided chord-arc domain with unbounded complement.
Let $A:\Omega\to \R^{n\times n}$ be uniformly elliptic with ellipticity constant $\lambda$ and let $\B$ be a vector field satisfying $\B\in \mathbf{\dot F^2}_\mu(\Omega)$, for some $0<\mu<\infty$, and $\Div\B=0$ in the weak sense.
Let $x\in\partial\Omega$, $0<r\le\diam(\Omega)$, and let $X_r$ be a corkscrew point for $x$ at scale $r$. Then, for every Borel set $E\subset \Delta(x,r)$, we have 
\begin{equation}\label{eqn:change}
C^{-1}\omega^{X_r}(E)\le\frac{\omega^X(E)}{\omega^X(\Delta(x,r))}\le C\omega^{X_r}(E),\quad \forall X\in\Omega:|X-x|\ge 2r.
\end{equation}
\end{cor}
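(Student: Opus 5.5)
The plan is to compare two nonnegative solutions that vanish on the boundary away from the pole. The authors stress that they avoid the boundary Harnack principle for the adjoint Green function. So I would apply it instead to functions of the form $X\mapsto\omega^X(F)$, which are solutions of $L$ itself.

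By a standard approximation it suffices to treat $E=\Delta(z,s)$ with $z\in\Delta(x,r)$ and $s$ small. Indeed, once \eqref{eqn:change} holds for all such surface balls, both sides are finite Borel measures in $E$ that are comparable on small balls. A Vitali-type covering, legitimate since $\omega^{X_r}$ is doubling by Corollary~\ref{cor:doub}, together with outer regularity then extends the comparison to all Borel $E\subset\Delta(x,r)$. Equivalently, one can differentiate and compare the Radon--Nikodym derivatives.

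Fix $X$ with $|X-x|\ge2r$, a ball $\Delta(z,s)$ with $s\ll r$, and a corkscrew point $Y_s$ for $z$ at scale $s$. Consider
\[
u(Z):=\omega^Z(\Delta(z,s)),\qquad v(Z):=\omega^Z(\Delta(x,r)),
\]
each approximated by continuous-data solutions $u_f$ as in Lemma~\ref{lem:hm}. Both are nonnegative solutions with zero trace on $\partial\Omega\setminus\overline{\Delta(x,r)}$. The pole $X$ lies at distance $\gtrsim r$ from $\Delta(x,r)$.

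The key step is to replace comparison near the pole $X$ by comparison near the surface ball, using the Green function only of $L$. By CFMS, \eqref{eqn:cfms} and \eqref{eqn:cfms:1}, together with doubling (Corollary~\ref{cor:doub}), we have
\[
\omega^X(\Delta(z,s))\approx s^{n-2}G^{Y_s}(X),\qquad \omega^X(\Delta(x,r))\approx r^{n-2}G^{X_r}(X),
\]
when $X$ is far from these balls. Near the ball this holds with $X$ replaced by $X_r$ as well, so in particular $\omega^{X_r}(\Delta(z,s))\approx s^{n-2}G^{Y_s}(X_r)$. It therefore suffices to show
\[
\frac{G^{Y_s}(X)}{G^{X_r}(X)}\approx G^{Y_s}(X_r)\,r^{n-2}.
\]
Here both $G^{Y_s}$ and $G^{X_r}$, viewed as functions of their first variable near the boundary, are solutions of $L$. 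They are not, however, solutions in a common neighbourhood of $X$; and $X$ may be near the boundary far from $x$, so comparison principles in $X$ would require tent domains around $X$.

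My route is to avoid working near $X$ altogether. Set $w_1(Z):=\omega^Z(\Delta(z,s))$ and $w_2(Z):=\omega^Z(\Delta(x,r))/\omega^{X_r}(\Delta(x,r))$, noting that $\omega^{X_r}(\Delta(x,r))\approx1$ by Bourgain (Lemma~\ref{lem:bourgain}). Apply Lemma~\ref{lem:comp:princ} to $w_1$ and to $1-w_3$, where $w_3$ is the harmonic measure of a slightly larger ball, in the region away from $\Delta(x,2r)$. The claim to establish is
\[
\omega^Z(\Delta(z,s))\approx\omega^{X_r}(\Delta(z,s))\,\omega^Z(\Delta(x,r))\qquad\text{for }Z\in\partial B(x,2r)\cap\Omega,
\]
which uses comparison on small boundary balls around points of $\partial B(x,2r)\cap\partial\Omega$, where both functions vanish. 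The maximum principle (Lemma~\ref{lem:max:princ}) in $\Omega\setminus \overline{B(x,2r)}$ then propagates this comparison to all $X$ with $|X-x|\ge2r$, since both sides are solutions there and are comparable on the boundary of that region.

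The main obstacle is proving the comparison on $\partial B(x,2r)$ at points close to $\partial\Omega$. There I would combine the Carleson estimate \eqref{eqn:carl} for the upper bound, and the CFMS lower bound with Harnack chains for the lower bound, with all constants uniform in $s$.
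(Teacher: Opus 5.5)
Your overall architecture is the paper's. You establish $\omega^Z(E)\approx\omega^{X_r}(E)\,\omega^Z(\Delta(x,r))$ on $\partial B(x,2r)\cap\Omega$, propagate it to $\{|X-x|\ge 2r\}$ with the maximum principle in $\Omega\setminus\overline{B(x,2r)}$ applied to continuous-data approximants, and normalize with Lemma~\ref{lem:bourgain}.

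The gap is in the step you call the main obstacle: what you propose there does not close it.
\begin{itemize}
\item The Carleson estimate \eqref{eqn:carl} only gives the upper bound $w_1(Z)\lesssim w_1(Z')$ for a local corkscrew point $Z'$.
\item The matching lower bound $w_2(Z)\gtrsim w_2(Z')$ is false as $Z\to\partial\Omega$, because $w_2$ vanishes there. What you need is that $w_1$ and $w_2$ vanish at the same rate. That is a boundary Harnack statement, which Carleson plus Harnack chains cannot deliver.
\item Passing through CFMS does not help. It replaces $w_1,w_2$ by $s^{n-2}G^{Y_s}(\cdot)$ and $r^{n-2}G^{Y}(\cdot)$, which are again two nonnegative solutions vanishing near the same boundary point. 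You are back to the same comparison problem, and resolving it through $G_{L^*}$ and \eqref{eqn:sym} is exactly what the paper is avoiding.
\item Applying Lemma~\ref{lem:comp:princ} to $1-w_3$ is not legitimate. Its trace on $\partial\Omega\setminus\Delta(x,2r)$ is $1$, not $0$.
\end{itemize}

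The fix is to apply Lemma~\ref{lem:comp:princ} directly to the pair $u_{f_j},u_{g_j}$, approximating $\omega^{\cdot}(E)$ and $\omega^{\cdot}(\Delta(x,r))$.
\begin{itemize}
\item Take centers $z\in\partial\Omega\cap B(x,3r)\setminus B(x,3r/2)$ and scale $r/M$ with $M$ large. Then the tent $T(z,2C_0r/M)$ stays away from $\Delta(x,r)$, so both functions are nonnegative solutions with zero trace on its boundary portion.
\item Lemma~\ref{lem:comp:princ} then makes the ratio at any $X\in B(z,r/M)\cap\Omega$ comparable to the ratio at a corkscrew point $Z_{r/M}$. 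By Harnack chains and \eqref{eqn:h}, that is comparable to the ratio at $X_r$.
\item Points of $\partial B(x,2r)\cap\Omega$ with $\delta(X)\ge r/M$ are handled by Harnack chains alone.
\end{itemize}
The constant in Lemma~\ref{lem:comp:princ} does not depend on the functions, so this works for any bounded open $E\subset\Delta(x,r)$. Your reduction to surface balls (Vitali covering and doubling of $\omega^{X_r}$) and the CFMS/Green function discussion can therefore be dropped entirely.
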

\begin{proof}
By Borel regularity of harmonic measure we may assume that $E$ is bounded and open in $\partial\Omega$. Let $j\in\N$, and let $f_j, g_j\in C^\infty_c(\R^n)$ be such that $0\le f_j,g_j\le 1$, $f_j=1$ in $\{z\in E: \dist(z,\partial\Omega\setminus E)>1/j\}$, $g_j=1$ in $\Delta(x,r(1-2^{-j}))$, $f_j=0$ in $\partial\Omega\setminus E$, and $g_j=0$ in $\partial\Omega\setminus \Delta(x,r)$. Note that $\Tr u_{f_j}=\Tr u_{g_j}=0$ on $\partial\Omega\setminus \Delta(x,r)$. For any $z\in\partial\Omega\cap B(x,3r)\setminus B(x,3r/2)$, we apply~\eqref{eqn:comp:princ} with center $z$ and scale $r/M$, where $M>1$ is a big enough number depending only on the geometric constants of $\Omega$. We obtain 
\[
C^{-1}\frac{u_{f_j}(Z_{r/M})}{u_{g_j}(Z_{r/M})}\le\frac{u_{f_j}(X)}{u_{g_j}(X)}\le C \frac{u_{f_j}(Z_{r/M})}{u_{g_j}(Z_{r/M})}, \quad X\in B(z,r/M)\cap\Omega,
\]
where $Z_{r/M}$ is a corkscrew point for $z$ at scale $r/M$. Connecting $Z_{r/M}$ and $X_r$ with a Harnack chain and using~\eqref{eqn:h} we can write
\begin{equation}\label{eqn:step:change}
C^{-1}\frac{u_{f_j}(X_r)}{u_{g_j}(X_r)}\le\frac{u_{f_j}(X)}{u_{g_j}(X)}\le C \frac{u_{f_j}(X_r)}{u_{g_j}(X_r)}, \quad X\in B(z,r/M)\cap\Omega.
\end{equation}
Moreover, for any $X\in\partial B(x,2r)\cap\Omega$, either $X\in B(z,r/M)$ for some $z\in\partial\Omega\cap B(x,3r)\setminus B(x,3r/2)$, or $\delta(X)\ge r/M$. In the first case,~\eqref{eqn:step:change} holds for $X$, while in the second case, we can connect $X$ and $X_r$ with a Harnack chain and, using once again~\eqref{eqn:h}, we obtain that~\eqref{eqn:step:change} holds for all $X\in\partial B(x,2r)\cap\Omega$. Consider the solutions $w_1:=u_{f_j}-Cu_{g_j}\frac{u_{f_j}(X_r)}{u_{g_j}(X_r)}$ and $w_2:=u_{f_j}-C^{-1}u_{g_j}\frac{u_{f_j}(X_r)}{u_{g_j}(X_r)}$. Both functions have zero trace in $\partial\Omega\setminus B(x,2r)$, and $w_1\le 0$ in $\partial B(x,2r)\cap\Omega$, $w_2\ge 0$ in $\partial B(x,2r)\cap\Omega$. The maximum principle, applied in $\Omega\setminus B(x,2r)$, yields $w_1\le 0$ everywhere and $w_2\ge 0$ everywhere. That is,~\eqref{eqn:step:change} holds for any $X:|X-x|\ge 2r$. Sending $j\to\infty$ we obtain
\begin{equation}\label{eqn:step:1:change}
C^{-1}\frac{\omega^{X_r}(E)}{\omega^{X_r}(\Delta(x,r))}\le \frac{\omega^{X}(E)}{\omega^{X}(\Delta(x,r))}\le C\frac{\omega^{X_r}(E)}{\omega^{X_r}(\Delta(x,r))}, \quad |X-x|\ge 2r.
\end{equation}
Using~\eqref{eqn:bourgain} and connecting $X_r$ with a corkscrew point for $x$ at scale $r/4$ we find $\omega^{X_r}(\Delta(x,r))\ge c$. Therefore,~\eqref{eqn:step:1:change}, together with $\omega^{X_r}(\Delta(x,r))\le 1$, gives~\eqref{eqn:change}, ending the proof.
\end{proof}

\begin{rmk}
In \cite{Hof26}, the author stressed that it was not clear whether the change of pole formula for the harmonic measure was true under the smallness assumption $|\B|(X)\le\frac{\epsilon}{\delta(X)}$, for all $X\in \Omega$, without any condition on $\Div\B$. The reason was that, under such assumptions, the adjoint operator $L^*$ was not well-behaved, and it would not have been possible to apply Lemma~\ref{lem:comp:princ} to the Green's function $G^X_{L^*}$ to obtain Corollary~\ref{cor:change} (see the discussion right after \cite[Lemma 3.50]{Hof26}). In our setting, $L^*$ is of the same form as $L$, but we chose to provide a proof of Corollary~\ref{cor:change} that does not rely on applying Lemma~\ref{lem:comp:princ} to solutions to $L^*u^*=0$. In particular, the same proof could be used to prove the change of pole formula also in the small constant setting of \cite{Hof26}. For the sake of completeness, we state the corresponding result in a corollary.
\end{rmk}

\begin{cor}
Let $\Omega\subset\R^n$ be a one-sided chord-arc domain with unbounded complement.
There exists $\epsilon>0$, depending only on $n$, $\lambda$, and the geometric constants of $\Omega$, such that the following holds. Let $A:\Omega\to \R^{n\times n}$ be uniformly elliptic with ellipticity constant $\lambda$ and let $\B$ be a vector field satisfying 
\[
|\B|(X)\le \frac{\epsilon}{\delta(X)},\quad\forall X\in\Omega.
\]
Let $x\in\partial\Omega$, $0<r<\diam(\Omega)$, and let $X_r$ be a corkscrew point for $x$ at scale $r$. Then, for every Borel set $E\subset \Delta(x,r)$, we have 
\begin{equation*}
C^{-1}\omega^{X_r}(E)\le\frac{\omega^X(E)}{\omega^X(\Delta(x,r))}\le C\omega^{X_r}(E),\quad \forall X\in\Omega:|X-x|\ge 2r,
\end{equation*}
where $C$ is a constant depending only on $n$, $\lambda$, and the geometric constants of $\Omega$.
\end{cor}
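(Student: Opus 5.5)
The plan is to run the proof of Corollary~\ref{cor:change} verbatim, after checking that each ingredient it uses is available under the smallness assumption $|\B|\delta\le\epsilon$, with no condition on $\Div\B$. That proof uses only the following facts:
\begin{itemize}
\item[(a)] the maximum principle (Lemma~\ref{lem:max:princ}) in $\Omega$ and in $\Omega\setminus \overline{B(x,2r)}$;
\item[(b)] the interior Harnack inequality~\eqref{eqn:h} together with Harnack chains;
\item[(c)] Bourgain's estimate~\eqref{eqn:bourgain};
\item[(d)] the comparison principle~\eqref{eqn:comp:princ} for nonnegative solutions of $Lu=0$ with zero trace, for the operator $L$ itself.
\end{itemize}
The last point is the key observation. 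The proof never invokes $L^*$, the Green function of the adjoint, or the symmetry~\eqref{eqn:sym}. Those objects enter only through the proof of Lemma~\ref{lem:comp:princ}, and I will reprove that lemma for $L$ with tools available in the small-drift setting.

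For (a)–(c) I would first use the Hardy inequality of \cite{Anc86}. It gives $\int|\B|^2\varphi^2\le\epsilon^2\int\varphi^2\delta^{-2}\le C_H\epsilon^2\|\nabla\varphi\|_2^2$, so $\B\in\mathbf{\dot F^2}_{C_H\epsilon^2}(\Omega)$. Since no bound on $(\Div\B)_+$ is available, I would replace every use of~\eqref{eqn:div:trick} by the crude Cauchy--Schwarz estimate
\[
\Bigl|\int\B\cdot\nabla u\,u\varphi\Bigr|\le \sqrt{C_H}\,\epsilon\,\|\nabla u\sqrt\varphi\|_2\,\|\nabla(u\sqrt\varphi)\|_2 ,
\]
and absorb it into $\lambda\int|\nabla u|^2\varphi$ once $\epsilon\ll\lambda$. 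With this change, the coercivity argument of Lemma~\ref{lem:existence}, the Caccioppoli inequality (Lemma~\ref{lem:Cacc}), the maximum principle, De Giorgi boundedness, the weak Harnack inequality, and the density Lemma~\ref{lem:density} all go through. Indeed, in those proofs the drift only ever appears in terms of the form $\int|\B|^2(\cdot)^2$, except where the divergence trick is used, and there smallness replaces it. This yields (a)–(c), the harmonic measure of Lemma~\ref{lem:hm}, the Carleson estimate~\eqref{eqn:carl}, and the constants $C$ depending only on $n$, $\lambda$ and the geometric constants. The smallness is inherited by the tents $T(x,\cdot)$, since $\delta_T\le\delta$ there.

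For (d) I would follow the proof of Lemma~\ref{lem:comp:princ}. Its upper bound for $u$ via~\eqref{eqn:carl} and the lower bound for $v$ via Harnack chains are unaffected. The only nontrivial input is~\eqref{eqn:step:comp:princ}, i.e.\ $\omega_T^X(\Gamma)\lesssim\omega_T^X(\Gamma_\epsilon)$ in $T=T(x,3C_0r/2)$. In the divergence-free case this came from the CFMS argument using $G_{L^*}$. Here I would instead cite the local Green function bounds, CFMS estimates and doubling of harmonic measure proved in \cite{Hof26} under~\eqref{eqn:small}. These concern $\omega_L$ and $G_L$ only, and they imply that $\omega_T^X$ is doubling for $X$ far from the portion of $\partial T$ being covered. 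The same covering argument then gives~\eqref{eqn:step:comp:princ}.

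The main obstacle I expect is exactly this step. I must make sure the doubling result of \cite{Hof26} is stated for the harmonic measure of $L$ alone, is valid in the tents with uniform constants, and that its proof does not secretly use adjoint estimates. If it does, my fallback is a direct argument. Cover $\Gamma$ by boundedly many balls $B(\xi_i,\epsilon r)$. On each ball near $\partial\Omega$, bound the $L$-harmonic measure of $\Gamma\cap B(\xi_i,\epsilon r)$ by a multiple of the value at a corkscrew point, using~\eqref{eqn:carl} applied to $\omega_T^{\cdot}(\Gamma\cap B(\xi_i,\epsilon r))$, which vanishes on the rest of $\partial T$. Then compare with $\omega^X_T(\Gamma_\epsilon)$ through Harnack chains and Bourgain's estimate. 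This gives
\[
\omega_T^X(\Gamma)\le C\,\omega_T^X(\Gamma_\epsilon),
\]
again using only $L$-solutions. Once~\eqref{eqn:step:comp:princ} holds, the remainder is the proof of Corollary~\ref{cor:change} copied unchanged.
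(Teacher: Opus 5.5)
Your main route is correct and is essentially the paper's argument. The paper justifies this corollary only by observing that the proof of Corollary~\ref{cor:change} uses $L$-solutions alone: the maximum principle, Harnack chains, Bourgain's estimate, and the comparison principle applied to $u_{f_j},u_{g_j}$, never $G_{L^*}$ or~\eqref{eqn:sym}. So that proof transfers to the small-drift setting once these ingredients are taken from \cite{Hof26}, or rederived via Hardy's inequality and smallness as you do.

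One caveat: your fallback would not work. Carleson estimates, Harnack chains and Bourgain's estimate only bound $\omega_T^X(\Gamma\cap B(\xi_i,\epsilon r))$ by $\omega_T^{X^*}(\Gamma_\epsilon)$ at an interior corkscrew point $X^*$. Transferring that bound to points $X$ near $\partial\Omega$, where $\omega_T^X(\Gamma_\epsilon)\to0$, is exactly the boundary Harnack principle you are trying to prove. So you genuinely need the CFMS/doubling input from \cite{Hof26}, which remains valid regardless of how its proof is organized.
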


\section{Dimension drop in the upper half space $\R^3_+$}\label{sec:9}
In this section, for each $\epsilon>0$, we will construct a drift $\B$ on $\R^3_+:=\{(x_1,x_2,x_3)\in\R^3:x_3>0\}$ that satisfies the smallness assumption
\begin{equation}\label{eqn:smalldrift}
|\B(X)|\le \frac{\epsilon}{\delta(X)},\quad X\in\R^3_+,
\end{equation}
and the divergence-free condition
\begin{equation}\label{eqn:divfree}
\Div(\B)=0.
\end{equation}
We consider the operator 
\begin{equation}\label{eqn:op:drop}
L=-\Delta+\B\cdot\nabla
\end{equation}
in the domain $\Omega=\R^3_+$. We show an example of such a drift $\B$ for which the harmonic measure $\omega_{L}$ exhibits a dimension drop. The nature of our example is two-dimensional. The reason why we present it in three dimensions is that we developed the elliptic theory that we will use in this section only in dimensions larger than or equal to three (see Remark~\ref{rmk:2dim}).

\begin{thm}\label{thm:dim:drop}
Let $\epsilon>0$. There exists $q_0\in\N$ and $\gamma>0$ such that the following hold. Define, for $q\ge q_0$,
\begin{align}
 \Phi(x,y,z)
   &=\epsilon\sum_{j=0}^{\infty}e^{-q^jz}\cos(q^jx),\label{eqn:potential}\\
 \B(x,y,z)=-\nabla\Phi(x,y,z)
   &=\epsilon\sum_{j=0}^{\infty}q^je^{-q^jz}
           \bigl(\sin(q^jx),0,\cos(q^jx)\bigr).\label{eqn:field}
\end{align}
Then, $\B$ satisfies~\eqref{eqn:smalldrift} and~\eqref{eqn:divfree}. Moreover, $\B\in C^\infty(\R^3_+;\R^3)$.
Let $L$ be as in~\eqref{eqn:op:drop}, and $\omega_{L}=:\omega$ be the harmonic measure given by Lemma~\ref{lem:hm}.
Then, 
\[
\dimH\omega<2=\dimH\partial\R^3_+.
\]
More explicitly, set
\begin{equation}\label{eqn:carrier}
 S_m(x)=\sum_{j=0}^{m-1}\cos(q^jx),\qquad
 E=\left\{x\in\R:\liminf_{m\to\infty}\frac{S_m(x)}m
                        \ge\gamma\right\},\qquad F=E\times\R.
\end{equation}
Then, for every $X\in\Omega$,
\begin{equation}\label{eq:mainconclusion}
 \omega^X(F)=1,\qquad \dimH F\le2-\frac{\gamma^2}{32\log q}<2.
\end{equation}
\end{thm}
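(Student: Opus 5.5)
The plan is to first verify the elementary properties of $\B$, and then rewrite $L$ in divergence form with a weight. Each summand $e^{-q^jz}\cos(q^jx)$ is harmonic in $\R^3_+$. The series \eqref{eqn:potential} and all of its derivatives converge locally uniformly for $z>0$, so $\Phi$ is harmonic and $\B\in C^\infty$, and $\Div\B=-\Delta\Phi=0$ gives \eqref{eqn:divfree}. For \eqref{eqn:smalldrift} I use $|\B|\le\epsilon\sum_j q^je^{-q^jz}$ and bound $z\sum_j q^je^{-q^jz}$ uniformly. With $t=q^jz$, the sum $\sum_j t e^{-t}$ over a geometric progression of ratio $q\ge2$ is bounded by an absolute constant; if the normalization in \eqref{eqn:smalldrift} is to hold literally, I replace $\epsilon$ by $\epsilon/C$ in \eqref{eqn:potential}. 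The key structural observation comes next. $Lu=0$ means $\Delta u=\B\cdot\nabla u=-\nabla\Phi\cdot\nabla u$, that is,
\[
\Div(e^{\Phi}\nabla u)=0 .
\]
Equivalently, the associated diffusion is $dX_t=\sqrt2\,dW_t+\nabla\Phi(X_t)\,dt$, and it is pushed toward the maxima of each mode. Since $\Phi\approx\epsilon S_m(x)$ at height $z\approx q^{-m}$, the weight behaves like $e^{\epsilon S_m}$. This already suggests that $\omega$ favors points where $S_m/m$ is positive.

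The core of the argument is a one-scale bias estimate, used together with the CFMS, doubling and change-of-pole results of Section~\ref{sec:8}, which all apply since $\Div\B=0$ and $\B\in\mathbf{\dot F^2}_\mu$ by the Hardy inequality. Let $Q$ be a cell of width comparable to $q^{-j}$ in $x$ (one period of $\cos(q^jx)$), unit size in $y$ after localization, and let $X_Q$ be a corkscrew point at height about $q^{-j}$. By Corollary~\ref{cor:change}, the normalized restriction $\omega^X|_{Q}/\omega^X(Q)$ is comparable, with constants independent of $j$, to $\omega^{X_Q}|_Q$. I will aim to prove the following claim: for every such cell,
\[
\fint_{Q}\cos(q^jx)\,\frac{d\omega^{X}}{\omega^X(Q)}\;\ge\;2\gamma,\qquad \gamma\approx c\,\epsilon ,
\]
up to errors $O(q^{-1})$. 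To prove it I rescale $Q$ to unit size. In the rescaled picture the modes $k<j$ have gradient $O(\epsilon q^{k-j})$, so they are negligible. The modes $k>j$ are of size $\epsilon e^{-q^{k-j}}$ at the relevant height; they are negligible there, and near the boundary they only perturb within the uniform estimates of Section~\ref{sec:8}. This leaves the single mode $\Phi_0=\epsilon e^{-z}\cos x$. I then expand $u=u_0+u_1$, where $u_0$ is the Poisson extension of the data $\cos x$ and $u_1$ solves $\Delta u_1=-\nabla\Phi_0\cdot\nabla u_0$ with zero trace, and use \eqref{eqn:rep} for the correction. A first-order computation should give $u_1(X_Q)\ge c\epsilon$ for a definite sign; concretely, $\int G\,\nabla(e^{-z}\cos x)\cdot\nabla(e^{-z}\cos x)>0$. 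This is exactly the statement that the drift points toward the maxima of the cosine, and it would establish the claim. Passing from data $\cos x$ on a single period to the normalized measure on a cell requires the comparison principle (Lemma~\ref{lem:comp:princ}) and doubling (Corollary~\ref{cor:doub}) to control edge effects. I expect this step, a quantitative $\epsilon$-order positive bias that is uniform over scales and positions and also robust to the higher modes near the boundary, to be the main obstacle. If the perturbative route fails, I would try a direct comparison of $\omega$ with the harmonic measure of the weighted operator $\Div(e^{\Phi_0}\nabla)$ on the rescaled cell.

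Given the bias estimate, I let $\mathcal F_j$ be the $\sigma$-algebra generated by the $q$-adic cells at scale $q^{-j}$, and define increments $D_j:=\cos(q^jx)-\mathbb E_{\omega^X}[\cos(q^jx)\mid\mathcal F_j]$. These are bounded martingale differences with $|D_j|\le2$, and $\mathbb E[\cos(q^jx)\mid\mathcal F_j]\ge2\gamma-O(q^{-1})$. By Azuma's inequality and Borel--Cantelli, $\omega^X$-almost surely $S_m/m\ge2\gamma-O(q^{-1})-o(1)\ge\gamma$ once $q\ge q_0$. The boundary of $\R^3_+$ is unbounded in $y$, but this is handled by localization and countable additivity. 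This yields $\omega^X(F)=1$, and the harmonic Harnack inequality transfers it to every pole.

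For the dimension bound, I cover $E\cap[0,2\pi]$ at scale $q^{-m}$. The value of $\cos(q^jx)$ is essentially determined on $q$-adic intervals, and under Lebesgue measure the digits are independent. Hoeffding's inequality with increments in $[-1,1]$, together with the errors from non-exact determination, shows that the number of scale-$q^{-m}$ intervals meeting $\{S_m\ge \gamma m/2\}$ is at most $q^m e^{-\gamma^2m/32}$. This gives $\dimH E\le1-\gamma^2/(32\log q)$. Since $F=E\times\R$ is a product with a line, and the covering is by squares, $\dimH F\le2-\gamma^2/(32\log q)$, which is \eqref{eq:mainconclusion}.
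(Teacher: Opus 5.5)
Your verification of the elementary properties of $\B$ and your covering bound for $\dimH F$ are fine. Hoeffding for the nearly independent $q$-adic digits can replace the lacunary estimate \eqref{eq:exponential}, provided you cover a superlevel set with threshold close to $\gamma m$, so that the $O(m/q)$ digit-approximation errors are absorbed.

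The gap is in $\omega^X(F)=1$. The cell-by-cell bias estimate $\int_Q\cos(q^jx)\,d\omega^X\ge2\gamma\,\omega^X(Q)$, which you flag as the main obstacle, cannot be obtained by the route you describe, for three reasons.

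\emph{First}, Corollary~\ref{cor:change}, Lemma~\ref{lem:comp:princ} and Corollary~\ref{cor:doub} give comparability only up to a fixed multiplicative constant. Such comparability says nothing about the sign of an average of size $O(\epsilon)$. For example, if $\nu$ is uniform on a period, then $d\nu'=(1-\tfrac12\cos x)\,d\nu$ satisfies $\tfrac12\nu\le\nu'\le\tfrac32\nu$, yet $\int\cos x\,d\nu'=-\tfrac14$. So a bias of order $\epsilon$ survives none of these steps: the transfer from $\omega^{X_Q}$ to $\omega^X|_Q/\omega^X(Q)$, the control of edge effects, or the treatment of the modes $k>j$ ``within the uniform estimates''.

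\emph{Second}, the perturbative computation does not give a bias at the corkscrew point. In your rescaled single-mode model, the $L$-extension of $\cos x$ is $e^{-z}\cos x+\frac{\epsilon}{4}(1-e^{-2z})+O(\epsilon^2)$. At height $\approx1$ the order-one, position-dependent term $e^{-1}\cos x_Q$ dominates. Likewise, $\omega^{X_Q}|_Q$ has a negative bias when $X_Q$ sits above a minimum of $\cos(q^jx)$. The first-order gain $u_1>0$ only wins at heights $\gtrsim\log(1/\epsilon)$, where the measure is no longer localized to one cell. Turning that into a statement about $\omega^X|_Q$ would require $(1+o(\epsilon))$-accurate control of Radon--Nikodym derivatives between poles, which nothing in Section~\ref{sec:8} provides.

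\emph{Third}, the expansion $u_0+u_1$ is only meaningful for small $\epsilon$, whereas the theorem is claimed for every $\epsilon>0$.

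The missing idea is to control an exponential moment along successive \emph{levels}, rather than conditional means on boundary cells.
\begin{itemize}
\item The function $v=e^{-\Phi/2}$ satisfies $Lv=\frac14|\B|^2v\ge0$. This is the analytic form of the fact that $\Phi$ increases along the diffusion, since $(\Delta+\nabla\Phi\cdot\nabla)\Phi=|\nabla\Phi|^2$. Moreover, $|\B|\ge\epsilon q^m/10$ near height $q^{-m}$.
\item Comparing with a quadratic bump via the maximum principle, the paper obtains a one-step contraction $V_m(X_0)\le\rho\,v(X_0)$ for every $X_0$ at height $q^{-(m-1)}$ (Lemma~\ref{lem:drop:key}). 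This holds uniformly in the horizontal position, with $\rho<1$ for every $\epsilon>0$.
\item Iterating through the level decomposition of Lemma~\ref{lem:comp:h} gives $\int e^{-\Phi/2}\,d\omega^{X_0}_m\le\rho^m v(X_0)$.
\item Since $\Phi=\epsilon S_m+O(\epsilon)$ on $\{z=q^{-m}\}$, Markov's inequality applied to $e^{-\Phi/2}$ bounds the $\omega^{X_0}_m$-measure of $\{S_m\le\gamma m\}\times\R\times\{q^{-m}\}$ by $(e^{\epsilon\gamma}\rho)^m v(X_0)$.
\item To pass from level $q^{-m}$ to $\partial\R^3_+$, the paper uses \eqref{eqn:size:horizontal}: points where $S_m>4\gamma m$ lie at distance $>e^{\gamma m/8}q^{-m}$ from $\{S_m\le\gamma m\}$. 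Combined with the boundary H\"older decay \eqref{eqn:hold:c}, Borel--Cantelli then concludes.
\end{itemize}
If you want to keep a martingale formulation, the filtration that works is the one generated by hitting the levels $\{z=q^{-m}\}$. The quantity to control is $e^{-\Phi/2}$, not $\cos(q^jx)$ conditioned on $q$-adic cells.
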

\begin{rmk}
A straightforward consequence of the Harnack inequality and of the definition of Harmonic measure is that $\omega^X$ and $\omega^Y$ have the same null sets for any $X\ne Y$. Therefore, we can prove~\eqref{eq:mainconclusion} for a fixed pole $X_0\in \R^3_+$
\end{rmk}
The current section is devoted to the proof of Theorem~\ref{thm:dim:drop}.

We begin with a Lemma on the basic properties of $\B$ and $\Phi$. 
\begin{lem}\label{lem:drop:1}
Let $\epsilon>0$, $q\ge q_0>>1$, $m\in\N$, and let $\B$, $\Phi$, and $S_m$ as in~\eqref{eqn:field} and~\eqref{eqn:potential}. Then, $\Div (\B)=0$, and the following size estimates hold. 
\begin{equation}\label{eqn:size:field}
|\B|(x,y,z)\le \frac{2\epsilon}{z},\quad\forall (x,y,z)\in \R^3_+,
\end{equation}
\begin{equation}\label{eqn:size:field:1}
|\B|(x,y,z)\ge \frac{\epsilon}{10} q^{m} \quad\forall (x,y,z)\in \R^3_+:q^{-m}/2\le z\le 2q^{-m},
\end{equation}
\begin{equation}\label{eqn:size:sum}
\Bigl|\Phi(x,y,z)-\epsilon S_m(x)\Bigr|\le C\epsilon,\quad \forall (x,y,z)\in \R^3_+:q^{-m}\le z<q^{-m+1}, 
\end{equation}
where $C$ is a universal constant.
Finally, for any $(x,y),(x',y')\in\R^2$ we have
\begin{equation}\label{eqn:size:horizontal}
|\Phi(x,y,q^{-m})-\Phi(x',y',q^{-m})|\le 2\epsilon +4\epsilon\log(q^m|(x,y)-(x',y')|).
\end{equation}
\end{lem}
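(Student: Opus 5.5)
Each summand $e^{-q^jz}\cos(q^jx)$ is harmonic in $\R^3_+$, since $\partial_{xx}+\partial_{zz}$ of it is $(-q^{2j}+q^{2j})e^{-q^jz}\cos(q^jx)=0$. Hence $\Div\B=-\Delta\Phi=0$, once we know the series may be differentiated term by term. To justify this, I would note that for $z\ge z_0>0$ the $k$-th derivatives of the $j$-th term are bounded by $q^{jk}e^{-q^jz_0}$, which is summable in $j$. Uniform convergence of all derivatives on $\{z\ge z_0\}$ then gives smoothness and the identity $\Div\B=0$ classically.

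\textbf{Upper bound \eqref{eqn:size:field}.} Each term of $\B$ has modulus exactly $\epsilon q^je^{-q^jz}$, so $|\B|\le\epsilon\sum_j q^je^{-q^jz}$. Using $te^{-t}\le e^{-1}$, I write this as $\frac{\epsilon}{z}\sum_j (q^jz)e^{-q^jz}$. I would split at the index $j_0$ where $q^{j_0}z\approx1$. For $j\le j_0$ the terms form a geometric series dominated by its top term, roughly $e^{-1}(1+q^{-1}+\dots)$. For $j>j_0$ the terms decay super-exponentially. With $q$ large, the total is at most $2$.

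\textbf{Lower bound \eqref{eqn:size:field:1}.} For $q^{-m}/2\le z\le 2q^{-m}$, the $j=m$ term has modulus $\epsilon q^me^{-q^mz}\ge \epsilon q^me^{-2}$. The remaining terms are small by comparison:
\begin{itemize}
\item for $j<m$ they total at most $\epsilon\sum_{j<m}q^j\le 2\epsilon q^{m-1}$;
\item for $j>m$ they total at most $\epsilon\sum_{j>m}q^je^{-q^{j-m}/2}$, which is $\ll \epsilon q^m$ for $q$ large.
\end{itemize}
The triangle inequality then gives a lower bound of $\epsilon q^m(e^{-2}-O(q^{-1}))\ge\epsilon q^m/10$.

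\textbf{Estimate \eqref{eqn:size:sum}.} For $q^{-m}\le z<q^{-m+1}$, I split $\Phi-\epsilon S_m$ into two parts:
\begin{itemize}
\item the low modes $\epsilon\sum_{j<m}(e^{-q^jz}-1)\cos(q^jx)$, bounded by $\epsilon\sum_{j<m}q^jz\le\epsilon\sum_{j<m}q^{j-m+1}\le 2\epsilon$;
\item the tail $\epsilon\sum_{j\ge m}e^{-q^jz}$, bounded by $\epsilon\sum_{j\ge m}e^{-q^{j-m}}\le C\epsilon$.
\end{itemize}

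\textbf{Horizontal estimate \eqref{eqn:size:horizontal}.} This is the one place needing some care, and I expect it to be the main, though mild, obstacle. Set $d=|(x,y)-(x',y')|$, and note $\Phi$ does not depend on $y$. If $q^md\le1$, the right-hand side may be negative, so the inequality cannot be meant literally there. I would read the log as $\log_+$ or as $\log(\max(1,\cdot))$, and handle this regime by the mean value theorem: $|\partial_x\Phi(\cdot,q^{-m})|\le 2\epsilon q^m$ by \eqref{eqn:size:field}, so the difference is at most $2\epsilon q^md\le2\epsilon$.

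If $q^md>1$, let $k\le m$ be the integer with $q^{k-1}< 1/d$, roughly. I would then split the sum into three parts:
\begin{itemize}
\item modes $j<k$: use the Lipschitz bound $\epsilon q^jd$, whose sum is $O(\epsilon)$;
\item modes $k\le j\le m$: bound each difference crudely by $2\epsilon$, giving $2\epsilon(m-k+1)\le 2\epsilon+2\epsilon\log_q(q^md)$;
\item modes $j>m$: the factor $e^{-q^{j-m}}$ makes the total $O(\epsilon)$.
\end{itemize}
Since $\log_q\le\log$ for $q\ge e$, the constants fit within $2\epsilon+4\epsilon\log(q^md)$ once $q_0$ is large. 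The only delicacy is the constant bookkeeping needed to reach exactly $2\epsilon$ and $4\epsilon$; I would absorb the $O(\epsilon q^{-1})$ errors using $q\ge q_0$.
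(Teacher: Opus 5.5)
Your arguments for $\Div\B=0$, for the lower bound \eqref{eqn:size:field:1}, and for \eqref{eqn:size:sum} are the same as the paper's: isolate the $j=m$ term (or the modes $j<m$) and apply the triangle inequality.

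For the upper bound \eqref{eqn:size:field}, the paper does not split at $q^{j_0}z\approx 1$. It uses the integral comparison $q^je^{-q^jz}\le\frac{q}{q-1}\int_{q^{j-1}}^{q^j}e^{-tz}\,dt$, which sums to $\frac{q}{q-1}\cdot\frac1z$ for every $q\ge 2$. Your splitting gives the sharper constant $2e^{-1}+O(q^{-1})$, but it needs $q$ large.

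The real difference is in \eqref{eqn:size:horizontal}. The paper never decomposes into modes. With $h=|(x,y)-(x',y')|\ge q^{-m}$, it integrates $|\nabla\Phi|=|\B|\le 2\epsilon/z$ along the path from $(x,y,q^{-m})$ up to $(x,y,h)$, across to $(x',y',h)$, and down to $(x',y',q^{-m})$.
\begin{itemize}
\item The horizontal leg has length $h$ at height $h$, so it costs at most $2\epsilon$.
\item Each vertical leg costs at most $2\epsilon\int_{q^{-m}}^h dz/z=2\epsilon\log(q^mh)$.
\end{itemize}
The constants $2\epsilon$ and $4\epsilon$ therefore come out exactly, using only the pointwise bound on $\B$. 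Your mode-by-mode splitting uses the lacunary structure and even improves the coefficient of the logarithm to about $2\epsilon/\log q$, at the price of more bookkeeping. You are right that the inequality cannot hold literally for small $q^md$. The paper's proof tacitly restricts to $h\ge q^{-m}$, which is the only regime used in Lemma~\ref{lem:drop:concl}; there the case $|X'-P|\le q^{-m}$ is handled by the gradient bound, as in your mean value step.

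One step in your bookkeeping does not close as written. When $q^md$ is just above $1$, you get $k=m$.
\begin{itemize}
\item The low modes contribute about $\epsilon q^md/(q-1)$.
\item The single middle mode $j=m$ is bounded crudely by $2\epsilon$.
\end{itemize}
The total $2\epsilon+\epsilon/(q-1)+\dots$ then exceeds $2\epsilon+4\epsilon\log(q^md)\approx 2\epsilon$. So there is no room to absorb the $O(\epsilon q^{-1})$ error, however large $q_0$ is.

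The fix is immediate, in either of two ways:
\begin{itemize}
\item At height $q^{-m}$ the $j$th mode has amplitude $e^{-q^{j-m}}$, so the $j=m$ mode costs at most $2\epsilon e^{-1}$, which leaves ample slack.
\item Alternatively, use your mean value bound $2\epsilon q^md$, which is at most $2\epsilon+4\epsilon\log(q^md)$ for $1\le q^md\le 3$, and use the crude splitting only for $q^md\ge 3$.
\end{itemize}
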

\begin{proof}
For each $j\ge 0$ the function $\Phi_j(x, y, z):=e^{-q^jz}\cos(q^jx)$ satisfies $\Delta \Phi_j=0$ in $\R^3$. The partial sums $\sum_{0\le j\le m}\Phi_j$ converge uniformly on compact subsets of $\R^3_+$; therefore, the limit $\Phi$ is harmonic as well, and $\B=-\nabla \Phi$ is well defined, given by~\eqref{eqn:field}, and smooth in $\R^3_+$. In particular, $\Div\B=-\Delta\Phi=0$. Take now $(x,y,z)\in R^3_+$, and notice that, for any $j\ge 0$, 
\[
q^je^{-q^jz}\le q^j\fint_{q^{j-1}}^{q^j}e^{-tz}\,dt=\frac{q}{q-1}\int_{q^{j-1}}^{q^j}e^{-tz}\,dt.
\] 
Therefore, we estimate
\begin{equation}\label{eqn:size:step}
|\B|(x,y,z)\le \epsilon\sum_{j\ge 0}q^je^{-q^jz}\le \frac{q}{q-1}\frac{\epsilon}{z}\le \frac{2\epsilon}{z}.
\end{equation}

Assume now that $q^{-m}/2\le z\le 2q^{-m}$. Then we have, by triangle inequality,
\begin{align*}
|\B(x,y,z)|&\ge \epsilon q^m\Bigl(e^{-2}-\sum_{0\le j\le m-1}q^{j-m}-\sum_{j\ge m+1}q^{j-m}e^{-q^{j-m}/2}\Bigr)\\
&\ge  \epsilon q^m\Bigl(e^{-2}-\frac{1}{q-1}-\sum_{\ell\ge 1}q^\ell e^{-q^\ell /2}\Bigr)\\
&\ge \epsilon q^m/10,
\end{align*}
where the last inequality holds for $q\ge q_0$ sufficiently large.

Let now $q^{-m}\le z<q^{-(m-1)}$. We have, by the triangle inequality, 
\begin{align*}
|\Phi(x,y,z)-\epsilon S_m(z)|&\le \epsilon\sum_{j\ge m}e^{-q^{j-m}}+\epsilon \sum_{0\le j\le m-1}1-e^{-q^{j-m+1}}\\
&\le \epsilon\sum_{j\ge 0}e^{-q^j}+\epsilon\sum_{i=0}^{m-1}q^{-i}\le \epsilon C.
\end{align*}
Finally, let $(x,y),(x',y')\in\R^2$, 	assume that $h=|X-X'|\ge q^{-m}$, and set $X=(x,y,q^{-m})$, $X'=(x',y',q^{-m})$, $Z=(x,y,h)$, $Z'=(x',y',h)$. We estimate $|\Phi(X)-\Phi(X')|$ by using~\eqref{eqn:size:field:1} on the three segments $[X,Z]$, $[Z,Z']$, and $[Z',X']$. The horizontal segment has length equal to its height, so $|\Phi(Z)-\Phi(Z')|\le 2\epsilon |Z-Z'|/h\le 2\epsilon$. On the vertical segment we have logarithmic loss, that is, 
\[
|\Phi(X)-\Phi(Z)|+|\Phi(Z')-\Phi(X')|\le 4\epsilon\int_{q^{-m}}^h\frac 1z \,dz=4\epsilon \log(q^mh)|X-X'|,
\]
completing the proof of~\eqref{eqn:size:horizontal}
\end{proof}
\begin{rmk}
We need $q_0$ large in Theorem~\ref{thm:dim:drop} only to guarantee that $\B$ satisfies the properties~\eqref{eqn:size:field} and~\eqref{eqn:size:field:1}. Looking at the proof of Lemma~\ref{lem:drop:1}, it is clear that the choice $q_0=2$ provides~\eqref{eqn:size:field}. However, for~\eqref{eqn:size:field:1} to hold, we need stronger scale separation. A computation shows that $q_0=32$ is sufficient. 
\end{rmk}
To prove Theorem~\ref{thm:dim:drop}, we will estimate from above the harmonic measure of regions where $\frac{S_m}{m}$ is small. As an intermediate step, we will express the harmonic measure $\omega$ in terms of the harmonic measure relative to the operator in terms of L in the domain $H_m:=\{(x,y,z)\in\R^3_+: z>q^{-m}\}$. The next Lemma allows us to do so. This is a classical result, but we include its proof for the sake of completeness.
\begin{lem}\label{lem:comp:h}
Let $m\in\N$, $X_0=(x_0,y_0,z_0)$ with $z_0>q^{-m}$, and let $\omega_m$ be the harmonic measure relative to the operator $L$ in the domain $H_m=\{(x,y,z)\in\R^3_+: z>q^{-m}\}$. For all $D\subset \R^2$  Borel sets, we have 
\begin{equation}\label{eqn:comp:h}
\omega^{X_0}(D)=\int_{\{z=q^{-m}\}}\omega^P(D)\,d\omega_m(P).
\end{equation}
In particular, for any bounded Borel function $g:\R^2\to \R$ we have 
\begin{equation}\label{eqn:comp:h:1}
\int_{\R^2}g\,d\omega^{X_0}=\int_{\{z=q^{-m}\}}\int_{\R^2}g\,d\omega^P\,d\omega_m(P).
\end{equation}
\end{lem}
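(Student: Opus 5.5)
The plan is to prove \eqref{eqn:comp:h:1} first for continuous compactly supported boundary data, and then pass to Borel sets by a monotone class argument. For Borel $D\subset\R^2$, set $h_D(P):=\omega^P(D)$ for $P\in\R^3_+$. We will show that $h_D$ is a bounded Borel (indeed $L$-harmonic) function on $\R^3_+$. This makes the right-hand side of \eqref{eqn:comp:h} meaningful.

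\textbf{Step 1: continuous compactly supported data.} Fix $g\in C_c(\R^2)$ and let $u_g\in C(\overline{\R^3_+})\cap Y^{1,2}_{\textup{loc}}$ be the solution from Lemma~\ref{lem:hm}, so that $u_g(P)=\int g\,d\omega^P$ and $|u_g|\le\|g\|_\infty$. Restricted to $H_m$, $u_g$ is a bounded solution of $Lu=0$ that is continuous up to $\partial H_m=\{z=q^{-m}\}$. Its trace there is $u_g|_{\{z=q^{-m}\}}$, which is continuous and bounded but not compactly supported.

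The drift restricted to $H_m$ is smooth and bounded, and it satisfies $\Div\B=0$. It also satisfies form-boundedness on $H_m$: we have $|\B|\le 2\epsilon/z\le 4\epsilon/\dist(\cdot,\partial H_m)$ when $z\le 2q^{-m}$, and $|\B|\le 2\epsilon/z$ elsewhere. Hardy's inequality on the half space then gives this. So Lemma~\ref{lem:hm} applies in $H_m$, and $\omega_m$ is well defined.

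We claim that
\[
u_g(X_0)=\int_{\{z=q^{-m}\}}u_g\,d\omega_m^{X_0}.
\]
To prove it, truncate the boundary data. Take $\chi_R\in C_c$ with $0\le\chi_R\le1$ and $\chi_R=1$ on the disk of radius $R$. Let $v_R$ be the $H_m$-solution with data $\chi_R u_g$. By the maximum principle (Lemma~\ref{lem:max:princ}), applied in $H_m\cap B(0,\rho)$ to $u_g-v_R\pm(\ldots)$, we can compare $u_g-v_R$ with $\|g\|_\infty$ times the $H_m$-solution $w_R$ with data $1-\chi_R$. The key fact is
\[
w_R(X_0)=1-\omega_m^{X_0}(\text{disk})\to0 \quad\text{as } R\to\infty,
\]
since $\omega_m^{X_0}$ is a probability measure.

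This comparison on an unbounded domain is the main obstacle. The solutions are not in $Y^{1,2}(H_m)$ globally, so Lemma~\ref{lem:max:princ} does not apply directly. I would handle it as follows.
\begin{itemize}
\item Use the continuous-data version of Lemma~\ref{lem:hm}, i.e.\ the uniform limits of Lipschitz-data solutions.
\item Note that, by linearity and the representation formula, the difference $u_g-v_R-(\text{$H_m$-solution with data }(1-\chi_R)u_g)$ is a bounded solution in $H_m$ with zero boundary values.
\item Show this bounded solution vanishes by a Liouville-type argument. Apply \eqref{eqn:höld} in large half-balls $D(x,r)$ with $r\to\infty$; for zero data this gives $\osc_{D(x,s)}\le C(s/r)^\alpha\cdot 2\sup|\cdot|\to0$.
\end{itemize}
Hence $u_g=\int u_g\,d\omega_m$, as required.

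\textbf{Step 2: extension to bounded Borel data.} The class of bounded Borel $g$ for which $P\mapsto\int g\,d\omega^P$ is Borel and \eqref{eqn:comp:h:1} holds contains $C_c(\R^2)$. It is closed under bounded monotone limits, by monotone convergence applied twice (inner and outer integrals). By the monotone class theorem it therefore contains all bounded Borel functions. Taking $g=\mathbf 1_D$ gives \eqref{eqn:comp:h}.
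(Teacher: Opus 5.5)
Your argument is correct in outline and has the same skeleton as the paper's proof: restrict a global solution with nice data to $H_m$, represent it through $\omega_m^{X_0}$, then pass to indicators. It treats the two delicate points differently, and more carefully.

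The paper reduces to $D$ bounded and open and takes smooth $f_i\to\mathds{1}_D$. It notes that $u_{f_i}\in Y^{1,2}(\R^3_+)$ restricts to the unique $Y^{1,2}(H_m)$-solution with its own trace on $\{z=q^{-m}\}$. It then asserts $u_{f_i}(X_0)=\int u_{f_i}\,d\omega_m^{X_0}$ ``by the definition of $\omega_m$''. That step applies the representation formula \eqref{eqn:rep:h} to boundary data that are not compactly supported, which Lemma~\ref{lem:hm} does not literally cover. Your truncation by $\chi_R$, together with $\omega_m^{X_0}(\partial H_m)=1$, is precisely what justifies it.

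Likewise, your monotone class step replaces the paper's approximation of $D$ by bounded open sets. That approximation tacitly has to be good simultaneously for $\omega^{X_0}$ and for $\omega_m^{X_0}$-almost every $\omega^P$. The monotone class theorem yields this, and the Borel measurability of $P\mapsto\omega^P(D)$, at no cost. Note that constants lie in your class, since both sides are probability measures. The paper's version is shorter; yours makes explicit what it leaves implicit.

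There are some things to tighten.

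\textbf{The second bullet.} The ``$H_m$-solution with data $(1-\chi_R)u_g$'' is not supplied by Lemma~\ref{lem:hm}, because those data are not compactly supported. Building it needs the very approximation you are trying to avoid, and once you had it the limit $R\to\infty$ would be superfluous. Stay with your first idea instead:
\begin{itemize}
\item Let $\tilde v_R$ be the $H_m$-solution with data $\chi_R$ and set $w_R:=1-\tilde v_R\ge0$. Then $\|g\|_\infty w_R\mp(u_g-v_R)$ are solutions with nonnegative values on $\partial H_m$.
\item Taking $g$ Lipschitz, $u_g$ and $v_R$ tend to $0$ at infinity by \eqref{eqn:hold:c} and \eqref{eqn:sup:int}, as in \eqref{eqn:zero:at:infty}.
\item Apply Lemma~\ref{lem:max:princ} on the half-balls $H_m\cap B(0,\rho)$ after adding $\sup_{\partial B(0,\rho)\cap H_m}|u_g-v_R|$. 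Letting $\rho\to\infty$ gives the comparison.
\end{itemize}
This is the device the paper itself uses in the proof of Lemma~\ref{lem:drop:key}. If you keep the Liouville route through Theorem~\ref{thm:h}, check that the function lies in $W^{1,2}(D(x,5r))$ up to $\partial H_m$.

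\textbf{The identity for $w_R$.} $w_R(X_0)=1-\int\chi_R\,d\omega_m^{X_0}$ is only bounded above by $1-\omega_m^{X_0}$ of the disk, not equal to it. The inequality is all you need.

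\textbf{Form-boundedness on $H_m$.} This is inherited from $\R^3_+$ because $C^\infty_c(H_m)\subset C^\infty_c(\R^3_+)$, so no Hardy argument is required.
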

\begin{proof}
A straightforward consequence of the definition of the harmonic measure is that $P\to\omega^P(D)$ is continuous in $\Omega$, as can be seen by approximating $\omega$ by solutions. Therefore, the integral on the right-hand side of~\eqref{eqn:comp:h} is well defined. Up to approximating $D$ with open bounded sets, we can also assume that $D$ is bounded and open. For $i\in\N$, let $f_i\in C^\infty_c(\R^2)$ be such that $f_i=1$ in $\{(x,y)\in\R^2:\dist((x,y),\R^2\setminus D)>1/i\}$ and $f_i=0$ outside of $D$. Let $u_{f_i}$ be the unique solution, given by Lemma~\ref{lem:existence}, to $Lu=0$ in $\R^3_+$ with $\Tr u=f_i$ on $\R^2$. Then $u_{f_i}$ is the unique solution to $Lu=0$ in $H_m$ with $\Tr u_{f_i}=u_{f_i}|_{\{z=q^{-m}\}}$ on $\partial H_m$, and by the definition of $\omega_m$ we have
\[
u_{f_i}(X_0)=\int_{\{z=q^{-m}\}}u_{f_i}(P)\,d\omega^{X_0}_m.
\]
Taking the limit as $i\to\infty$ yields~\eqref{eqn:comp:h}.
\end{proof}
We use $\omega_m$ because, in the strip $\{q^{-m}\le z<q^{-m+1}\}$, we can isolate more effectively the effect that the first $m$ scales of the potential $-\Phi$ have on the behavior of the harmonic measure. To do so, we will use the auxiliary function
\begin{equation}\label{eqn:barrier}
v:=e^{-\Phi/2}.
\end{equation}
Having in mind the connection between $\Phi$ and $S_m$ established in~\eqref{eqn:size:sum}, fixing $X_0\in\R^3_+$ and $\gamma>0$ we can estimate, for $m> C/\gamma$, 
\begin{equation}\label{eqn:drop:base}
\omega_m^{X_0}\Bigl(\{S_m\le \gamma m\}\times\R\times{q^{-m}}\Bigr)\le e^{\epsilon\gamma m} \int_{\R^2}v(x,y,q^{-m}) \,d\omega^{X_0}_m(x,y),
\end{equation}
where the integral is well defined since $\Phi$, and hence $v$, is uniformly bounded on $H_m$.
Our first aim is to prove that the $L$-harmonic extension of $v$ above height $q^{-m}$ is exponentially small in $ m$.
Set 
\begin{equation}\label{eqn:eqn:cut:m}
V_{m}(X):=\int_{\{z=q^{-m}\}}v(x,y,q^{-m})\,d\omega^X_m.
\end{equation}
 The advantage of using $v$ is that such function is $L$-superharmonic, with 
\begin{equation}\label{eqn:Lv}
L v=\frac{v|\nabla\Phi|^2}{4}=\frac{v|\B|^2}4.
\end{equation}
Since at height close to $q^{-m}$, $v$ is bounded from above and below by positive constants, the size estimate~\eqref{eqn:size:field:1} and~\eqref{eqn:Lv} suggest using the maximum principle to compare $v-V_m$ and a quadratic polynomial. 
We will make this idea precise in the proof of the following lemma, obtaining the desired exponential decay.
\begin{lem}\label{lem:drop:key}
There exists $\rho\in (0,1)$, depending only on $\epsilon$, such that the following holds.
Let $m\ge 1$, $\gamma>0$, and $X_0=(x_0,y_0,q^{-(m-1)})$. Let $C$ be the constant in~\eqref{eqn:size:sum}. Then,
\begin{equation}\label{eqn:exp:decr}
V_{m}(X_0)\le \rho v(X_0).
\end{equation}
Moreover, if $X_0=(x_0,y_0,z_0)$ with $z_0\ge 2$,
\begin{equation}\label{eqn:exp:decr:1}
V_{ m}(X_0)\le \rho^{m} v(X_0),
\end{equation}
and, if $m>C/\gamma$,
\begin{equation}\label{eqn:exp:decr:2}
\omega_m^{X_0}\Bigl(\{S_m\le \gamma m\}\times\R\times\{q^{-m}\}\Bigr)\le (e^{\epsilon\gamma}\rho)^mv(X_0).
\end{equation}
\end{lem}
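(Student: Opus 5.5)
The plan is to compare $v=e^{-\Phi/2}$ with its $L$-harmonic extension $V_m$ from $\{z=q^{-m}\}$, and to show that the gap $w:=v-V_m$ is a fixed fraction of $v$ one scale higher.

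\textbf{Step 1: $w$ is nonnegative.} Since $v$ is smooth and bounded on $\overline{H_m}$, and by \eqref{eqn:Lv} $Lv=v|\B|^2/4\ge0$, the function $v$ is a supersolution in $H_m$. The function $V_m$ is the solution in $H_m$ with the same boundary values on $\partial H_m=\{z=q^{-m}\}$. Hence $w:=v-V_m$ is a supersolution in $H_m$ with zero trace. I would get $w\ge0$ from the maximum principle (Lemma~\ref{lem:max:princ}, which applies in $H_m$ since \eqref{eqn:divfree} holds). Proving \eqref{eqn:exp:decr} is then equivalent to
\[
w(X_0)\ge(1-\rho)\,v(X_0)\qquad\text{at height } q^{-(m-1)}.
\]
By \eqref{eqn:size:sum} and \eqref{eqn:size:horizontal}, $\Phi$ oscillates by at most $C\epsilon(1+\log q)$ on the box
\[
Q:=\{|(x,y)-(x_0,y_0)|<q^{-m}\}\times(q^{-m},q^{-m+1}].
\]
Therefore $v\approx v(X_0)$ on $Q$, with constants depending only on $\epsilon$ and $q$. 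Since $q$ is fixed, I allow $\rho$ to depend on $q$ as well.

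\textbf{Step 2 (main obstacle): a quadratic lower barrier.} On the thin cylinder $Q':=\{|(x,y)-(x_0,y_0)|<q^{-m}\}\times(q^{-m},2q^{-m})$, estimate \eqref{eqn:size:field:1} gives
\[
Lw=\frac{v|\B|^2}{4}\ge c\,\epsilon^2q^{2m}\,v(X_0).
\]
I would take the barrier
\[
h:=\kappa\,v(X_0)\,q^{2m}\Bigl[(z-q^{-m})(2q^{-m}-z)-\bigl|(x,y)-(x_0,y_0)\bigr|^2\Bigr].
\]
It satisfies $h\le0$ on $\partial Q'$. Moreover $|\Delta h|\lesssim\kappa v(X_0)q^{2m}$, and by \eqref{eqn:size:field},
\[
|\B\cdot\nabla h|\lesssim \frac{\epsilon}{z}\,\kappa v(X_0)q^{2m}q^{-m}\lesssim\epsilon\kappa v(X_0)q^{2m}.
\]
Choosing $\kappa\ll\epsilon^2$ gives $Lh\le Lw$ in $Q'$. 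The maximum principle in $Q'$ then yields $w\ge h$, so $w\ge c\,\kappa\, v(X_0)$ at the point $(x_0,y_0,\tfrac32q^{-m})$, and, shrinking the box slightly, on a small ball around it.

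The delicate points are exactly these. The drift term must be dominated by the $\epsilon^2q^{2m}$ source, which works because $\epsilon^2\gg\epsilon\kappa$ once $\kappa$ is small. The oscillation of $v$ across $Q'$ must be controlled using \eqref{eqn:size:horizontal}.

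\textbf{Step 3: lifting to height $q^{-(m-1)}$.} I would transport the lower bound on $w$ from height $\tfrac32 q^{-m}$ to $X_0$ by the weak Harnack inequality (Lemma~\ref{lem:w:h}) applied to the nonnegative supersolution $w$. I would use a chain of $O(\log q)$ Whitney balls going vertically above $(x_0,y_0)$, which is permitted because all the constants in Lemma~\ref{lem:w:h} are scale invariant. This gives $w(X_0)\ge c(\epsilon,q)\,v(X_0)$, which is \eqref{eqn:exp:decr} with $\rho=1-c$.

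\textbf{Step 4: iteration and \eqref{eqn:exp:decr:2}.} For \eqref{eqn:exp:decr:1}, I would argue as in Lemma~\ref{lem:comp:h}, using the semigroup identity
\[
V_m(X_0)=\int_{\{z=q^{-(m-1)}\}}V_m(P)\,d\omega^{X_0}_{m-1}(P).
\]
Applying \eqref{eqn:exp:decr} at each $P$ gives $V_m(X_0)\le\rho V_{m-1}(X_0)$. Inducting down to $V_0(X_0)\le v(X_0)$ gives the bound, and the base case $V_0\le v$ follows from $w\ge0$. Finally, \eqref{eqn:exp:decr:2} follows by combining \eqref{eqn:exp:decr:1} with \eqref{eqn:drop:base}, since $\int v\,d\omega^{X_0}_m=V_m(X_0)$.
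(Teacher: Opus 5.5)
You do not prove the lemma as stated, because your $\rho$ depends on $q$. Apart from a technical point in Step~1 (see below), your argument is sound for a fixed $q$. But the lemma asserts that $\rho$ depends only on $\epsilon$, and you explicitly let it depend on $q$.

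This is not cosmetic, and Step~3 cannot avoid it. Step~2 gives $w\ge c\kappa v(X_0)$ only on a ball of radius $\sim q^{-m}$ at height $\tfrac32 q^{-m}$. Transporting this to height $q^{-(m-1)}$ through $\sim\log q$ Whitney balls costs a fixed factor per ball. You therefore end with only $1-\rho\gtrsim c(\epsilon)\,q^{-C}$. The loss comes from working from such a small patch: already for $\B=0$, the best lower bound at $X_0$ obtainable from $w\ge a$ on that ball is of order $q^{-2}a$. Lemma~\ref{lem:drop:concl} takes $\gamma=-\log\rho/(8\epsilon)$, so your $\gamma$ would depend on $q$. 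Theorem~\ref{thm:dim:drop}, however, fixes $\gamma$ before $q\ge q_0$. The qualitative dimension drop for each fixed $q$ would survive, but the lemma as stated does not follow.

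The missing observation is that you never need to go down to the bottom layer. The ball $B(X_0,q^{-(m-1)}/4)$ lies in $\{q^{-(m-1)}/2\le z\le 2q^{-(m-1)}\}$. There, \eqref{eqn:size:field:1} with $m-1$ in place of $m$ gives $|\B|\ge \epsilon q^{m-1}/10$, hence $Lv\ge c\,\epsilon^2 q^{2m-2}v$. Also, \eqref{eqn:size:field} gives $v\approx v(X_0)$ on that ball with constants depending only on $\epsilon$.

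The paper therefore centers the quadratic barrier $b\,v(X_0)\bigl(1-16q^{2m-2}|X-X_0|^2\bigr)$ at $X_0$ itself. The drift term contributes only $O(\epsilon)$ relative to the Laplacian part, exactly as in your Step~2. So for a suitable $b=b(\epsilon)>0$, the function $V_m-v+b\,v(X_0)\bigl(1-16q^{2m-2}|X-X_0|^2\bigr)$ is a subsolution that is nonpositive on the boundary of the ball. The maximum principle then gives $V_m(X_0)\le(1-b)\,v(X_0)$ with no Harnack chain, so $\rho=1-b$ depends only on $\epsilon$.

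Alternatively, you could keep your framework and run Step~2 at every center of a horizontal disc of radius $4q^{-(m-1)}$. On that disc $v\approx v(X_0)$ uniformly in $q$ by \eqref{eqn:size:sum}. You would then conclude with Lemma~\ref{lem:bourgain} in $\{z>\tfrac32 q^{-m}\}$.

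Separately, in Step~1 you cannot invoke Lemma~\ref{lem:max:princ} directly. The functions $v$ and $v-V_m$ do not decay at infinity, so they are not in $Y^{1,2}(H_m)$. The paper handles this as follows:
\begin{itemize}
\item it truncates the boundary datum to $\eta_k v$;
\item it uses \eqref{eqn:zero:at:infty} to get $V_m^{(k)}\le v$ outside a large ball;
\item it applies the maximum principle in the bounded domain $B(X_0,R)\cap H_m$;
\item it lets $k\to\infty$ at the end.
\end{itemize}
You need the same device, both for $w\ge0$ and for any comparison in unbounded domains.
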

\begin{proof}
Fix $X_0$ as in the statement. 
For each $k\in\N$ set, for $X\in H_{m+1}$, 
\[
V_{{m}}^{(k)}(X)=\int_{\{z=q^{-m}\}}\eta_{k}v(x,y,q^{-m})\,d\omega^X_{m},
\]
where $0\le \eta_k\le 1$ is a smooth function supported in $B(0,2k)$, with $\eta_k=1$ in $B(0,k)$. In such a way, we have $V_{m}^{(k)}\in Y^{1,2}(H_{m})\cap C(\overline{H_{m}})$, $V_{m}^{(k)}|_{\{z=q^{-m}\}}=\eta_kv(\cdot,\cdot,q^{-m})$, and $L V^{(k)}_{m}=0$ in $H_{m}$. 
Since the boundary data $\eta_kv(\cdot,\cdot,q^{-m})$ is compactly supported, by~\eqref{eqn:hold:c} and~\eqref{eqn:sup:int} applied with $p=6$, we get 
\begin{equation}\label{eqn:zero:at:infty}
\lim_{\substack{|X|\to\infty\\ X\in \overline{H_{m}}}} V^{(k)}_{ m}(X)=0.
\end{equation}
Moreover, from the definition of $\Phi$ (see~\eqref{eqn:potential}), it is clear that $v$ is bounded from above and below in $H_{m}$. More explicitly, setting $A_{m}=\sum\limits_{j\ge -m}e^{-q^{j}}$, we have 
\begin{equation}\label{eqn:potential:bounds}
e^{-\epsilon A_{m}/2} \le v (X)\le e^{\epsilon A_{m}/2},\quad X\in \overline{H_{m}}.
\end{equation}
Because of~\eqref{eqn:zero:at:infty}, there is $R>1$ such that $V^{(k)}_{m}(X)\le e^{-\epsilon A_{m}/2}(X)$ for any $X\in\overline{H_{m}}: |X-X_0|\ge R$. Therefore, by~\eqref{eqn:potential:bounds} and by the choice of $\eta_k$ we have $V^{(k)}_{ m} - v\le 0$ on $\partial \Bigl(B(X_0,R)\cap H_{m}\Bigr)$. Since, by~\eqref{eqn:Lv}, $L (V^{(k)}_{ m} - v)=-L v\le 0$, maximum principle, applied in the domain $B(X_0,R)\cap H_{m}$, gives 
\begin{equation}\label{eqn:step:main:lem}
V^{(k)}_{m}(X)\le v(X),\quad\forall X\in B(X_0,R)\cap H_m.
\end{equation}
Our aim is to improve this inequality at $X_0=(x_0,y_0,q^{-(m-1)})$ by using the maximum principle again. We look for a function $w\in Y^{1,2}_0(B(X_0,q^{-(m-1)}/4))$ with $w(X_0)=v (X_0)$ and $L (V^{(k)}_{ m} - v+bw)\le 0$ in $B(X_0,q^{-(m-1)}/4)$ for some $0<b<1$. Being this the case, the maximum principle, together with~\eqref{eqn:step:main:lem}, would prove~\eqref{eqn:exp:decr} with $\rho=(1-b)$ and $V^{(k)}_{ m}$ in place of $V_{ m}$. We set $w$ to be the polynomial
\[
w(X)=v(X_0)(1-16q^{2m-2}|X-X_0|^2),\quad X\in B(X_0,q^{-(m-1)}/4).
\]
Thanks to our previous discussion, we only need to check that there exists some $0<b<1$ such that $V^{(k)}_{ m} - v+bw$ is a subsolution in $B(X_0,q^{-(m-1)}/4)$. Since $V^{(k)}_{ m}$ is a solution, this amounts to checking the sign of $L (bw-v)$. By~\eqref{eqn:Lv} and~\eqref{eqn:size:field:1}, we have, for any $X\in B(X_0,q^{-(m-1)}/4)$ 
\begin{equation}\label{eqn:eqn:step:main:lem:1}
L v(X)\ge \frac{v(X)|\B|^2(X)}4\ge v(X)\frac{\epsilon^2}{400}q^{2m-2}.
\end{equation}
Moreover, using~\eqref{eqn:size:field}, for any such $X$ we have
\[
\frac{v(X)}{v(X_0)}=e^{-(\Phi(X)-\Phi(X_0))}\ge \inf\limits_{Y\in B(X_0,q^{-(m-1)}/4)}e^{-|\B(Y)||X-X_0|}\ge e^{-\frac{2\epsilon}{3}}.
\]
Together with~\eqref{eqn:eqn:step:main:lem:1}, this yields, for $X\in B(X_0,q^{-(m-1)}/4)$,
\begin{align*}
L (bw-v)(X)&\le bL w(X)- c_0q^{2m} v(X_0)\\
&=32bq^{-2}\Bigl(q^{2m}-\B(X)\cdot (X-X_0)\Bigr)v(X_0)-c_0q^{2m} v(X_0)\\
&\le v(X_0)\Bigl(32b q^{-2}(q^{2m}+2\epsilon/3)-c_0q^{2m}\Bigr),
\end{align*}
where we set $c_0=\frac{\epsilon^2}{400}q^{-2}e^{-\frac{2\epsilon}{3}}$.
Therefore, choosing $b=\frac{q^2c_0}{32}= \frac{\epsilon^2}{400}e^{-\frac{2\epsilon}{3}}$, we get 
\[
L (V^{(k)}_{ m}+bw-v)\le 0,
\]
that proves
\[
V^{(k)}_{ m}(X_0)\le \rho v(X_0).
\]
Sending $k\to\infty$ yields~\eqref{eqn:exp:decr}.

Assume now that $X_0=(x_0,y_0,z_0)$ with $z_0\ge 2$. For $0< j\le m$ we have, by Lemma~\ref{lem:comp:h},
\[
V_{j}(X_0)=\int_{\{z=q^{j-1}\}}V_{j}(x,y,q^{-(j-1)})\,d\omega^{X_0}_{j-1}.
\]
Applying~\eqref{eqn:exp:decr} at height $q^{-(j-1)}$ we have
\[
V_{j}(X_0)\le\rho \int_{\{z=q^{j-1}\}}v(x,y,q^{-(j-1)})\,d\omega^{X_0}_{j-1}=\rho V_{j-1}(X_0).
\]
An iteration of this inequality proves~\eqref{eqn:exp:decr:1}. Finally, ~\eqref{eqn:exp:decr:2} is a consequence of~\eqref{eqn:drop:base} and~\eqref{eqn:exp:decr:1}.
\end{proof}
We are now ready to prove~\eqref{eq:mainconclusion}.

\begin{lem}\label{lem:drop:concl}
Let $X_0=(x_0,y_0,z_0)$ be a point with $z_0\ge 2$. Let $\gamma=-\log(\rho)/(8\epsilon)$, where $\rho\in (0,1)$ is as in the statement of Lemma~\ref{lem:drop:key}. Then, for $m>>1$ be sufficiently large. 
Then,
\[
 \omega^{X_0}(\{S_m\le\gamma m\}\times\R)\le \rho^{\frac m 2}v(X_0)+C e^{-\alpha\frac \gamma 8 m},
\]
where $\alpha\in (0,1)$ is the boundary De Giorgi-Nash-Moser exponent as in Corollary~\ref{cor:carl}, and $C$ is a constant dependent on $\epsilon$ but not on $m$. In particular, 
\[
\omega^{X_0}(F)=1,
\]
where $F$ is as in Theorem~\ref{thm:dim:drop}.
\end{lem}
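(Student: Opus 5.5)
Write $E_m:=\{S_m\le\gamma m\}\subset\R$. The plan is to split $\omega^{X_0}(E_m\times\R)$ by means of Lemma~\ref{lem:comp:h} at height $q^{-m'}$ for some $m'\ge m$, chosen proportional to $m$. The quantity \eqref{eqn:exp:decr:2} controls the set where $S_{m'}$ is small. The obstacle is that at height $q^{-m'}$ we see the boundary trace of $\omega^P$ only through points $P$ sitting above $\partial\R^3_+$. So we must transfer information from the level $z=q^{-m'}$ down to the boundary and control the error in passing from "$P=(x,y,q^{-m'})$ with $S_{m}(x)$ small" to "$\omega^P(E_m\times\R)$".

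\textbf{Step 1.} Write
\[
\omega^{X_0}(E_m\times\R)=\int_{\{z=q^{-m'}\}}\omega^P(E_m\times\R)\,d\omega^{X_0}_{m'}(P).
\]
Split the plane $\{z=q^{-m'}\}$ into $G:=\{S_{m}(x)\le 2\gamma m\}$ and its complement. On $G$, bound $\omega^P\le1$ and use \eqref{eqn:exp:decr:2} with $\gamma$ replaced by $2\gamma$ (and $m'=m$, or a comparable index). With $\gamma=-\log\rho/(8\epsilon)$ we get $e^{2\epsilon\gamma}\rho=\rho^{3/4}\le\rho^{1/2}$, so this contribution is at most $\rho^{m/2}v(X_0)$. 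Here one should take $m'=m$ and work with $S_m$ directly.

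\textbf{Step 2.} On the complement, $P=(x,y,q^{-m})$ with $S_m(x)>2\gamma m$. Since $|S_m'|\le\sum_{j<m}q^j\le 2q^{m-1}$, we have $S_m>\gamma m$ on the interval $\{|x'-x|\le \gamma m q^{1-m}/2\}$. Hence $E_m\times\R$ avoids the surface ball $\Delta(p,r)$, where $p=(x,y,0)$ and $r=c\gamma m\,q^{-m+1}$. The function $u(Q)=\omega^Q(E_m\times\R)$ is a nonnegative solution with zero trace on $\Delta(p,r)$ and is bounded by $1$. By \eqref{eqn:hold:c},
\[
u(P)\le C\Bigl(\frac{q^{-m}}{r}\Bigr)^\alpha\le C\Bigl(\frac{1}{\gamma m q}\Bigr)^\alpha .
\]
This is only polynomial in $m$, not the stated $e^{-\alpha\gamma m/8}$. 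To get exponential smallness I would instead place the splitting level higher, at $z=q^{-m}e^{-\gamma m/8}$, i.e.\ an intermediate height between scales. Alternatively, I would use that in the complement region $S_m$ exceeds the threshold by $\gamma m$, and iterate the oscillation estimate \eqref{eqn:osc} across about $\gamma m/8$ dyadic annuli, each contributing a factor $e^{-\alpha}$. This is the main obstacle: matching the dyadic number of scales between the height of $P$ and the radius of the good ball with $\gamma m/8$ logarithmic steps. I would implement it by choosing the evaluation height $q^{-m}e^{-\gamma m/8}$ via Lemma~\ref{lem:comp:h} in a domain $\{z>h\}$, and using \eqref{eqn:size:sum} together with \eqref{eqn:size:horizontal} to keep the potential comparison (hence \eqref{eqn:drop:base}) valid up to an additive $C\epsilon+\epsilon\gamma m/2$ loss. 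That loss still leaves $e^{\epsilon(\ldots)}\rho^m\le\rho^{m/2}$ by the choice of $\gamma$.

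\textbf{Step 3.} Summing the two contributions gives the displayed bound. Since it is summable in $m$, Borel--Cantelli yields $\omega^{X_0}(\limsup_m\{S_m\le\gamma m\}\times\R)=0$. Hence $\liminf S_m/m\ge\gamma$ holds $\omega^{X_0}$-a.e., so $\omega^{X_0}(F)=1$ (after replacing $\gamma$ by a slightly smaller constant if needed).
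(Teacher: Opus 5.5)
Your final route works and is genuinely different from the paper's, but your diagnosis of the obstacle is off. The paper stays on $\{z=q^{-m}\}$ and splits with the larger threshold $4\gamma m$, so Lemma~\ref{lem:drop:key} gives exactly $(e^{4\epsilon\gamma}\rho)^m v(X_0)=\rho^{m/2}v(X_0)$ on that part. On the complement it uses the logarithmic modulus \eqref{eqn:size:horizontal}. There $\Phi(P)\ge 3\epsilon\gamma m$, while $\Phi\le 2\epsilon\gamma m$ on that plane above $\{S_m\le\gamma m\}$. Hence $\epsilon\gamma m\le 2\epsilon+4\epsilon\log(q^m|X'-P|)$, which forces a separation larger than $e^{\gamma m/8}q^{-m}$, and \eqref{eqn:hold:c} finishes. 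So the merely polynomial bound you found is not intrinsic to height $q^{-m}$. It is an artifact of the crude bound $|S_m'|\le 2q^{m-1}$, which ignores that $|S_m(x)-S_m(x')|\le\sum_{j<m}\min(2,q^j|x-x'|)\le C+2\log_q(q^m|x-x'|)$. For the same reason, iterating \eqref{eqn:osc} at height $q^{-m}$ with your separation of order $\gamma m q^{1-m}$ gains only $O(\log m)$ scales.

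Your fix is to split on the plane $z=h:=q^{-m}e^{-\gamma m/8}$, which is lower than $q^{-m}$, not higher. This does give the lemma: the crude separation $\gamma m q^{1-m}/2$ and \eqref{eqn:hold:c} yield $C(h/r)^\alpha\le Ce^{-\alpha\gamma m/8}$ on $\{S_m>2\gamma m\}$. You must, however, supply two steps.

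First, the potential comparison at height $h$ comes from integrating \eqref{eqn:size:field} vertically, not from \eqref{eqn:size:horizontal}. This gives $|\Phi(x,y,h)-\Phi(x,y,q^{-m})|\le 2\epsilon\log(q^{-m}/h)=\epsilon\gamma m/4$, which together with \eqref{eqn:size:sum} costs an extra factor $e^{\epsilon\gamma m/8}$ in \eqref{eqn:drop:base}.

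Second, Lemma~\ref{lem:drop:key} only controls extensions of $v$ from the lattice planes $\{z=q^{-m}\}$. You therefore need $\int v(\cdot,h)\,d\omega^{X_0}_{\{z>h\}}\le V_m(X_0)\le\rho^m v(X_0)$. This follows from Lemma~\ref{lem:comp:h} for the nested domains $\{z>h\}\supset H_m$, together with the bound $\int v(\cdot,h)\,d\omega^P_{\{z>h\}}\le v(P)$ for $P\in\{z=q^{-m}\}$, proved exactly as \eqref{eqn:step:main:lem}. The good-set term is then at most $(e^{2\epsilon\gamma+\epsilon\gamma/8}\rho)^m v(X_0)=\rho^{47m/64}v(X_0)\le\rho^{m/2}v(X_0)$.

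In short, the paper pays for a finer horizontal estimate in order to stay on lattice planes, where Lemma~\ref{lem:drop:key} applies verbatim. You need only the trivial Lipschitz bound on $S_m$, but must transport the decay of $V_m$ to an off-lattice plane. Your Borel--Cantelli step is fine, and no shrinking of $\gamma$ is needed, since $\{\liminf_m S_m/m<\gamma\}\subset\limsup_m\{S_m\le\gamma m\}$.
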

\begin{proof}
Let $m>>1$ and set $F^\gamma_m:=\{S_m\le\gamma m\}\times\R\times\{q^{-m}\}\subset\{z=q^{-m}\}$. By~\eqref{eqn:comp:h} we can decompose 
\begin{align}\label{eqn:dec}
 \omega^{X_0}(\{S_m\le\gamma m\}\times\R)&=\int_{\{z=q^{-m}\}}\omega^P(\{S_m\le\gamma m\}\times\R)\,d\omega_m(P)\\ \notag
&=\int_{F_{m}^{4\gamma}}\omega^P(\{S_m\le\gamma m\}\times\R)\,d\omega_m(P)\\\notag&+\int_{\{z=q^{-m}\}\setminus{F_{m}^{4\gamma}}}\omega^P(\{S_m\le\gamma m\}\times\R)\,d\omega_m(P)\\ \notag
&\le \omega_m(F_{m}^{4\gamma})+\int_{\{z=q^{-m}\}\setminus{F_{m}^{4\gamma}}}\omega^P(\{S_m\le\gamma m\}\times\R)\,d\omega_m(P).
\end{align}
Since $\gamma=-\log(\rho)/(8\epsilon)$, Lemma~\ref{lem:drop:key} yields
\begin{equation}\label{eqn:drop:fin}
\omega_m(F_{m}^{4\gamma})\le \rho^{\frac {m} 2}v(X_0).
\end{equation}
We now need to estimate the second term on the right-hand side of~\eqref{eqn:dec}.
For any $P\in \{z=q^{-m}\}\setminus F_{m}^{4\gamma}$, we have $P=(x_P,y_P,q^{-m})$with $S_m(x_P)>4\gamma m$. Hence, by~\eqref{eqn:size:sum} and for $m$ sufficiently large, 
\begin{equation}\label{eqn:drop:fin:1}
\Phi(P)\ge \epsilon4\gamma m - C\epsilon\ge \epsilon3\gamma m, \quad P\in \{z=q^{-m}\}\setminus F^{4\gamma}_m.
\end{equation}
We want to show that $\dist(P,\{S_m\le\gamma m\}\times\R)>>q^{-m}$. Let $(x',y')\in\R^2$ with $S_m(x')\le \gamma m$, and hence, setting $X'=(x',y',q^{-m})$, 
\begin{equation}\label{eqn:drop:fin:2}
\Phi(X')\le \epsilon\gamma m+C\epsilon\le 2\epsilon\gamma m,\quad X'\in F^\gamma_m.
\end{equation}
Therefore, if $|X'-P|\le q^{-m}$, by~\eqref{eqn:drop:fin:1}, ~\eqref{eqn:drop:fin:2}, and ~\eqref{eqn:size:field:1}, we have
\[
\epsilon\gamma m\le\Phi(P)-\Phi(X)\le 2\epsilon,
\]
that is a contradiction. Hence, $|X'-P|> q^{-m}$, and so, using~\eqref{eqn:size:horizontal} in place of~\eqref{eqn:size:field:1}, we have
\[
\epsilon\gamma m\le\Phi(P)-\Phi(X)\le 2\epsilon+4\epsilon\log (q^m|X'-P|).
\]
Thus, setting $\pi(P):=(x_P,y_P)$, if $m$ is sufficiently large we get
\begin{equation}\label{eqn:drop:fin:3}
\dist(P,\{S_m\le\gamma m\}\times\R)\ge\dist(\pi(P),\{S_m\le\gamma m\}\times\R) > e^{\frac \gamma 8m}q^{-m}.
\end{equation}
Set $R_m=e^{\frac \gamma 8 m}q^{-m}$, and, for $i\in\N$, take $\le f_i\in C^\infty_c(\R^2\setminus \overline{B(0,R_m)})$ with $0\le f_i\le 1$ and $f_i=1$ in $B(0,2^iR_m)\setminus B(0,(1+2^{-i})R_m)$. By~\eqref{eqn:hold:c} we have, for $m$ sufficiently large, 
\[
\int_{\R^2}f_i\,d\omega^P\le C\left(\frac{q^{-m}}{R_m}\right)^\alpha\le C e^{-\alpha\frac \gamma 8 m}.
\] 
Taking the limit as $i\to\infty$ and using~\eqref{eqn:drop:fin:3} we get
\begin{equation}\label{eqn:drop:fin:4}
\omega^P(\{S_m\le\gamma m\}\times\R)\le\omega^P(\R^2\setminus \overline{B(0,R_m)})\le C e^{-\alpha\frac \gamma 8 m},
\end{equation}
where $C$ is a constant dependent on $\epsilon$, but uniform in $m$ and in $P\in \{z=q^{-m}\}\setminus F_{m}^{4\gamma}$. 
Therefore, plugging~\eqref{eqn:drop:fin:4} and ~\eqref{eqn:drop:fin} into ~\eqref{eqn:dec}, we obtain
\begin{equation}\label{eqn:drop:fin:5}
\omega^{X_0}(\{S_m\le\gamma m\}\times\R)\le \rho^{\frac m 2}v(X_0)+C e^{-\alpha\frac \gamma 8 m}.
\end{equation}
Notice that 
\[
\R^2\setminus F=\left\{\liminf\limits_{m\to\infty}\frac{S_m}{m}< \gamma\right\}\times\R=\bigcup\limits_{k\in\N}\bigcap\limits_{j\in\N}\bigcup\limits_{m\ge j}\{S_m\le \gamma m(1-2^{-k})\}\times\R.
\]
By~\eqref{eqn:drop:fin:5} we have 
\[
\omega^{X_0}\left(\bigcap\limits_{j\in\N}\bigcup\limits_{m\ge j}\{S_m\le \gamma m\}\times\R\right)\le\lim_{j\to\infty}\sum_{m\ge j}\rho^{\frac m 2}v(X_0)+C e^{-\alpha\frac \gamma 8 m}=0.
\]
Therefore, $\omega^{X_0}(\R^2\setminus F)=0$ as well.
\end{proof}

To complete the proof of Theorem~\ref{thm:dim:drop}, we may now prove that the Hausdorff dimension of $F$ is strictly smaller than $2$. For a proof of the next Lemma on lacunary sums, see~\cite[Lemma 1]{Azu67}.
\begin{lem}\label{lem:deviation}
For every $t>0$ and integer $m\ge0$,
\begin{equation}\label{eq:exponential}
 \frac1{2\pi}\int_0^{2\pi}e^{tS_m(x)}\, dx\le e^{mt^2/2},
 \qquad S_0=0.
\end{equation}
Consequently, for every $t>0$,
\begin{equation}\label{eq:largedeviation}
 |\{x\in[0,2\pi):S_m(x)\ge tm\}|
       \le2\pi e^{-t^2m/2}.
\end{equation}
\end{lem}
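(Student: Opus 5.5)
The plan is to prove the exponential moment bound \eqref{eq:exponential} by induction on $m$, exploiting the lacunarity of the frequencies $q^j$, and then to deduce \eqref{eq:largedeviation} by Chebyshev's inequality. (The statement presumably intends an integer ratio $q\ge 2$; I use that $q\in\N$ is large, as in Theorem~\ref{thm:dim:drop}.) The Chebyshev step is immediate: for $t>0$,
\[
|\{x\in[0,2\pi):S_m(x)\ge tm\}|\le e^{-t^2m}\int_0^{2\pi}e^{tS_m}\,dx\le 2\pi e^{-t^2m}e^{mt^2/2}=2\pi e^{-t^2m/2},
\]
where \eqref{eq:exponential} is applied with parameter $t$ itself. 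So the whole content is \eqref{eq:exponential}.

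For \eqref{eq:exponential}, the natural route is the classical Azuma-type argument. One could try a martingale structure by conditioning on the top frequency. Write $S_{m}(x)=S_{m-1}(x)+\cos(q^{m-1}x)$; since only a bound is needed, it is simpler to peel off the lowest frequency instead. Substituting $y=qx$ gives
\[
S_m(x)=\cos x+\sum_{j=1}^{m-1}\cos(q^jx)=\cos x+S_{m-1}(qx).
\]
Because $q$ is an integer, $x\mapsto qx$ preserves normalized Lebesgue measure on $\R/2\pi\mathbb Z$. The difficulty is that $\cos x$ and $S_{m-1}(qx)$ are not independent.

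A cleaner approach uses the elementary inequality $e^{t\cos\theta}\le \cosh t+\sinh t\,\cos\theta$, which follows from convexity of $s\mapsto e^{ts}$ on $[-1,1]$. Together with $\cosh t\le e^{t^2/2}$ this gives
\[
e^{tS_m(x)}=\prod_{j=0}^{m-1}e^{t\cos(q^jx)}\le\prod_{j=0}^{m-1}\bigl(\cosh t+\sinh t\cos(q^jx)\bigr).
\]
I would expand the product into a sum of terms $\cosh^{m-|J|}t\,\sinh^{|J|}t\prod_{j\in J}\cos(q^jx)$ over subsets $J$, and express each product of cosines as an average of $\cos\bigl(\sum_{j\in J}\pm q^jx\bigr)$. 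The key claim is that for $q\ge3$ and $J\neq\emptyset$, the frequency $\sum_{j\in J}\pm q^j$ never vanishes. Indeed, the largest term $q^{\max J}$ exceeds $\sum_{j<\max J}q^j=(q^{\max J}-1)/(q-1)$. Hence every nonempty-$J$ term integrates to zero over $[0,2\pi)$, which is exactly where the integrality of $q$ is used. Only the $J=\emptyset$ term survives, giving
\[
\frac1{2\pi}\int_0^{2\pi}e^{tS_m}\le\cosh^m t\le e^{mt^2/2}.
\]

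The main obstacle, and the only genuinely nontrivial step, is this nonvanishing-frequency claim. It requires $q\ge3$ (for $q=2$, $2^1-2^0-2^0=0$). It holds under the standing assumption $q\ge q_0$ large, and the case $m=0$ is trivial.
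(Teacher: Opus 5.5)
Your proof is correct and is essentially the argument behind the paper's citation of Azuma's Lemma 1. That argument runs: the convexity bound $e^{t\cos\theta}\le\cosh t+\sinh t\cos\theta$, expansion of the product, vanishing of every cross term (the multiplicative-system property, which you verify through the nonvanishing of $\sum_{j\in J}\pm q^j$), then $\cosh t\le e^{t^2/2}$ and Chebyshev. One small correction: $J$ is a set, so indices cannot repeat and $2^1-2^0-2^0$ is not an admissible frequency; your own bound $q^{\max J}>(q^{\max J}-1)/(q-1)$ therefore already covers every integer $q\ge 2$, not only $q\ge 3$.
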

We perform now a simple covering argument to end the proof of the dimension drop.
\begin{lem}
Let $q$, $F$, and $E$ be as in Theorem~\ref{thm:dim:drop} and let $\gamma$ be as in Lemma~\ref{lem:drop:concl}. Then,
\begin{equation}\label{eq:dimension}
\dimH E<1-\frac{\gamma^2}{32\log q}.
\end{equation}
In particular,
\[
\dimH F<2-\frac{\gamma^2}{32\log q}
\]
\end{lem}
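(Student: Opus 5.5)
The plan is a standard covering argument by $q$-adic intervals, with the counting supplied by Lemma~\ref{lem:deviation}. Since $S_m$ is $2\pi$-periodic (because $q\in\N$), it suffices to bound the dimension of $E\cap[0,2\pi)$: $E$ is a countable union of translates of this piece, and Hausdorff dimension is countably stable. Likewise $F=E\times\R$, so $\dimH F\le \dimH E+1$, by the standard product bound $\dimH(E\times\R)\le \overline{\dim}_B$-type estimate. Alternatively, the covering of $E$ built below can be multiplied by unit squares in the $y$-direction, which directly gives the statement about $F$.

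Fix $\gamma'<\gamma$, for instance $\gamma'=\gamma/2$. Every $x\in E$ satisfies $S_m(x)\ge \gamma' m$ for all $m\ge j$, for some $j$. Hence
\[
E\cap[0,2\pi)\subset\bigcup_{j}\bigcap_{m\ge j} E_m,\qquad E_m:=\{x\in[0,2\pi):S_m(x)\ge\gamma' m\}.
\]
To cover $E_m$ at scale $q^{-m}$, partition $[0,2\pi)$ into intervals $I$ of length $2\pi q^{-m}$. The key regularity observation is that $|S_m'|\le\sum_{j<m}q^j\le 2q^{m-1}$. So if $I$ meets $E_m$, then $S_m\ge\gamma' m-4\pi/q\ge\gamma' m/2$ on all of $I$ once $m$ is large. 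Each such $I$ is therefore contained in $\{S_m\ge \gamma' m/2\}$, and by \eqref{eq:largedeviation} the number $N_m$ of these intervals satisfies
\[
N_m\le \frac{2\pi e^{-(\gamma'/2)^2m/2}}{2\pi q^{-m}}=q^m e^{-\gamma'^2 m/8}.
\]

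Now compute the $s$-dimensional Hausdorff content. We have $\mathcal H^s_{q^{-m}}(E_m)\lesssim N_m q^{-ms}=\exp\bigl(m[(1-s)\log q-\gamma'^2/8]\bigr)$, which tends to $0$ as $m\to\infty$ whenever $s>1-\gamma'^2/(8\log q)$. Since each set $\bigcap_{m\ge j}E_m$ is contained in $E_m$ for all large $m$, its $\mathcal H^s$ measure vanishes, and countable stability gives $\dimH E\le 1-\gamma'^2/(8\log q)$. Choosing $\gamma'=\gamma/2$ (or any $\gamma'>\gamma/\sqrt 2$) yields $1-\gamma^2/(32\log q)$, with strict inequality if we take $\gamma'$ slightly larger than $\gamma/2$. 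For $F$, cover $(E\cap[0,2\pi))\times[k,k+1]$ by $N_m q^m$ squares of side $q^{-m}$. The content is then $\exp\bigl(m[(2-s)\log q-\gamma'^2/8]\bigr)$, which gives the bound $2-\gamma'^2/(8\log q)$ in the same way.

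The main obstacle is the passage from the measure bound on superlevel sets to a count of intervals. This is the Lipschitz step above: it requires checking that the loss $4\pi/q$ is negligible relative to $\gamma' m$, which holds for large $m$. A smaller point is making sure the constants land on the stated strict inequality rather than a non-strict one; this is handled by taking $\gamma'\in(\gamma/2,\gamma)$.
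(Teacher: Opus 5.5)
Your proposal is correct and follows the paper's argument step for step: you partition $[0,2\pi)$ into $q^m$ intervals of length $2\pi q^{-m}$, use the bound on $|S_m'|$ to upgrade one point of $\{S_m\ge\gamma' m\}$ to a whole interval inside a slightly lower superlevel set, count those intervals with \eqref{eq:largedeviation}, and conclude by periodicity and countable stability (the paper takes exactly $\gamma'=\gamma/2$ and $t=\gamma/4$). Your choice $\gamma'\in(\gamma/2,\gamma)$ is what actually delivers the strict inequality as stated, since the paper's fixed choice only shows $\mathcal H^s(E)=0$ for $s>1-\gamma^2/(32\log q)$, i.e.\ the non-strict bound; your explicit treatment of $F$ also supplies the ``in particular'' step, which the paper leaves unproved.
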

\begin{proof}
For $m\ge 1$, partition $[0,2\pi)$ into the $q^m$ closed intervals $I_{i}=[\frac{2\pi i}{q^m},\frac{2\pi(i+1)}{q^m})$, $i=0,\dots,q^{m}-1$, having length
$\ell_m=2\pi q^{-m}$. Since
\[
 |S_m'(x)|\le\sum_{j=0}^{m-1}q^j
              =\frac{q^m-1}{q-1},
\]
the oscillation of $S_m$ on any such interval is at most
\begin{equation}\label{eq:oscillationconstant}
 D_0=\frac{2\pi}{q-1}.
\end{equation}
Therefore, for $m\ge4D_0/\gamma$ and $0\le i\le q^{m-1}$, if
$I_i\cap\{S_m\ge\gamma m/2\}\ne\emptyset$, every point $x\in I_i$ satisfies
$S_m(x)\ge\frac{\gamma m}{2}-D_0\ge\gamma m/4$.
Let $N_m$ be the number of such intervals $I_i$. Applying
\eqref{eq:largedeviation} with $t=\gamma/4$ gives
\[
 N_m\ell_m\le \left| \{S_m\ge\frac\gamma 4 m\}\cap[0,2\pi)\right|\le  2\pi e^{-\frac{\gamma^2m}{32}},
\]
and hence 
\begin{equation}\label{eqn:cover}
 N_m\le q^me^{-\frac{\gamma^2m}{32}}.
\end{equation}
For each integer $N\ge1$, set
\[
 E_N=\bigcap_{m\ge N}\left\{S_m\ge\frac{\gamma m}{2}\right\}\cap[0,2\pi).
\]
The definition of $E$ implies
$E\cap[0,2\pi)\subset\bigcup_NE_N$.
For any fixed $N$ and arbitrarily large $m\ge N$, the $N_m$
selected partition intervals cover $E_N$.

To prove $\mathcal{H}^s(E)=0$ it suffices to find covers of
$E$ with diameters tending uniformly to zero and sums of the $s$th
powers of those diameters tending to zero. By~\eqref{eqn:cover}, our covers
satisfy
\[
 N_m\ell_m^s
 \le q^{m(1-s)}e^{-\frac{\gamma^2m}{32}}(2\pi)^s=(2\pi)^s\exp\left(m\left[(1-s)\log q
                                      -\frac{\gamma^2}{32}\right]\right)
 \longrightarrow0,
\]
whenever $s>1-\frac{\gamma^2}{32\log q}$.
Thus, for such $s$, $\mathcal{H}^s(E_N)=0$ for every $N$, that implies $\mathcal{H}^s(E\cap[0,2\pi])=0$.
All the functions $S_m$ have period $2\pi$, so $E$ has this period as well. In particular, $\mathcal{H}^s(E)=0$ for
every $s>1-\frac{\gamma^2}{32\log q}$, which is \eqref{eq:dimension}.
\end{proof}
\begin{rmk}
Notice that, thanks to~\cite{HofL01}, the vector field $\B$ constructed in Theorem~\ref{thm:dim:drop} cannot satisfy the Carleson measure condition~\eqref{eqn:carl:cond}. This can be easily seen by a direct computation. Indeed, let $B$ be the ball of radius $1$ centered at $0$. Notice that, by~\eqref{eqn:size:field:1}, we have 
\[
\int_{B\cap\R^3_+}|\B|^2(z)\,z\,dz\ge C^{-1}\sum_{m\ge 10}q^{2m}\int_{q^{-m}/2}^{2q^{-m}}z\,dz=\infty.
\]
\end{rmk}
\begin{rmk}
Let $\Phi$ be as in~\ref{thm:dim:drop}, and let $0<\epsilon<\frac{1}{2}$. We claim that $w=v^2=e^{-\Phi}$ is an $A_2$-weight. We need to check that
\begin{equation}\label{eqn:A2}
\sup_{Q}\fint_{Q\cap\R^3_+} w\fint_{Q\cap\R^3_+}w^{-1}=\sup_Q\fint_{Q\cap\R^3_+} \fint_{Q\cap\R^3_+}e^{\Phi(X)-\Phi(Y)}\,dX\,dY\le C<\infty,
\end{equation}
where the supremum is taken over cubes centered in $\overline{\R^3_+}$. Let $Q$ be such a cube with side length $\ell$. Assume  that $\ell\ge \dist(Q,\R^2)$. Then, for any $X, Y\in Q$, by~\eqref{eqn:size:field} we have 
\[
 e^{|\Phi(X)-\Phi(Y)|}\le e^{2\sqrt{3}\epsilon},
\]
proving~\eqref{eqn:A2} for such cubes $Q$. Assume instead that $\dist(Q,\R^2)<\ell$. Let $Q=I\times J$, where $I$ is a square and $J$ is an interval. Let $X_I$ be the center of $I$ and let $X\in Q$. Let $X_Q=(X_I,2\ell)$ We estimate from above $\Phi(X)-\Phi(X_Q)$ using the same argument as in the proof of~\eqref{eqn:size:horizontal}. That is, let $\Bar X$ be the point having height $2\ell$ and the same $(x,y) $- coordinate as $X$. Going vertically from $X$ to $\Bar X$ and then horizontally to $X_Q$, using~\eqref{eqn:size:field} we get
\[
|\Phi(X)-\Phi(X_Q)|\le 2\epsilon\int_{\delta(X)}^{2\ell}\frac{1}{z}\,dz+\sqrt{2}\epsilon=2\epsilon\log (2\ell/\delta(X))+\sqrt{2}\epsilon.
\]
Therefore, taking the exponential, writing $X=(x,y,z)$, using that $\epsilon<\frac{1}{2}$ and integrating we get
\[
\fint_Q \fint_Qe^{\Phi(X-\Phi(Y))}\,dX\,dY\le C+ \ell^{-2} \left(\int_{0}^{2\ell}\left(\frac{2\ell}{z}\right)^{2\epsilon}\,dz\right)^2\le C_\epsilon,
\]
completing the proof of~\eqref{eqn:A2}. Therefore, if we rewrite the operator $L=-\Delta+\B\cdot\nabla$ as 
\[
L=-w^{-1}\Div(w\nabla),
\]
we find that $L$ is a divergence form operator weighted with an $A_2$-weight. Theory of such operators was developed in~\cite{FabJK82B}. From the dimension drop of Theorem~\ref{thm:dim:drop}, we can therefore deduce a dimension drop for the harmonic measure of a divergence form operator weighted with an $A_2$-weight.
\end{rmk}
\def\cprime{'}
\bibliographystyle{amsalpha}
\bibliography{bibli}
\end{document}